\newtheorem{theorem}{Theorem}
\newtheorem{proposition}[theorem]{Proposition}
\newtheorem{lemma}[theorem]{Lemma}
\newtheorem{corollary}[theorem]{Corollary}
\newtheorem{claim}[theorem]{Claim}
\newtheorem{qtheorem}{Koszmider's Theorem}
\theoremstyle{definition}
\newtheorem{definition}[theorem]{Definition}
\newcommand{\al}{\alpha}
\newcommand{\be}{\beta}
\newcommand{\ga}{\gamma}
\newcommand{\omg}{\omega}
\newcommand{\ka}{\kappa}
\newcommand{\la}{\lambda}
\newcommand{\ccal}{{\mathcal C}}
\newcommand{\ical}{{\mathcal I}}
\newcommand{\pcal}{{\mathcal P}}
\newcommand{\setm}{\setminus}
\newcommand{\empt}{\emptyset}
\newcommand{\subs}{\subset}
\newcommand{\tip}{\operatorname{tp}}
\def\<{\left\langle}
\def\>{\right\rangle}
\def\cf{\operatorname{cf}}
\def\br#1;#2;{\bigl[ {#1} \bigr]^ {#2} }
\newif\ifdeveloping
\newcommand{\EE}{\operatorname{E}}
\newcommand{\nnn}{\operatorname{n}}
\newcommand{\iii}{\operatorname{i}}
\newcommand{\II}{\operatorname{I}}
\newcommand{\concat}{\mathop{{}^{\frown}\makebox[-3pt]{}}}
\newcommand{\un}{\bigcup}
\newcommand{\mc}[1]{\mathcal{#1}}
\newcommand{\mbb}[1]{\mathbb{#1}}
\newcommand{\mb}[1]{\mathbf{#1}}
\newcommand{\mf}[1]{\mathfrak{#1}}
\newcommand{\preq}{\preceq_q}
\newcommand{\ip}{\iii_p}
\newcommand{\iq}{\iii_q}
\title[Superatomic Boolean Algebras]{
Superatomic Boolean algebras constructed from strongly unbounded
functions }
\author[J.C. Martinez]{Juan Carlos Mart\'{\i}nez}
\address{Facultat de Matem\`atiques \\ Universitat de Barcelona \\ Gran
  Via 585 \\ 08007 Barcelona, Spain}
\email{jcmartinez@ub.edu}
\thanks{The first author was supported by
the Spanish Ministry of Education DGI grant MTM2008-01545 and by
the Catalan DURSI grant 2009SGR00187. }
\author[L. Soukup]{Lajos Soukup }
\address{Alfr{\'e}d R{\'e}nyi Institute of Mathematics, Hungarian
  Academy of Sciences}
\email{soukup@renyi.hu}
\thanks{The second author was partially supported by
Hungarian National Foundation for Scientific Research grants no.
61600 and 68262}
\date{\today}
\subjclass[2000]{03E35, 06E05, 54A25, 54G12} \keywords{Boolean
algebra, superatomic, cardinal sequence, consistency result,
locally compact scattered space, strongly unbounded function}
\begin{document}


\begin{abstract} Using Koszmider's strongly unbounded
functions, we show the following consistency result:

Suppose that $\ka,\la$ are infinite cardinals such that $\ka^{+++}
\leq \la$, $\ka^{<\ka}=\ka$  and $2^{\ka}= \ka^+$,
and $\eta$ is an ordinal with $\ka^+\leq \eta < \ka^{++}$ and
$\mbox{ cf}(\eta) = \ka^+$. Then, in some cardinal-preserving
generic extension there is
 a superatomic Boolean algebra
$\mbb B$ such that $\mbox{ht}(\mbb B) = \eta + 1$,
$\mbox{wd}_{\al}(\mbb B) = \ka$
 for every $\al < \eta$ and $\mbox{wd}_{\eta}(\mbb B) = \la$
(i.e.  there is a locally compact scattered space with cardinal
sequence $\<\kappa\>_\eta\concat \<\lambda\>$).

Especially, $\<{\omega}\>_{{\omega}_1}\concat \<{\omega}_3\>$ and
$\<{\omega}_1\>_{{\omega}_2}\concat \<{\omega}_4\>$ can be
cardinal sequences of  superatomic Boolean algebras.
\end{abstract}

\maketitle

\section{Introduction}
A Boolean algebra $\mbb B$ is {\em superatomic} iff every
homomorphic image of $\mbb B$ is atomic. Under Stone duality,
homomorphic images of a Boolean algebra $\mbb A$ correspond to
closed subspaces of its Stone  space $S(\mbb A)$, and atoms of
$\mbb A$  correspond to isolated points of $S(\mbb A)$. Thus $\mbb
B$ is superatomic iff its dual space $S(\mbb B)$ is {\em
scattered}, i.e. every non-empty (closed) subspace has some
isolate point.

For every Boolean algebra $\mbb A$, let $\mc I(\mbb A)$ be the
ideal
 generated by the atoms of  $\mbb A$.
Define, by induction on ${\alpha}$, the {\em ${\alpha}^{th}$
Cantor-Bendixson ideal $\mc J_{\alpha}(\mbb A)$}, and the {\em
${\alpha}^{th}$ Cantor-Bendixson derivative  $\mbb
A^{({\alpha})}$}  of $\mbb A$ as follows. If $\mc J_{\alpha}(\mbb
A)$ has been defined, put $\mbb A^{({\alpha})}=\mbb A/\mc
J_{\alpha}(\mbb A)$ and let  ${\pi}_{\alpha}:\mbb A\to \mbb
A^{({\alpha)}}$  be the canonical map. Define $\mc J_0(\mbb
A)=\{0_{\mbb A}\}$, $\mc J_{{\alpha}+1}(\mbb
A)={\pi}_{\alpha}^{-1}[\mc I(\mbb A^{{(\alpha})})]$, and for
${\alpha}$ limit $\mc J_{\alpha}(\mbb A)=\bigcup\{\mc
J_{\alpha'}(\mbb A):{\alpha'}<{\alpha}\}$. It is easy to see that
the sequence of the ideals $\mc J_{\alpha}(\mbb A)$ is increasing.
And it is a well-known fact that a non-trivial Boolean algebra
$\mbb A$ is superatomic iff there is an ordinal $\al$ such that
$\mbb A=\mc J_{\alpha}(\mbb A)$ (see \cite[Proposition 17.8]{Ko}).

 Assume that $\mbb B$ is a superatomic Boolean algebra.
The {\em height} of $\mbb B$, $ht(\mbb B)$, is the
 least ordinal
${\delta}$ such that $\mbb B=\mc J_{\delta}(\mbb B)$. This ordinal
$\delta$ is always a successor ordinal. Then, we define the {\em
reduced height} of $B$, $ht^{-}(\mbb B)$, as the least ordinal
${\delta}$ such that $\mbb B=\mc J_{\delta + 1}(\mbb B)$. It is
well-known that if $ht^{-}(\mbb B) = \delta$, then $\mc J_{\delta
+ 1}(\mbb B)\setminus \mc J_{\delta}(\mbb B)$ is a finite set. For
each ${\alpha}<ht^{-}(\mbb
 B)$ let $wd_{\alpha}(\mbb B)=|\mc J_{{\alpha}+1}(\mbb B)\setminus \mc J_{\alpha}(\mbb B)|$,
the number
 of atoms in $\mbb B/\mc J_{\alpha}(\mbb B)$.
The {\em cardinal sequence} of $\mbb B$, $CS(\mbb B)$, is the
sequence $\<wd_{\alpha}(\mbb B):{\alpha}<ht^-(\mbb B)\>$.

Let us turn now our attention from Boolean algebras to topological
spaces for a moment. Given a scattered space $X$, define, by
induction on ${\alpha}$, the {\em ${\alpha}^{th}$ Cantor-Bendixson
derivative  $X^{\alpha}$} of $X$ as follows: $X^0=X$,
$X^{\alpha}=\bigcap_{{\beta}<{\alpha}}X^{\beta}$ for limit
${\alpha}$, and $X^{\alpha+1}=X^{\alpha}\setm I(X^{\alpha})$,
where $I(Y)$ denotes the set of  isolated points of a space $Y$.
The set $I_{\alpha}(X)=X^{\alpha}\setm X^{{\alpha}+1}$ is the {\em
${\alpha}^{th}$ Cantor-Bendixson level} of $X$. The {\em reduced
height} of $X$, $ht^-(X)$, is the least ordinal ${\delta}$ such
that $X^{\delta}$ is finite (and so $X^{{\delta}+1}=\empt$). For
${\alpha}<ht^-(X)$ let $wd_{\alpha}(X)=|I_{\alpha}(X)|$. The {\em
cardinal sequence} of $X$, $CS(X)$, is defined as
$\<wd_{\alpha}(X):{\alpha}<ht^-(X)\>$.

It is well-known that if  $\mbb B$ is a superatomic Boolean
algebra, then the dual space of $\mbb B^{({\alpha})}$ is $(S(\mbb
B))^{({\alpha})}$ (see \cite[Construction 17.7]{Ko}). So
 $ht^-(\mbb B)=ht^-(S(\mbb B))$, and
$wd_{\alpha}(\mbb B)=wd_{\alpha}(S(\mbb B))$ for each $\al <
ht^-(\mbb B)$, that is,
 $\mbb B$ and $S(\mbb B)$
have the same cardinal sequences.

In this paper we consider   the following problem:  given a
sequence $\mb s$ of infinite cardinals, construct  a superatomic
Boolean algebra having $\mb s$ as its cardinal sequence.

For basic facts and results on superatomic Boolean algebras and
cardinal sequences we refer the reader to \cite{Ko} and \cite{R2}.
We shall use the notation $\<\kappa\>_{\alpha}$ to denote the
constant ${\kappa}$-valued sequence of length ${\alpha}$. Let us
denote the concatenation of two sequences $f$  and $g$  by $f
\concat g$. If ${\eta}$ is an ordinal we denote by $\mc C({\eta})$
the family of all cardinal sequences of superatomic Boolean
algebras whose reduced height is ${\eta}$.

If $\ka,\la$ are infinite cardinals and $\eta$ is an ordinal, we
say that a superatomic Boolean algebra $\mbb B$ is a {\em
$(\ka,\eta,\la)$-Boolean algebra} iff {$CS(\mbb
B)=\<\ka\>_\eta\concat \<\la\>$}, i.e.
 if $\mbox{ht}(\mbb B) = \eta + 1$,
$\mbox{wd}_{\al}(\mbb B) = \ka$ for each  $\al < \eta$ and
$\mbox{wd}_{\eta}(\mbb B) = \la$. An
$(\omega,\omega_1,\omega_2)$-Boolean algebra is called a {\em very
thin-thick Boolean algebra}. And, for an infinite cardinal $\ka$,
a $(\ka,\ka^+,\ka^{++})$-Boolean algebra is called a $\ka$-{\em
very thin-thick Boolean algebra}.

By using the combinatorial notion of the {\em new $\Delta$
property (NDP)} of a function, it was proved by Roitman  that the
existence of an $(\omg,\omg_1,\omg_2)$-Boolean algebra  is
consistent with ZFC (see \cite{R1} and \cite{R2}). It is worth to
mention that \cite{R1} was the first paper in which such a special
function was used to guarantee the chain condition of a certain
poset. Roitman's result was generalized in \cite{KM}, where for
every infinite regular cardinal $\ka$, it was proved that the
existence of a $(\ka,\ka^+,\ka^{++})$-Boolean algebra is
consistent with ZFC. Then, our aim here is to prove the following
stronger result.

\begin{theorem}\label{Theorem_1}
 Assume that $\ka,\la$ are
infinite cardinals such that $\ka^{+++} \leq \la$,
$\ka^{<\ka}=\ka$  and $2^{\ka}= \ka^+$. Then for each ordinal
$\eta$  with $\ka^+\leq \eta < \ka^{++}$ and $\cf(\eta) = \ka^+$,
in some  cardinal-preserving generic extension there is a
$(\ka,\eta,\la)$-Boolean algebra, i.e. $\<\ka\>_\eta\concat
\<\la\>\in \ccal(\eta+1)$.
\end{theorem}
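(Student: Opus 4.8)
The plan is to produce the algebra topologically: by the duality recalled above it suffices to force a locally compact scattered (LCS) space $X$ whose Cantor--Bendixson levels satisfy $|I_\alpha(X)| = \kappa$ for all $\alpha < \eta$ and $|I_\eta(X)| = \lambda$, and then let $\mbb B$ be the Boolean algebra of clopen subsets of the one-point compactification $X^\ast$, which is superatomic with $CS(\mbb B)=CS(X)=\<\kappa\>_\eta\concat\<\lambda\>$. I would fix once and for all the underlying set $X = \bigcup_{\alpha \le \eta} L_\alpha$, where the $L_\alpha$ are pairwise disjoint with $|L_\alpha| = \kappa$ for $\alpha < \eta$ and $|L_\eta| = \lambda$, together with the rank function $\rho\colon X \to \eta + 1$ sending $L_\alpha$ to $\alpha$. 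The topology is produced generically, so that each $L_\alpha$ becomes exactly the $\alpha$-th CB level.

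Because the top level has size $\lambda \ge \kappa^{+++}$ while every lower level has size only $\kappa$, the device that lets $\lambda$ many points sit above $\kappa$-sized levels without collapsing those levels is a Koszmider strongly unbounded function. So the first step is to pass, by a preliminary cardinal-preserving forcing (Koszmider's Theorem, invoked below), to a model carrying a strongly unbounded function $f\colon [\lambda]^2 \to \kappa$; this is where the hypotheses $\kappa^{<\kappa}=\kappa$, $2^{\kappa} = \kappa^+$ and $\kappa^{+++}\le\lambda$ are used, to guarantee enough room between $\kappa$ and $\lambda$ for such an $f$. All further forcing takes place over this model. The main forcing $P = P_f$ then consists of small approximations to the topology: a condition is essentially a pair $(a_p, n_p)$ with $a_p \in \br X;{<\kappa};$ and $n_p$ assigning to each $x \in a_p$ a candidate trace, below its own level, of a compact open neighbourhood of $x$, subject to the coherence demands that make the generic family $\{U_x\}$ a base of compact opens of an LCS space in which $\rho$ computes CB rank; conditions are ordered by end-extension of this data. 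The function $f$ enters the clause governing points of the top level $L_\eta$: the separating neighbourhoods assigned to a pair of top points $\beta,\gamma$ are required to live in lower-level regions determined by $f(\beta,\gamma)$, so that large values of $f$ force disjoint separations low down.

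For cardinal preservation I would argue on two sides. First, $P$ is $<\kappa$-closed, since conditions have domains of size $<\kappa$ and, $\kappa$ being regular, coherent unions of $<\kappa$-chains are again conditions; this preserves cardinals $\le\kappa$ and, using $\kappa^{<\kappa}=\kappa$, adds no new small subsets. Second, preservation of cardinals $\ge\kappa^+$ (hence of all cardinals, including $\lambda$ and everything between $\kappa^+$ and $\lambda$) follows from the $\kappa^+$-chain condition. Granting these, straightforward density arguments show the generic space has all of $X$ as points, realizes each $L_\alpha$ as $I_\alpha(X)$ of the correct cardinality, and peels off exactly one level per derivative, so that $CS(X)=\<\kappa\>_\eta\concat\<\lambda\>$ as required.

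The crux, and the step I expect to be the main obstacle, is the $\kappa^+$-cc. Given $\kappa^+$ conditions I would thin to a $\Delta$-system with a common root and a single isomorphism type of ``shape'' (possible since $2^{<\kappa}=\kappa$), observe that the essential non-root differences are concentrated in the top level and so correspond to $<\kappa$-sized sets $s_\xi \subseteq \lambda$, and then apply the strong unboundedness of $f$ to obtain $\xi\ne\xi'$ with $f$ large on every cross pair from $s_\xi\times s_{\xi'}$. Largeness of $f$ is precisely what lets the separating neighbourhoods demanded between the new top points be placed in mutually disjoint low-level regions, so the two conditions amalgamate into a common extension and no $\kappa^+$-antichain can exist. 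Verifying that this amalgamation simultaneously respects every coherence constraint --- keeping each lower level of size $\kappa$ and preventing the top level from collapsing downward --- is the delicate heart of the argument, and the place where the exact definition of ``strongly unbounded'' must be matched to the neighbourhood clauses. Finally, the requirement $\cf(\eta)=\kappa^+$ is used to stratify the limit height as $\eta=\sup_{\xi<\kappa^+}\eta_\xi$ along a cofinal $\kappa^+$-sequence, so that the limit levels cohere inside a single $<\kappa$-closed, $\kappa^+$-cc forcing; together with the level computations this yields $\<\kappa\>_\eta\concat\<\lambda\>\in\ccal(\eta+1)$.
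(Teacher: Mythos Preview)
Your overall architecture matches the paper's: first force a $\kappa^+$-strongly unbounded function via Koszmider (this is where all three cardinal hypotheses are spent), then over that model force the $\langle\kappa\rangle_\eta\concat\langle\lambda\rangle$-structure by a $\kappa$-closed, $\kappa^+$-cc poset of $<\kappa$-sized approximations. The paper works with Bagaria's $\mathbf s$-posets rather than neighbourhood bases directly, but that is cosmetic. One small slip: the strongly unbounded function has codomain $\kappa^+$, not $\kappa$.

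The substantive gap is in your $\kappa^+$-cc sketch, at the line ``observe that the essential non-root differences are concentrated in the top level.'' This is false. Since $\eta\ge\kappa^+$, two conditions in a $\Delta$-system can have their non-root parts supported on entirely disjoint \emph{sets of levels} below $\eta$; no amount of $\Delta$-system or isomorphism-type thinning makes those lower non-root parts disappear. Amalgamating that lower data is precisely the hard step, and the strongly unbounded function is irrelevant to it: $F$ only constrains pairs from $L_\eta$. In the paper this is handled by Mart\'{\i}nez's \emph{method of orbits}: one fixes a cofinal tree of intervals on $\eta$, and the conditions carry a clause (P4) forcing the infimum of any two incomparable compatible points $s,t$ with $\pi(s),\pi(t)<\eta$ to land in the small set $o(\pi(s))\cap o(\pi(t))$. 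The $\kappa^+$-cc proof then (i) pushes each condition down to an isomorphic copy in the top-level-free poset $P_\eta$, (ii) invokes Mart\'{\i}nez's earlier amalgamation result for $P_\eta$ (Proposition~\ref{proposition_23}) to amalgamate the lower parts, (iii) uses $F$ to choose a pair whose top-level pieces are compatible with that amalgam, and (iv) pulls the amalgam back up to $P$. Your sketch has only step (iii). The paper in fact shows (Proposition~\ref{tm:problem} and its corollary) that the simpler approach that works for $\kappa=\omega$ cannot be made to work for $\kappa\ge\omega_1$, so some device like orbits is genuinely needed.
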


\begin{corollary}
The existence of an $({\omega},{\omega}_1,{\omega}_3)$-Boolean
algebra is consistent with ZFC. An
$({\omega}_1,{\omega}_2,{\omega}_4)$-Boolean algebra may also
exist.
\end{corollary}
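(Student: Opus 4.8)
The plan is to pass, via Stone duality, to the equivalent problem of building a locally compact scattered (LCS) space $X$ with cardinal sequence $\<\ka\>_\eta\concat\<\la\>$, and to produce such an $X$ as the generic object of a forcing $P$ tailored from a Koszmider function. The points of $X$ are organized into levels $I_\al$, $\al\le\eta$: a ``spine'' $\bigcup_{\al<\eta}I_\al$ with $|I_\al|=\ka$ sitting below a top level $I_\eta$ of size $\la$, and the topology is generated by assigning to each point a compact open neighborhood whose trace on the lower levels is scattered of the appropriate height. First I would invoke Koszmider's Theorem, whose hypotheses $\ka^{<\ka}=\ka$, $2^\ka=\ka^+$ and $\ka^{+++}\le\la$ are exactly those assumed, to fix in the ground model a strongly unbounded function $f\colon[\la]^2\to\ka$; this $f$ is the combinatorial engine that will later yield the chain condition.

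Next I would define $P$ so that a condition $p$ is a partial approximation of $X$ of size $<\ka$: it names a small set of points, and for each of them a small approximation to its compact open neighborhood, subject to coherence requirements forcing closures to descend only to strictly lower levels. The function $f$ is built into the coherence imposed on the $\la$ top points: the admissible overlap on the spine of the neighborhoods of two top points indexed by $\xi,\xi'<\la$ is bounded in terms of $f(\{\xi,\xi'\})$. Since $\cf(\eta)=\ka^+>\ka$, making a top point attain Cantor--Bendixson rank exactly $\eta$ forces its neighborhood to meet the spine cofinally in $\eta$ while staying compact, so the $\la$ top neighborhoods must be threaded through a spine of only $\ka^+$ points; resolving this tension is precisely the job of $f$.

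For cardinal preservation I would check two things. That $P$ is $<\ka$-closed is routine: the union of an increasing $<\ka$-chain of conditions is again a condition because $\ka^{<\ka}=\ka$, so cardinals and cofinalities $\le\ka$ are preserved. The substantive point is the $\ka^+$-cc. Given $\ka^+$ conditions I would first extract a $\Delta$-system of size $\ka^+$ (legitimate since $\ka^{<\ka}=\ka$ gives $\mu^{<\ka}\le\ka$ for $\mu<\ka$), then, using $2^{<\ka}=\ka$, thin further so that all conditions share the same isomorphism type on the root. Finally I would invoke the strong unboundedness of $f$ to pick two conditions whose new top indices are so $f$-separated that their neighborhood-overlap constraints can be satisfied simultaneously, producing a common extension. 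Matching the exact definition of ``strongly unbounded'' to the catalogue of overlaps that can arise between the spine-traces of distinct top points is the heart of the proof and the main obstacle.

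It then remains to read off the cardinal sequence by density. For each $\al\le\eta$ the conditions adding a new point at level $\al$ with an admissible neighborhood are dense, so a generic filter yields $|I_\al|=\ka$ for $\al<\eta$ and $|I_\eta|=\la$; coherence ensures each closure meets only strictly lower levels, so the $\al$-th Cantor--Bendixson level of the generated space is exactly $I_\al$, and one checks the generated topology is locally compact, Hausdorff and scattered of reduced height $\eta$ --- in particular that each top point genuinely reaches rank $\eta$ and not some smaller limit. This gives $\<\ka\>_\eta\concat\<\la\>\in\ccal(\eta+1)$. The Corollary is then the instances $\ka=\omg,\la=\omg_3,\eta=\omg_1$ and $\ka=\omg_1,\la=\omg_4,\eta=\omg_2$, whose arithmetic hypotheses hold after first forcing GCH.
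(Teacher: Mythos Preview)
Your plan follows the paper's route: the Corollary is simply Theorem~1 specialized to $(\kappa,\eta,\lambda)=(\omega,\omega_1,\omega_3)$ and $(\omega_1,\omega_2,\omega_4)$, and Theorem~1 is proved by first invoking Koszmider's theorem to obtain a $\kappa^+$-strongly unbounded function $F:[\lambda]^2\to\kappa^+$ (note: range $\kappa^+$, not $\kappa$), and then forcing with $\kappa$-closed, $\kappa^+$-cc approximations to an $\mf s$-poset in Bagaria's sense. So at the level of architecture you are on target.

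The gap is in the amalgamation step of the $\kappa^+$-cc proof. A condition in the paper is not merely a small fragment of the order together with neighborhood data: it carries an explicit infimum function $i:[X]^2\to X\cup\{\mathrm{undef}\}$, and the decisive structural requirement is (P4), which pins the level $\pi(i\{s,t\})$ into a prescribed small set --- the \emph{orbit} $o(\pi(s))\cap o(\pi(t))$ when $s,t$ are spine points, and the interval $F\{\xi(s),\xi(t)\}\cap E$ when $s,t$ are top points --- together with an interpolation clause (P5). Your sketch treats only the top--top interaction (the $F$-separation), but says nothing about how to define infima for mixed pairs $s\in X_p\setminus X_q$, $t\in X_q\setminus X_p$ when both lie in the spine; without the orbit constraint there is no reason such infima can be chosen consistently. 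The paper's actual cc argument is correspondingly more elaborate than ``$\Delta$-system plus $F$-separation plus amalgamate'': each condition is first pushed down to an isomorphic copy in the spine-only poset $P_\eta$, the orbit machinery of \cite{M} (packaged as Proposition~23) is invoked to amalgamate there with control on where the new points land, and only then is the result pulled back up using the $F$-separation of the top indices. Your outline captures the last of these three moves but not the first two, and those are where the work is.
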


In order to prove 
Theorem \ref{Theorem_1}, we shall use the main result of \cite{K}.
Assume that $\ka,\la$ are infinite cardinals such that $\ka$ is
regular and $\ka <\la$. We say that a function $F :
[\la]^2\rightarrow \ka^+$ is a $\ka^+$-{\em strongly unbounded
function on} $\la$ iff for every ordinal $\delta < \ka^+$, every
cardinal $\nu < \ka$ and every family $A\subseteq [\la]^{\nu}$ of
pairwise disjoint sets with $|A|=\ka^+$,
 there are different $a,b\in A$
such that $F\{\al,\be\}> \delta$ for every $\al\in a$ and $ \be\in
b$. The following result was proved in  \cite{K}.
\begin{qtheorem}
If $\ka,\la$ are infinite cardinals such that $\ka^{+++} \leq
\la$, $\ka^{<\ka}=\ka$ and $2^{\ka}= \ka^+$, then there is a
$\ka$-closed and cardinal-preserving partial order that forces the
existence of a $\ka^+$- strongly unbounded function on $\la$.
\end{qtheorem}

So, in order to prove Theorem \ref{Theorem_1} it is enough to show
the following result.

\begin{theorem}\label{Theorem_2}
Assume that $\ka,\la$ are infinite cardinals with  $\ka^{+++} \leq
\la$ and $\ka^{<\ka}=\ka$, and  $\eta$ is an ordinal with
$\ka^+\leq \eta < \ka^{++}$ and $\cf(\eta) = \ka^+$. Assume that
there is a $\ka^+$- strongly unbounded function on $\la$. Then,
there is a cardinal-preserving partial order that forces the
existence of a $(\ka,\eta,\la)$-Boolean algebra.
\end{theorem}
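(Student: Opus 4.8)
The plan is to pass to the topological side of the duality recalled in the introduction: by the reformulation recorded in the abstract, it suffices to force a locally compact scattered (LCS) space $X$ with cardinal sequence $\<\ka\>_\eta\concat\<\la\>$. I fix the underlying set once and for all as $X=(\eta\times\ka)\cup(\{\eta\}\times\la)$, declaring the intended Cantor--Bendixson rank of a point to be its first coordinate, so that the stem $T=\eta\times\ka$ carries the $\eta$ thin levels of width $\ka$ (note $|T|=\ka^+$ since $\ka^+\le\eta<\ka^{++}$) and $L=\{\eta\}\times\la$ is the thick top level of width $\la$. Because $\cf(\eta)=\ka^+$, I also fix a continuous increasing sequence $\<\eta_i:i<\ka^+\>$ cofinal in $\eta$; this stratifies the stem into $\ka^+$ consecutive blocks indexed by the range of the strongly unbounded function $F:[\la]^2\to\ka^+$.

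The forcing $P$ consists of approximations of size $<\ka$ to the neighborhood structure of $X$. A condition $p$ specifies a set $x_p\in[X]^{<\ka}$ together with, for each $z\in x_p$, a ``compact open trace'' $U^p_z\subseteq x_p$, subject to the axioms that make $(x_p,\{U^p_z:z\in x_p\})$ a genuine small LCS space compatible with the prescribed rank function: each $U^p_z$ has $z$ as its unique point of maximal rank, the traces cohere downward, and distinct points of equal rank are separated. The extra clause, coupling the combinatorics of $F$ to the geometry, is that for distinct top points $\xi,\xi'\in x_p\cap L$ the traces $U^p_\xi$ and $U^p_{\xi'}$ must diverge above level $\eta_{F\{\xi,\xi'\}}$, i.e. they may share stem points only below that height. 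Ordering is by end-extension. I would first check that $P$ is $\ka$-closed: a decreasing $<\ka$-chain $\<p_j:j<\ga\>$ has the coordinatewise union of the $p_j$ as a lower bound, which has size $<\ka$ by regularity of $\ka$ and still satisfies all axioms and the $F$-clause, the latter being local to pairs.

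The chain condition is where the strongly unbounded function does its work. Given $\ka^+$ conditions I thin them by the $\Delta$-system lemma, which applies since $|\al|^{<\ka}\le\ka^{<\ka}=\ka<\ka^+$ for every $\al<\ka^+$, arranging that the sets $x_p$ form a $\Delta$-system with root $r$ and pairwise isomorphic pictures (using $\ka^{<\ka}=\ka$ to bound the number of types), and that all data attached to the root — finitely many stem heights and the $F$-values among root top points — lie below a single $\delta<\ka^+$. The top points of the varying parts then form a pairwise disjoint family of $\nu$-element subsets of $\la$ with $\nu<\ka$, of size $\ka^+$, so the defining property of $F$ produces two conditions $p,q$ for which every cross value $F\{\xi,\xi'\}$ exceeds $\delta$. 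This forces their new top points to diverge above $\eta_\delta$, hence above the common root material, so $p$ and $q$ have no hidden conflict and admit a common refinement. Thus $P$ is $\ka^+$-cc, and with $\ka$-closedness this makes $P$ cardinal preserving.

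It remains to read off $CS$ of the generic space by density arguments. For each stem point $z$ and each $\al<\rho(z)$ it is dense to enlarge $U_z$ by a new point of rank $\al$, whence every stem point attains its intended rank and each stem level keeps width exactly $\ka$; for a top point $\xi$ the same enlargements, guided by $\<\eta_i\>$, drive the trace of $\xi$ to accumulate at arbitrarily high levels and so place $\xi$ at rank exactly $\eta$, while the divergence clause keeps distinct top points apart, giving the top level width exactly $\la$. The hard part, I expect, is precisely this last bundle of density lemmas: one must arrange the trace of each top point to be a genuinely compact scattered set reaching through all $\eta$ levels over a stem of cofinality $\ka^+$, and simultaneously guarantee that extending a top neighborhood downward never collides with the $F$-imposed divergence requirements — that the geometric demands of local compactness and the combinatorial demands of the chain condition can always be satisfied at once. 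Once these density facts are in place, the generic $X$ is an LCS space with $CS(X)=\<\ka\>_\eta\concat\<\la\>$, which is exactly what is needed.
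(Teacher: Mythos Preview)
Your sketch has a genuine gap, and it is precisely in the step you pass over most quickly: the sentence ``so $p$ and $q$ have no hidden conflict and admit a common refinement'' in the $\ka^+$-cc argument. The strongly unbounded function controls only the interaction between \emph{top} points; it says nothing about how to amalgamate the stem parts of two conditions. When $\eta=\ka^+$ one can arrange (after thinning) that the root levels form an initial segment of every $\pi[x_p]$, and then the union of two isomorphic $\Delta$-system conditions is again a condition --- this is essentially the argument of Section~2. But for $\ka^+<\eta<\ka^{++}$ this fails: new stem points of $p$ and of $q$ may sit at interleaving heights both above and below root points, and simply taking the union does not in general produce a condition (the trace/infimum structure need not be coherent, and local compactness can break). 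Nothing in your definition of $P$ prevents this.

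The paper handles exactly this problem by building extra combinatorial constraints into the conditions --- the orbit requirement (P4)(a) and the isolating-interval requirement (P5) of Section~4 --- and by invoking a nontrivial amalgamation lemma (Proposition~23, extracted from Mart\'{\i}nez's earlier work) which, crucially, \emph{adds new points} $X_r\setm(X_p\cup X_q)$ to the common refinement rather than taking a bare union. Only after this stem amalgamation is secured does the strongly unbounded function enter, to ensure that the added points (whose heights are bounded by a $\gamma<\eta$ depending only on the thinned family) lie below $F\{\xi(s),\xi(t)\}$ for all cross pairs of top points, so that condition (P4)(d) survives. You have also inverted the difficulty: the density lemmas are routine (Lemma~\ref{Lemma_4} in the paper is short), while the paper explicitly calls the $\ka^+$-cc ``the burden of our proof''.
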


In \cite{KM}, \cite{M}, \cite{R1} and in many other papers, the
authors proved the existence of certain superatomic Boolean
algebras in such a way that instead of constructing the algebras
directly, they actually produced certain ``graded posets'' which
guaranteed the existence of the wanted  superatomic Boolean
algebras. From these constructions,  Bagaria, \cite{Ba}, extracted
the following notion  and proved the Lemma \ref{lm:Ba} below which
was implicitly used in many earlier papers.

\begin{definition}[{\cite{Ba}}]\label{df:s-poset}
 Given a sequence $\mf s=\<{\kappa}_{\alpha}:{\alpha}<{\delta}\>$ of
 infinite cardinals, we say that a poset $\<T,\prec\>$ is an $\mf
 s$-{\em poset}
iff the following conditions are satisfied:

\begin{enumerate}[(1)]
\item $T=\bigcup\{T_{\alpha}:{\alpha}<{\delta}\}$ where
$T_{\alpha}=\{{\alpha}\}\times {\kappa}_{\alpha}$ for each $\al
<\delta$. \item For each $s\in T_{\alpha}$ and $t\in T_{\beta}$,
if $s \prec t$
  then ${\alpha}<{\beta}$.

\item For every $\{s,t\}\in \br T;2;$ there is a finite subset
$\iii\{s,t\}$ of $T$ such that for each $u\in T$:

 \begin{equation}\notag
  (u\preceq s\land u\preceq t) \text{ iff }
 \text{$u\preceq v$ for some $v\in \iii\{s,t\}$}.
  \end{equation}

\item For ${\alpha}<{\beta}<{\delta}$, if $t\in T_{\beta}$ then
the set $\{s\in T_{\alpha}:s\prec t\}$ is infinite.
\end{enumerate}
\end{definition}

\begin{lemma}[{\cite[Lemma 1]{Ba}}]\label{lm:Ba}
If there is an $\mf s$-poset then there is a superatomic Boolean
algebra with cardinal sequence $\mf s$.
\end{lemma}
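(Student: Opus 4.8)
The plan is to prove Lemma~\ref{lm:Ba} by constructing the desired superatomic Boolean algebra directly as a subalgebra of the power set algebra $\pcal(T)$, using the combinatorial structure of the $\mf s$-poset to control its Cantor--Bendixson analysis. First I would, for each $t\in T$, form the ``cone below $t$'', namely $\hat t = \{u\in T : u\preceq t\}$, and let $\mbb B$ be the subalgebra of $\pcal(T)$ generated by the family $\{\hat t : t\in T\}$ together with all singletons (or equivalently, the finite subsets of $T$). The key computational engine is condition~(3) of Definition~\ref{df:s-poset}: it guarantees that $\hat s\cap\hat t = \bigcup\{\hat v : v\in\iii\{s,t\}\}$, so that the intersection of two generators is a finite union of generators. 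This is precisely what is needed to verify that the collection of finite unions of cones, modulo finite sets, is closed under the Boolean operations, and hence that $\mbb B$ admits a tractable description: every element of $\mbb B$ is, up to a finite set, a finite union of cones $\hat t$.

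The heart of the argument is to show that the $\alpha^{th}$ Cantor--Bendixson level of $\mbb B$ corresponds exactly to the slice $T_\alpha = \{\alpha\}\times\ka_\alpha$, so that $wd_\alpha(\mbb B) = \ka_\alpha$ and $CS(\mbb B) = \mf s$. I would prove by induction on $\alpha$ that the ideal $\mc J_\alpha(\mbb B)$ coincides (up to the obvious identifications) with the ideal generated by $\{\hat t : t\in T_\beta,\ \beta<\alpha\}$ together with the finite sets. The base step uses that a cone $\hat t$ with $t\in T_0$ is, by conditions~(2) and~(4), an atom-like object: by condition~(2) nothing below the zeroth level lies strictly below $t$ except elements of $T_0$, and one checks the singletons $\{t\}$ for $t\in T_0$ are the atoms of $\mbb B$. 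For the successor step, I would show that modulo $\mc J_\alpha(\mbb B)$ the cone $\hat t$ for $t\in T_\alpha$ becomes an atom of the quotient $\mbb B/\mc J_\alpha(\mbb B) = \mbb B^{(\alpha)}$; condition~(4), which forces each $t$ at level $\beta$ to sit above infinitely many points at every lower level, is what ensures that $\hat t$ is genuinely ``new'' at stage $\alpha$ and is not already killed off in $\mc J_\alpha(\mbb B)$, while condition~(2) prevents elements of higher levels from contributing to level $\alpha$. The limit step is immediate from the definition of $\mc J_\alpha$ as a union. Counting the atoms of $\mbb B^{(\alpha)}$ then yields exactly $|T_\alpha| = \ka_\alpha$ of them, giving the cardinal sequence, and the fact that the induction terminates at $\delta$ shows $\mbb B = \mc J_\delta(\mbb B)$, so $\mbb B$ is superatomic with $ht^-(\mbb B)=\delta$.

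The main obstacle I anticipate is the bookkeeping in the successor step: one must verify carefully that, after quotienting by $\mc J_\alpha(\mbb B)$, two distinct cones $\hat s,\hat t$ with $s,t\in T_\alpha$ become \emph{disjoint} atoms rather than being identified or overlapping. This is where condition~(3) does its real work — the finite set $\iii\{s,t\}$ controlling $\hat s\cap\hat t$ must be shown to lie entirely below level $\alpha$ (using condition~(2), since any $v\in\iii\{s,t\}$ satisfies $\hat v\subseteq\hat s$, forcing $v$ to be at a level $\leq\alpha$, and the case $v$ at level $\alpha$ forces $v\in\{s,t\}$), so that $\hat s\cap\hat t\in\mc J_\alpha(\mbb B)$ and the two images are disjoint and nonzero in the quotient. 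Once this separation is established, identifying the atoms of $\mbb B^{(\alpha)}$ with $T_\alpha$ is routine, and the cardinal-sequence computation follows. I would present the inductive hypothesis in a form strong enough to carry both the ``$\mc J_\alpha$ is generated by lower cones'' claim and the ``cones at level $\alpha$ are atoms mod $\mc J_\alpha$'' claim simultaneously, since the two feed into each other at each stage.
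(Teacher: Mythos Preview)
There is a genuine error in your construction: you should \emph{not} adjoin all singletons (equivalently, all finite subsets of $T$) to the generating family. If you do, then for every $t\in T$ the singleton $\{t\}$ belongs to $\mbb B$ and, being a one-element subset of $T$, is automatically an atom of $\mbb B\subseteq\pcal(T)$. Hence $wd_0(\mbb B)=|T|=\sum_{\alpha<\delta}\kappa_\alpha$, which in general is strictly larger than $\kappa_0$ (for instance, in the paper's main application the sequence is $\langle\kappa\rangle_\eta\concat\langle\lambda\rangle$ with $\lambda\ge\kappa^{+++}$, so $|T|=\lambda\ne\kappa$). Your claim that ``the singletons $\{t\}$ for $t\in T_0$ are the atoms of $\mbb B$'' is therefore false as stated: every singleton is an atom, and the base step of your induction collapses.

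The fix is simply to let $\mbb B$ be the subalgebra of $\pcal(T)$ generated by the cones $\hat t$ \emph{alone}. The level-$0$ singletons are already present, since $\hat t=\{t\}$ for $t\in T_0$; the point is that for $t$ at level $\ge 1$ the singleton $\{t\}$ is \emph{not} in this algebra (one checks, using condition~(4) and the normal form ``finite union of $\hat t\setminus(\hat s_1\cup\dots\cup\hat s_m)$ with each $s_j\prec t$'', that no such expression can isolate a non-minimal point). With this correction your inductive scheme goes through, and the remaining steps you outline (using condition~(3) to see that $\hat s\cap\hat t$ lies in $\mc J_\alpha$ for distinct $s,t\in T_\alpha$, and condition~(4) to see that $\hat t\notin\mc J_\alpha$) are exactly right. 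This corrected argument is the Boolean-algebraic dual of what the paper sketches: the paper builds the locally compact scattered space $X_{\mc T}$ with subbase $\{U_{\mc T}(x),\,T\setminus U_{\mc T}(x):x\in T\}$ and passes to the clopen algebra of its one-point compactification, whereas you would work directly inside $\pcal(T)$; the two routes yield the same algebra via Stone duality.
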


Actually, if  $\mc T=\<T,\prec\>$ is an $\mf s$-poset, we write
$U_{\mc T}(x)=\{y\in T:y\preceq x\}$ for $x\in T$, and  we denote
by $X_{\mc T}$ the topological space on $T$ whose subbase is the
family
\begin{equation}
 \{U_{\mc T}(x), T\setm U_{\mc T}(x):x\in T\},
\end{equation}
then $X_{\mc T}$ is a locally compact, Hausdorff, scattered space
whose  cardinal sequence is $\mf s$, and so the clopen algebra of
the one-point compactification of $X_{\mc T}$ is the required
superatomic Boolean algebra with cardinal sequence $\mf s$.

\vspace{1mm} So,  to prove Theorem \ref{Theorem_2} it will be
enough to show that $\<{\kappa}\>_{\eta}\concat
\<{\lambda}\>$-posets may exist for $\kappa,\eta$ and $\lambda$ as
above.

\vspace{2mm} The organization of this paper is as follows. In
Section 2, we shall prove Theorem \ref{Theorem_2} for the special
case in which $\kappa = \omg$ 
and $\lambda \geq \omega_3$, generalizing in this way the result
proved by Roitman in \cite{R1}. In Section 3, we shall define the
combinatorial notions that make the proof of Theorem
\ref{Theorem_2} work. And in Section 4, we shall present the proof
of Theorem \ref{Theorem_2}.

\section{Generalization of Roitman's Theorem}

\normalsize

In this section, our aim is to prove the following result.

\begin{theorem}\label{Theorem_3} Let $\lambda$ be a cardinal with $\lambda \geq
\omega_3$. Assume that there is an $\omega_1$-strongly unbounded
function on $\la$. Then, in some  cardinal-preserving generic
extension for each ordinal $\eta$ with ${\omega}_1\le
{\eta}<{\omega}_2$ and $\cf({\eta})={\omega}_1$ there is an
$(\omega,{\eta},\la)$-Boolean algebra.
\end{theorem}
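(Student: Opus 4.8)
The plan is to reduce, via Lemma \ref{lm:Ba}, to producing an $\mf s$-poset for $\mf s=\<\omg\>_\eta\concat\<\la\>$, and to obtain one in a ccc generic extension built from the given $\omg_1$-strongly unbounded function $F:\br\la;2;\to\omg_1$. Write $T=\un\{T_\alpha:\alpha\le\eta\}$, where $T_\alpha=\{\alpha\}\times\omg$ for $\alpha<\eta$ and $T_\eta=\{\eta\}\times\la$, and put $L=\un\{T_\alpha:\alpha<\eta\}$. Since $\eta<\omg_2$ the constant sequence $\<\omg\>_\eta$ is realized by an $\mf s$-poset in ZFC, so the whole difficulty lies in attaching the $\la$ top points of $T_\eta$, each above a cofinal height-$\eta$ substructure of $L$, in such a way that clause (3) of Definition \ref{df:s-poset} survives. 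Because $\cf(\eta)=\omg_1$ I first fix a continuous increasing cofinal sequence $\<\eta_i:i<\omg_1\>$ in $\eta$ and let $r(\alpha)=\min\{i:\alpha<\eta_i\}<\omg_1$ be the resulting rank of a lower level $\alpha$; this identification of ``level'' with an ordinal below $\omg_1$ is what couples the geometry of the tower with the range of $F$.

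Next I would define the forcing $P$ whose conditions $p$ are finite approximations $\<a_p,\prec_p\>$ to the desired order, with $a_p\in\br T;{<\omg};$ satisfying the finitary forms of (1)--(3) together with recorded finite witnesses $\iii_p\{s,t\}\subseteq a_p$ for every pair in $a_p$. Extension $q\le p$ means $a_p\subseteq a_q$, $\prec_q$ end-extends $\prec_p$, the sets $\iii_p\{s,t\}$ are never enlarged, and no new common predecessor of $s,t$ is introduced except below some element of $\iii_p\{s,t\}$. The extra bookkeeping that makes $F$ do its work is a \emph{coherence clause}: whenever $\xi\ne\xi'$ index top points $(\eta,\xi),(\eta,\xi')\in a_p$, every common predecessor of those two points sits at a level $\alpha$ with $r(\alpha)<F\{\xi,\xi'\}$. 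Standard density arguments then give, for the generic order $\prec=\un\{\prec_p:p\in G\}$, that (1) and (2) hold by construction, that (4) holds since it is dense to add arbitrarily many new predecessors of any $t$ at any lower level, and that (3) holds because each pair is eventually frozen by a condition in $G$ while extensions are forbidden from producing common predecessors outside the recorded cones; the generic lower part is the required $\<\omg\>_\eta$-structure precisely because $\eta<\omg_2$ keeps the finite lower approximations extendible.

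The heart of the argument, and the step I expect to be the main obstacle, is showing $P$ is ccc. Given a putative uncountable antichain $\{p_i:i<\omg_1\}$, thin it out: by the $\Delta$-system lemma the domains form a $\Delta$-system with finite root $R$, and after further refinement all $p_i$ realize one isomorphism type over $R$; in particular the top-index parts $b_i=\{\xi:(\eta,\xi)\in a_{p_i}\setm R\}$ are pairwise disjoint members of $\br\la;n;$ for a fixed finite $n$. Since $R\cap L$ is a fixed finite set, the ranks $r(\alpha)$ of the finitely many levels of its nodes are bounded by some $\delta<\omg_1$; applying the $\omg_1$-strong unboundedness of $F$ to the family $\{b_i:i<\omg_1\}$ and this $\delta$ yields $i\ne j$ with $F\{\xi,\xi'\}>\delta$ for all $\xi\in b_i$, $\xi'\in b_j$. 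For such $i,j$ the union $p_i\cup p_j$ completes to a single condition: because $a_{p_i}\cap a_{p_j}=R$ and levels strictly increase along $\prec$, the only possible common predecessors of a cross pair $(\eta,\xi),(\eta,\xi')$ lie in $R\cap L$, and each such node has rank $\le\delta<F\{\xi,\xi'\}$, so recording $\iii\{(\eta,\xi),(\eta,\xi')\}$ as the finite set of maximal common members of $R\cap L$ respects the coherence clause. This common extension contradicts incompatibility, so $P$ is ccc, preserves all cardinals, and the generic $\mf s$-poset yields the desired $(\omg,\eta,\la)$-Boolean algebra through Lemma \ref{lm:Ba}. The delicate points to check are that this completion violates none of (1)--(3) -- in particular that the transitive closure of $\prec_{p_i}\cup\prec_{p_j}$ creates no unrecorded common lower bound -- and that the density arguments establishing (3) and (4) stay compatible with the coherence clause.
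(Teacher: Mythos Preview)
Your coherence clause is the right idea (it is the paper's condition (P4)), and your treatment of cross pairs of top points is fine, but the ccc argument has a genuine gap once $\eta>\omega_1$. The failure is not at the top but at pairs $v,v'\in R\cap L$ of \emph{root} elements below level $\eta$: nothing in your refinement forces $\iii_{p_i}\{v,v'\}\subseteq R$. If some maximal common predecessor $w_i$ of $v,v'$ in $p_i$ sits at a non-root level, then by the isomorphism over $R$ the corresponding $w_j\in a_{p_j}\setminus R$ is a maximal common predecessor of $v,v'$ in $p_j$; in the amalgam $w_j$ is a new common predecessor of $v,v'$ that is \emph{not} below any element of $\iii_{p_i}\{v,v'\}$ (the two $w$'s live at the same non-root level and are unrelated), so your own extension clause fails and $p_i\cup p_j$ does not complete to a condition. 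For $\eta=\omega_1$ one further thins so that the root levels $L^*\setminus\{\omega_1\}$ form an initial segment of each $L_{p_i}$---this is the standard Roitman trick---and then every predecessor of a root element is forced to lie in $R$, killing the problem. For $\eta>\omega_1$ that trick is unavailable, since between two root levels there can be $\omega_1$ ordinals and you cannot push all non-root levels above the root. The paper states exactly this: ``For $\omega_1<\eta<\omega_2$ this simple approach does not work.''

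The paper's remedy is not to force for each $\eta$ separately but to force \emph{once}, for $\eta=\omega_1$, adding two further clauses ((P5): distinct nodes at the same level $<\omega_1$ are incompatible; (P6): predecessors of a successor-level node factor through the level just below) so that the generic object is an $\omega_1$-\emph{skeleton}. A stepping-up theorem of Er-rhaimini and Veli{\v c}kovi{\'c} (Theorem~\ref{tm:stepping_up}) then converts this single skeleton into $\<\omega\>_\eta\concat\<\lambda\>$-posets for every $\eta<\omega_2$ of cofinality $\omega_1$, all in the same model; this also gives the quantifier order in the theorem (one extension serves all $\eta$), which your scheme would not. If you want to force directly for a fixed $\eta>\omega_1$, it can be done, but it needs substantially more structure on the conditions---the orbit machinery of \cite{M} used in Section~4---not just the coherence clause you wrote.
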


The theorem above is a bit stronger than Theorem \ref{Theorem_2}
for ${\kappa}={\omega}$, because the generic extension does not
depend on ${\eta}$. However, as we will see, its proof is much
simpler than the proof of the general case.

By Lemma \ref{lm:Ba}, it is enough to construct a c.c.c. poset
$\mc P$ such that in $V^{\mc P}$ for each $ {\eta}<{\omega}_2$
with $\cf({\eta})={\omega}_1$ there is an
$\<{\omega}\>_{\eta}\concat \<{\lambda}\>$-poset.

For ${\eta}={\omega}_1$ it is straightforward to obtain a suitable
$\mc P$: all we need is to plug  Kosmider's strongly unbounded
function into the original argument of Roitman. For
${\omega}_1<{\eta}<{\omega}_2$ this simple approach does not work,
but we can use the ``stepping-up'' method of Er-rhaimini and
Veli{\v c}kovic from \cite{EV}. Using this method, it will be
enough to construct a single $\<{\omega}\>_{{\omega}_1}\concat
\<{\lambda}\>$-poset (with some extra properties) to obtain
$\<{\omega}\>_{\eta}\concat \<{\lambda}\>$-posets for each $
{\eta}<{\omega}_2$ with $\cf({\eta})={\omega}_1$.

\vspace{1mm} To start with, we adapt the notion of a skeleton
introduced in {\cite{EV}} to the cardinal sequences we are
considering.
\begin{definition}
Assume  that  $\mc T=\<T,\prec\>$ is an $\mf s$-poset such that
$\mf s$ is a cardinal sequence of the form
$\<{\kappa}\>_{\mu}\concat \<{\lambda}\>$ where $\kappa, \lambda$
are infinite cardinals with $\kappa < \lambda$ and $\mu$ is a
non-zero ordinal. Let $i$ be the infimum function associated with
$\mc T$. Then
 for $\gamma < \mu$ we say that $T_{\gamma}$, the $\gamma^{th}$-level of $\mc T$, is a {\em
bone level}   iff the following holds:
\begin{enumerate}[(1)]

\item $i\{s,t\}=\empt$ for every $s,t\in T_{\gamma}$ with  $s\ne
t$. \item If $x\in T_{{\gamma}+1}$ and $y\prec x$ then there is a
$z\in T_{\gamma}$ with $y\preceq z\prec x $.

\end{enumerate}
We say that $\mc T$ is a {\em ${\mu}$-skeleton} iff $T_{\gamma}$
is a bone level of $\mc T$ for each ${\gamma}<{\mu}$.
\end{definition}

The next statement  can be proved by a straightforward
modification of the proof of \cite[Theorem 2.8]{EV}.

\begin{theorem}\label{tm:stepping_up}
Let  ${\kappa}, {\lambda}$ be infinite cardinals. If there is a
$\<{\kappa}\>_{{\kappa}^+}\concat\<{\lambda}\>$-poset which is a
${\kappa}^+$-skeleton, then for each ${\eta}<{\kappa}^{++}$ with
$cf({\eta})={\kappa}^+$ there is a
$\<{\kappa}\>_{\eta}\concat\<{\lambda}\>$-poset.
\end{theorem}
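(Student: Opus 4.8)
The plan is to prove Theorem~\ref{tm:stepping_up} by a ``stepping-up'' construction that builds a $\<{\kappa}\>_{\eta}\concat\<{\lambda}\>$-poset out of the given ${\kappa}^+$-skeleton $\mc S=\<S,\prec_S\>$ of length ${\kappa}^{++}$ (more precisely, of the form $\<{\kappa}\>_{{\kappa}^+}\concat\<{\lambda}\>$). Since $\cf({\eta})={\kappa}^+$ and ${\eta}<{\kappa}^{++}$, I would first fix a strictly increasing continuous cofinal sequence $\<{\eta}_{\xi}:{\xi}<{\kappa}^+\>$ of ordinals with supremum ${\eta}$, together with a matching system of order-isomorphisms between the initial segments $[0,{\eta}_{\xi})$ of ${\eta}$ and initial segments of length below ${\kappa}^+$. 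The idea is that the target poset of height ${\eta}$ is assembled as a ``union along a tree/ladder'' of copies of the bottom part $\<{\kappa}\>_{{\kappa}^+}$ of the skeleton, where the bone-level conditions (1) and (2) in the definition of a skeleton are exactly what is needed to glue consecutive copies together coherently. The top level of length ${\lambda}$ is carried over from the single ${\lambda}$-level of $\mc S$.

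Concretely, I would define $T=\bigcup_{{\alpha}<{\eta}}T_{\alpha}$ with $T_{\alpha}=\{{\alpha}\}\times{\kappa}$ for ${\alpha}<{\eta}$ and a top level $T_{\eta}=\{{\eta}\}\times{\lambda}$, and then define the order $\prec$ by transferring, via the fixed isomorphisms, the skeleton order $\prec_S$ on each block determined by the ladder. The key point is to specify, for each pair ${\alpha}<{\beta}<{\eta}$, how an element of $T_{\beta}$ relates to elements of $T_{\alpha}$: one chooses a block $[{\eta}_{\xi},{\eta}_{\xi'})$ (or a nested pair of blocks) containing the relevant levels and uses the corresponding copy of $\prec_S$, then composes across block boundaries using condition (2) of the bone levels, which guarantees that every predecessor of an element at level ${\gamma}+1$ already passes through level ${\gamma}$. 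This coherence is what lets the local copies agree on overlaps and produce a single transitive relation on all of $T$. For the top level, I would let each element of $T_{\eta}$ sit below-the-levels exactly as the unique ${\lambda}$-level of $\mc S$ does, after fixing an appropriate identification of the cofinal block structure with the $\mc S$-level structure below its ${\lambda}$-level.

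I then have to verify that $\<T,\prec\>$ really is a $\<{\kappa}\>_{\eta}\concat\<{\lambda}\>$-poset, i.e. that it satisfies conditions (1)--(4) of Definition~\ref{df:s-poset}. Conditions (1) and (2) (the level decomposition and the monotonicity of $\prec$ with respect to levels) are immediate from the construction. Condition (4), that each $t\in T_{\beta}$ has infinitely many $\prec$-predecessors on every lower level $T_{\alpha}$, follows from the same property in $\mc S$ transported through the isomorphisms, again using bone condition (2) to push predecessors down across block joints. The substantive point is condition (3): given $\{s,t\}$, I must exhibit a finite set $\iii\{s,t\}$ of ``meets'' such that $u\preceq s\land u\preceq t$ iff $u\preceq v$ for some $v\in\iii\{s,t\}$. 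Here bone condition (1), $i\{s,t\}=\empt$ for distinct $s,t$ on a bone level, plays the crucial role, ensuring that when $s,t$ lie on the \emph{same} transferred level their common lower set is empty, while for general $s,t$ the finite meet set is obtained by pulling back the finite infimum sets of $\mc S$ along the relevant isomorphism(s) and taking their union over the finitely many blocks that can be involved.

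The main obstacle I expect is precisely this verification of the finite-meet condition (3) across block boundaries: because a single pair $\{s,t\}$ in $T$ may be governed by different local copies of $\mc S$ depending on which ladder block one works in, I must check that the infimum sets computed in distinct copies are compatible and that their union remains finite. Controlling this is exactly where the bone-level structure is designed to help — condition (1) kills spurious meets within a level and condition (2) forces predecessors to descend through each successor level, so that the comparison of $s$ and $t$ ultimately reduces to finitely many infima in $\mc S$, each finite by Definition~\ref{df:s-poset}(3) for the skeleton. Since the statement asserts that this is ``a straightforward modification of the proof of \cite[Theorem 2.8]{EV},'' I would lean on the bookkeeping from \cite{EV} for the ladder-and-isomorphism combinatorics and concentrate the new work on confirming that the extra top level of width ${\lambda}$ interacts correctly with the stepped-up copies of the ${\kappa}$-block.
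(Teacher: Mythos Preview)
The paper does not actually prove this theorem: it simply states that it ``can be proved by a straightforward modification of the proof of \cite[Theorem 2.8]{EV}'' and moves on. Your sketch is exactly an outline of that Er-rhaimini--Veli{\v c}kovi{\'c} stepping-up construction---using a continuous cofinal ladder in ${\eta}$, gluing copies of the skeleton along bone levels, with conditions (1) and (2) of the bone-level definition doing the work of making the infimum sets cohere---together with the obvious adaptation of carrying the single top ${\lambda}$-level through, so your approach is the intended one and there is nothing further in the paper to compare against. (One small slip: the skeleton has height ${\kappa}^{+}+1$, not ${\kappa}^{++}$; your parenthetical already corrects this.)
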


So, to get  Theorem \ref{Theorem_3} it is enough to prove the
following result.
\begin{theorem}\label{Theorem_3x} Let $\lambda$ be a cardinal with
$\lambda \geq \omega_3$. Assume that there is an $\omega_1$-
strongly unbounded function on $\la$. Then, in some  c.c.c.
generic extension there is an
$\<{\omega}\>_{{\omega}_1}\concat\<{\lambda}\>$-poset which is an
${\omega}_1$-skeleton.
\end{theorem}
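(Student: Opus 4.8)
The plan is to build the required poset $\mc T=\<T,\prec\>$ in a c.c.c.\ extension by forcing with finite approximations, following Roitman's construction but substituting the strongly unbounded function $F\colon[\la]^2\to\omg_1$ for her NDP function (this is what lets $\la$ be arbitrarily large) and building the two bone conditions directly into the approximations so that the generic object is automatically an $\omg_1$-skeleton. Fix $T=\bigcup_{\al\le\omg_1}T_\al$ with $T_\al=\{\al\}\times\omg$ for $\al<\omg_1$ and $T_{\omg_1}=\{\omg_1\}\times\la$, write $\operatorname{lev}(s)=\al$ for $s\in T_\al$, and set $T^-=\bigcup_{\ga<\omg_1}T_\ga$. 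A condition is a pair $p=\<a_p,\prec_p\>$ with $a_p\in[T]^{<\omg}$ and $\prec_p$ a strict partial order on $a_p$ that respects levels ($s\prec_p t\Rightarrow\operatorname{lev}(s)<\operatorname{lev}(t)$) and satisfies: (1) distinct $s,t\in a_p\cap T^-$ on the same level have no common $\prec_p$-predecessor; (2) whenever $y\prec_p x$ with $x\in T_{\ga+1}$ there is $z\in a_p\cap T_\ga$ with $y\preceq_p z\prec_p x$; and, crucially, (3) if some $u$ is $\prec_p$-below two distinct top nodes $\<\omg_1,\xi\>,\<\omg_1,\zeta\>$, then $\operatorname{lev}(u)<F\{\xi,\zeta\}$. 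I order $\mc P$ by $q\le p$ iff $a_q\supseteq a_p$, $\prec_q$ agrees with $\prec_p$ on $a_p$, and no $u\in a_q$ is a common $\prec_q$-lower bound of a pair $s,t\in a_p$ unless $u\preceq_q v$ for some common $\preceq_p$-lower bound $v\in a_p$ of $s,t$ (so that finite meets computed inside a condition are frozen).

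Let $\prec=\bigcup\{\prec_p:p\in G\}$. Conditions~(1) and~(2) of Definition~\ref{df:s-poset} are immediate since $\prec$ respects levels and $|T_\al|$ is as prescribed. Condition~(3) of Definition~\ref{df:s-poset} follows from the freezing clause together with the trivial density fact that any two nodes occur in a common member of $G$: then $\iii\{s,t\}$ is the finite set of $\prec_p$-maximal common lower bounds of $s,t$ computed in any such $p$, and freezing keeps it from growing. For condition~(4) I would prove the density lemma that, given $p$, $t\in a_p$, $\al<\operatorname{lev}(t)$ and $n<\omg$, one may extend $p$ by $n$ fresh level-$\al$ nodes placed $\prec$-below $t$; when $t\in T^-$ one repairs bone condition~(2) by inserting the finitely many intermediate nodes demanded down to the nearest limit level, and when $t$ is a top node one notes that fresh predecessors of a single top violate no instance of~(3). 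Finally, the skeleton requirements \emph{are} conditions~(1),(2) imposed on every member of $\mc P$: since these are preserved under extension, bone level~(1) passes to $\prec$, and since every condition is bone-$(2)$-closed, any $y\prec x$ with $x\in T_{\ga+1}$ already has its witness $z$ inside some condition, so bone level~(2) holds in $V^{\mc P}$. Thus $\mc T$ is an $\<\omg\>_{\omg_1}\concat\<\la\>$-poset that is an $\omg_1$-skeleton.

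For the c.c.c., let $\{p_\theta:\theta<\omg_1\}\subseteq\mc P$. Applying the $\Delta$-system lemma to the finite low parts $a_{p_\theta}\cap T^-$ (living in the set $T^-$ of size $\omg_1$) and thinning to a fixed isomorphism type, I may assume they form a $\Delta$-system with root $R\subseteq T^-$ on which all $\prec_{p_\theta}$ agree and which carries a single isomorphism type to the private low nodes; put $\delta=\sup\{\operatorname{lev}(r)+1:r\in R\}<\omg_1$. Thinning further I arrange that the top-index sets $d_{p_\theta}=\{\xi:\<\omg_1,\xi\>\in a_{p_\theta}\}$ form a $\Delta$-system with root $D$, that the whole pattern of $\prec_{p_\theta}$-relations between low and top nodes is uniform, and that the restriction of each condition to the shared tops $\{\<\omg_1,\xi\>:\xi\in D\}$ and their predecessors is literally constant and contained in $R$; then $a_{p_\theta}\cap a_{p_{\theta'}}=R\cup\{\<\omg_1,\xi\>:\xi\in D\}$. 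Write $s_\theta=d_{p_\theta}\setm D$, pairwise disjoint of some fixed size $\nu<\omg$. Applying the strongly unbounded property of $F$ to $A=\{s_\theta:\theta<\omg_1\}$ and the ordinal $\delta$, I obtain $\theta\ne\theta'$ with $F\{\xi,\zeta\}>\delta$ for all $\xi\in s_\theta$, $\zeta\in s_{\theta'}$. Let $q=\<a_{p_\theta}\cup a_{p_{\theta'}},\prec_q\>$ with $\prec_q$ the transitive closure of $\prec_{p_\theta}\cup\prec_{p_{\theta'}}$. Since every step of a $\prec_q$-chain raises the level, $\prec_q$ respects levels and is a strict order; bone level~(2) is inherited because a witness for a relation crossing $R$ is supplied by whichever condition contains the top node; bone level~(1) survives by the iso-type uniformity (a bad common lower bound in $q$ would force a bone-$(1)$ violation inside $p_{\theta'}$); and the freezing clause holds as the union introduces no lower bound beneath an old meet.

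The one substantial point — and the step I expect to be the main obstacle — is verifying constraint~(3) for $q$, which is exactly what the design of~(3) and of the $\Delta$-reduction is built to make work. For a pair of tops from one $p_\theta$ it holds by hypothesis; for a pair meeting $D$ it holds because those predecessors were made constant and confined to $R$, so $q$ adds no new common lower bound. The remaining case is $\xi\in s_\theta$, $\zeta\in s_{\theta'}$: since these tops are private to different conditions, any $u$ that is $\prec_q$-below both must have a $\prec_q$-path crossing $R$, so $u\prec_q r$ for some $r\in R$ and hence $\operatorname{lev}(u)<\operatorname{lev}(r)\le\delta<F\{\xi,\zeta\}$, as required. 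Thus $q\le p_\theta,p_{\theta'}$, so $\mc P$ is c.c.c., and in $V^{\mc P}$ the generic $\mc T$ is the desired $\<\omg\>_{\omg_1}\concat\<\la\>$-poset that is an $\omg_1$-skeleton. The whole force of the strongly unbounded function is that every cross-condition common predecessor is pushed beneath a root node, so its level lies below the single ordinal $\delta$ that $F$ is asked to exceed; the only secondary nuisance is bookkeeping bone level~(2) through both the density lemma for condition~(4) and the gluing.
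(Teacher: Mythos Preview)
Your construction and its analysis match the paper's almost exactly: finite conditions carrying the two bone clauses and an $F$-bound on common predecessors of distinct top nodes (the paper's (P5), (P6), (P4) respectively), density handled by inserting the finitely many successor-level witnesses below the target, and c.c.c.\ via a $\Delta$-system reduction followed by one appeal to strong unboundedness. Encoding infima through a freezing clause in the ordering rather than as an explicit third coordinate of each condition is a standard equivalent formulation.

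The one genuine gap is in your thinning before amalgamation. You pass to a $\Delta$-system on the node sets $a_{p_\theta}\cap T^-$ and to a fixed isomorphism type, but the paper also thins the \emph{level sets} $\pi[X_\nu]$ to a $\Delta$-system with root $L^*$, arranges that at every level in $L^*\setminus\{\omega_1\}$ all conditions carry literally the same nodes, and then makes $L^*\setminus\{\omega_1\}$ an initial segment of each $\pi[X_\nu]$. Without the level-set thinning your verification of bone clause~(1) for $q$ breaks: take $s\in R$ and a private $t\in a_{p_\theta}\setminus R$ at the \emph{same} level $\gamma$, pick $r\in R$ with $r\prec_{p_\theta}t$ and $\operatorname{lev}(r)<\gamma$, and a private $u\in a_{p_{\theta'}}$ with $u\prec_{p_{\theta'}}s$ and $u\prec_{p_{\theta'}}r$; then $u$ is a common $\prec_q$-predecessor of the same-level pair $s,t$, yet neither $p_\theta$ nor $p_{\theta'}$ sees a bone-(1) violation, so your appeal to ``iso-type uniformity'' does not close the case. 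The initial-segment arrangement is what lets the paper take $\prec_r=\prec_p\cup\prec_q$ with no transitive closure and makes the extension and freezing verifications immediate. Once you add these two refinements to the $\Delta$-system step, your argument is correct and essentially identical to the paper's.
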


Let  $F:[\lambda]^2\rightarrow \omega_1$ be an $\omega_1$-
strongly unbounded function on $\la$. In order to prove Theorem
\ref{Theorem_3x}, we shall define a c.c.c. forcing notion $\mc
P=\<P,\le\>$ that adjoins an $\mf s$-poset $\mc T=\<T,\preceq\>$
 which is an ${\omega}_1$-skeleton,
where $\mf s$ is the cardinal sequence
$\<{\omega}\>_{\omega_1}\concat \<{\lambda}\>$.

\vspace{1mm} So, the underlying set of the required $\mf s$-poset
 is the set $T = \un \{T_{\al}:\al \leq \omega_1 \}$ where
$T_{\al} = \{\al\}\times \omega$ for $\al <\omega_1$ and
$T_{\omega_1} = \{\omega_1\}\times \lambda$. If $s = (\al,\nu)\in
T$, we write $\pi(s) = \al$ and $\xi(s) = \nu$.

\vspace{2mm} Then, we define the poset $\pcal=\<P,\leq\>$ as
follows. We say that $p = \<X,\preceq,i\>\in P$ iff the following
conditions hold:
\begin{enumerate}[(P1)]
\item $X$ is a finite subset of $T$.
 \item $\preceq$ is a partial order on
$X$ such that $s \prec t$ implies $\pi(s)<\pi(t)$.

 \item  $i : [X]^2 \to [X]^{<\omega}$ is an infimum function, that is, a function such
 that for every
$\{s,t\}\in [X]^2$ we have:
  \begin{equation}\notag
  \forall x\in X ([x\preceq s\land x\preceq t]\text{ iff } x\preceq  v \text{ for some } v\in i\{s,t\}).
  \end{equation}

\item If $s,t\in X\cap T_{{\omega}_1}$ and $v \in i\{s,t\}$, then
$\pi(v) \in F\{\xi(s),\xi(t)\}$.

\item If $s,t\in X$ with ${\pi}(s)={\pi}(t)<{\omega}_1$, then
$i\{s,t\}=\empt$.

\item If $s,t\in X$, $s\prec t$ and ${\pi}(t)={\alpha}+1$, then
there is a $u\in X$ such that $s\preceq u\prec t$ and
${\pi}(u)={\alpha}$.

\end{enumerate}

\vspace{2mm} Now, we define $\leq$ as follows:
 $\<X',\preceq',i'\>\leq \<X,\preceq,i\> $
 iff $X\subseteq X'$, $\preceq\, =\,\preceq'\cap (X\times X)$ and
$i\subseteq i'$.

\vspace{2mm}  We will need condition (P4) in order to show that
$\pcal$ is c.c.c.

\begin{lemma}\label{Lemma 7}
 Assume that $p=\<X,\preceq,i\>\in  P$, $t\in X$,
$\alpha<\pi(t)$ and $n <\omega$. Then,  there is a
$p'=\<X',\preceq',i'\>\in P$ with $p'\leq p$ and there is an $s\in
X'\setminus X$ with $\pi(s) = \al$ and $\xi(s)> n$ such that, for
every $x\in X$, $s\preceq' x$ iff $t\preceq' x$. \end{lemma}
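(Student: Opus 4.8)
The plan is to prove the statement by induction on the level $\pi(t)$, the key building block being a single-point extension that inserts one new node directly below $t$. Fix $p=\<X,\preceq,i\>$, $t$, $\alpha$ and $n$. Since $\alpha<\pi(t)\le\omega_1$ we have $\alpha<\omega_1$, so $T_\alpha=\{\alpha\}\times\omega$ is infinite; as $X$ is finite we may choose $s=(\alpha,m)\in T_\alpha\setminus X$ with $m>n$, whence $\xi(s)=m>n$. The natural attempt is to put $X'=X\cup\{s\}$, let $\preceq'$ agree with $\preceq$ on $X$, make $s$ minimal (so $x\not\preceq' s$ for every $x\in X$), and declare $s\preceq' x$ iff $t\preceq x$; correspondingly set $i'=i$ on $[X]^2$, and for the new pairs put $i'\{s,x\}=\{s\}$ when $t\preceq x$ and $i'\{s,x\}=\emptyset$ otherwise.

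First I would verify that this single-point extension satisfies (P1)--(P5) and is a condition below $p$. Here (P2) holds because $t\preceq x$ forces $\pi(t)\le\pi(x)$, hence $\alpha<\pi(x)$. The crucial point for (P3) is that, since $t\in X$, evaluating the infimum clause of $p$ at the element $t$ gives, for every old pair $\{a,b\}$, that $t\preceq a\wedge t\preceq b$ iff $t\preceq v$ for some $v\in i\{a,b\}$; as $s$ lies below $a,b$ exactly when $t$ does, the old infima stay correct after $s$ is adjoined, and for a new pair $\{s,x\}$ the only element below $s$ is $s$ itself, so the infimum clause reduces to the two displayed cases. Condition (P4) is vacuous for the new pairs because $\pi(s)=\alpha<\omega_1$ puts $s\notin T_{\omega_1}$, and (P5) holds because $\pi(x)=\pi(s)$ together with $t\preceq x$ would force $\pi(t)\le\pi(x)=\alpha<\pi(t)$.

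The main obstacle is (P6), and it is precisely what forces the induction. For a new pair $\{s,x\}$ with $t\prec x$ and $\pi(x)$ a successor, the required witness is already present: applying (P6) of $p$ to $t\prec x$ yields $w\in X$ at level $\pi(x)-1$ with $t\preceq w\prec x$, and then $s\preceq' w\prec' x$ since $t\preceq w$. The genuinely troublesome pair is $\{s,t\}$ itself when $\pi(t)=\beta+1$ is a successor and $\alpha<\beta$, for then no node can lie at level $\beta$ between $s$ and $t$, and $s$ is too low to be its own witness. When $\pi(t)$ is a limit (in particular when $t\in T_{\omega_1}$) or when $\alpha=\beta=\pi(t)-1$, this difficulty vanishes: either (P6) is vacuous at $t$, or $s$ itself is the witness, so the single-point extension already delivers the desired $p'$.

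To handle the remaining successor case $\pi(t)=\beta+1$ with $\alpha<\beta$ I would run the induction on $\pi(t)$. First apply the single-point extension in the already solved case $\alpha=\beta=\pi(t)-1$ to obtain $p''\le p$ and a new node $s'$ at level $\beta$ with $s'\preceq'' x$ iff $t\preceq x$ for all $x\in X$. Since $\pi(s')=\beta<\pi(t)$, the induction hypothesis applied to $p''$, the target $s'$, the level $\alpha$ and the bound $n$ produces $p'\le p''\le p$ and a new node $s$ at level $\alpha$ with $\xi(s)>n$ and $s\preceq' y$ iff $s'\preceq'' y$ for all $y$ in the domain $X''\supseteq X$ of $p''$. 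Restricting to $x\in X$ and composing the two equivalences gives $s\preceq' x$ iff $t\preceq x$, which is exactly the assertion; well-foundedness of the ordinals guarantees termination. I expect the bulk of the write-up to be the routine but somewhat lengthy check of (P1)--(P6) for the single-point extension, with (P6) as described being the only conceptually delicate point.
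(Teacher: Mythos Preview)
Your proof is correct and, once the induction is unwound, coincides with the paper's: both arguments insert a finite chain of new points at the levels $\alpha$ and at each successor-predecessor of $\pi(t)$ above $\alpha$ (the paper's set $L=\{\alpha\}\cup\{\xi:\alpha<\xi<\pi(t)\ \text{and}\ \xi+j=\pi(t)\ \text{for some}\ j<\omega\}$), made into a chain below $t$ and incomparable with everything not above $t$. The only difference is organizational: the paper writes down the whole chain $s_0\prec'\dots\prec' s_\ell\prec' t$ in one step and checks (P1)--(P6) once, whereas you obtain the same chain by iterated single-point extensions via transfinite induction on $\pi(t)$; your identification of (P6) at the pair $\{s,t\}$ as the sole obstruction is exactly what motivates the chain in the paper's direct construction.
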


\begin{proof}
Let $L=\{{\alpha}\}\cup \{{\xi}: {\alpha} < {\xi} < {\pi}(t) \land
\exists j<{\omega}\ {\xi}+j= \pi(t)\}$. Let
${\alpha}={\alpha}_0,\dots, {\alpha}_\ell$ be the increasing
enumeration of $L$. Since $X$ is finite, we can pick an $s_j\in
T_{\al_j}\setminus X$ with $\xi(s_j)> n$
 for $j\le\ell$.
Let $X'=X\cup \{s_j:j\le\ell\}$ and let
$$
\prec'=\prec \cup \{(s_j,y): j\le l, t\preceq y\}\cup
\{(s_j,s_{k}):j<k\le\ell\} .
$$
Now, we put $i'\{x,y\} = i\{x,y\}$ if $x,y\in X$, $i'\{s_j,y\} =
\{s_j\}$ if $t\preceq y$, $i'\{s_j,s_k\}=s_{\min(j,k)}$, and
$i'\{s_j,y\} = \emptyset$ otherwise. Clearly, $\<X',\preceq',i'\>$
is as required.
\end{proof}

\begin{lemma}\label{Lemma 8} If $\pcal$ preserves cardinals,
then $\pcal$ adjoins an
$\<{\omega}\>_{{\omega}_1}\concat\<{\lambda}\>$-poset which is an
${\omega}_1$-skeleton.
\end{lemma}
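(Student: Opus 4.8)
The plan is to let $G$ be a generic filter on $\pcal$ and define the generic object $\mc T=\<T,\preceq\>$ by setting $\preceq\ =\bigcup\{\preceq_p:p=\<X_p,\preceq_p,i_p\>\in G\}$, together with $i=\bigcup\{i_p:p\in G\}$. The underlying set $T$ is already fixed as $\un\{T_\al:\al\le\omega_1\}$. Since the conditions are ordered by end-extension (the new order restricted to old points equals the old order, and $i$ only grows), $\preceq$ is a well-defined partial order on all of $T$ and $i$ is a well-defined $[T]^2\to[T]^{<\omega}$ function. First I would verify the four defining clauses of Definition~\ref{df:s-poset}. Clause (2) is immediate from (P2), which is preserved under unions. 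For clause (3), the infimum property, I would fix $\{s,t\}\in[T]^2$, find a single $p\in G$ with $s,t\in X_p$, and check that $i_p\{s,t\}$ works globally: the forward direction needs that any $u$ below both $s$ and $t$ appears with $s,t$ in some common condition, which follows by directedness of $G$ and clause (P3) applied in that condition. Clause (1), that $T_\al=\{\al\}\times\kappa_\al$ has the right cardinality, is a matter of the definition of $T$.

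The genuinely generic content is in clause (4): for $\al<\beta$ and $t\in T_\beta$, the set $\{s\in T_\al:s\prec t\}$ must be infinite. This is exactly what Lemma~\ref{Lemma 7} delivers. The plan is to fix $t,\al,n$ and consider, for each $n<\omega$, the dense set
\begin{equation}\notag
D_{t,\al,n}=\{p\in P: \exists s\in X_p\ (\pi(s)=\al\land\xi(s)>n\land\forall x\in X_p\,(s\preceq_p x\Leftrightarrow t\preceq_p x))\}.
\end{equation}
Lemma~\ref{Lemma 7} shows $D_{t,\al,n}$ is dense below any condition containing $t$: given such a condition, it produces an extension adding a new $s$ at level $\al$ with $\xi(s)>n$ and $s\preceq x\Leftrightarrow t\preceq x$ for old $x$; in particular taking $x=t$ gives $s\prec t$. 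Meeting $D_{t,\al,n}$ for every $n$ then produces infinitely many such $s$, establishing (4). Here I would use that $\pcal$ preserves cardinals (the hypothesis of the lemma), so that $|T_\al|$ is computed the same in $V$ and $V^\pcal$ and the levels retain their intended sizes $\omega$ (for $\al<\omega_1$) and $\lambda$ (for $\al=\omega_1$).

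It remains to check that $\mc T$ is an $\omega_1$-\emph{skeleton}, i.e.\ that each $T_\gamma$ for $\gamma<\omega_1$ is a bone level. Condition (1) of a bone level, $i\{s,t\}=\empt$ for distinct $s,t\in T_\gamma$, is exactly clause (P5), which passes to the union since $i$ only grows and (P5) is a local constraint. Condition (2) of a bone level, that every $x\in T_{\gamma+1}$ and $y\prec x$ admit a $z\in T_\gamma$ with $y\preceq z\prec x$, is exactly clause (P6): once $x$ and $y$ with $y\prec x$ both lie in some $p\in G$, clause (P6) supplies the witness $z$ inside $X_p$, and $y\preceq z\prec x$ is then inherited by the union order. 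I expect the main obstacle to be the bookkeeping in clause (3): one must argue that the \emph{finite} set $i_p\{s,t\}$ computed in a single condition $p$ remains correct against \emph{all} later points of $T$, not merely those in $X_p$. The key point is that by genericity any would-be new lower bound $u$ of $s$ and $t$ eventually enters a common condition with $s,t$, where clause (P3) forces $u\preceq v$ for some $v\in i_p\{s,t\}$; since subsequent conditions only extend $i$ consistently, $i\{s,t\}=i_p\{s,t\}$ is indeed finite and works throughout $T$. This uses nothing beyond the compatibility built into the ordering on $P$ and the directedness of $G$.
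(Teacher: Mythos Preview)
Your proposal is correct and follows essentially the same approach as the paper: define the generic poset as the union of the conditions in the filter, invoke Lemma~\ref{Lemma 7} via a density argument to secure clause~(4) of Definition~\ref{df:s-poset}, and read off the bone-level conditions directly from (P5) and (P6). The paper's own proof is terser (it appeals to ``standard density arguments'' without writing out the dense sets or the verification of clause~(3)), but your more explicit treatment of the infimum function and the dense sets $D_{t,\alpha,n}$ is exactly what the paper intends.
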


\begin{proof}
Let $\mc G$ be a $\pcal$-generic filter. We put
$p=\<X_p,\preceq_p,i_p\>$ for $p\in \mc G$. By Lemma 10 and
standard density arguments, we have
\begin{equation}
T=\bigcup\{X_p:p\in \mc G\},
\end{equation}
and taking
\begin{equation}
\preceq=\bigcup\{\preceq_p:p\in \mc G\},
\end{equation}
the poset $\<T,\preceq\>$ is an $\<{\omega}\>_{\omega_1}\concat
\<{\lambda}\>$-poset. Especially, Lemma \ref{Lemma 7} ensures that
$\<T,\preceq\>$ satisfies (4) in Definition \ref{df:s-poset}.
 Properties (P5) and (P6) guarantee that
$\<T,\preceq\>$  is an ${\omega}_1$-skeleton.
  \end{proof}

Now, we prove the key lemma for showing that $\pcal$ adjoins the
required poset.

\begin{lemma}\label{Lemma9} $\pcal$ is c.c.c. \end{lemma}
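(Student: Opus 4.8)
The plan is to show $\pcal$ is c.c.c. by taking an arbitrary uncountable family $\{p_\xi : \xi < \omega_1\} \subseteq P$ and producing two compatible conditions. The decisive resource is property (P4), which ties the infimum of two top-level nodes to the strongly unbounded function $F$; this is the only place where the hypothesis on $F$ can be leveraged, so the argument must be arranged so that amalgamating two conditions forces us to check exactly the kind of inequality $F$ guarantees.

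First I would run a standard $\Delta$-system reduction. Writing each $p_\xi = \<X_\xi, \preceq_\xi, i_\xi\>$ with $X_\xi$ finite, I would pass to an uncountable subfamily so that the sets $X_\xi$ form a $\Delta$-system with root $R$, the conditions agree completely on $R$ (same partial order, same infimum function, same ``shape''), and all the combinatorial data above $R$ is isomorphic across the family: the number of points on each level, the pattern of $\prec$ and of $i$ on the non-root parts, and the way the non-root points relate to $R$. Since $\ka^{<\ka}=\ka$ with $\ka=\omega$ here gives only countably many finite patterns, and there are only countably many possible values of the relevant restrictions, such a uniform uncountable subfamily exists. At this point the only genuine difference between two conditions $p_\xi$ and $p_{\eta}$ lies in the actual ordinals $\xi(s)$ for the top-level points $s \in X_\xi \cap T_{\omega_1}$ and in the levels $\pi(s) < \omega_1$ of the lower points.

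Next I would attempt the amalgamation $\<X_\xi \cup X_{\eta}, \preceq, i\>$, where $\preceq$ and $i$ extend the two given orders by declaring distinct non-root points incomparable and their infima empty, except where (P4) and (P6) force something. The obstruction is concentrated in verifying (P4): for top-level points $s \in X_\xi$ and $t \in X_{\eta}$ whose intended infimum is nonempty, I need a witness $v$ with $\pi(v) \in F\{\xi(s),\xi(t)\}$, and more delicately I must ensure that the ``forced'' infima do not create new comparabilities violating (P2) or (P5). The hard part will be arranging the ordinals so that $F\{\xi(s),\xi(t)\}$ is large enough to supply every needed witness level simultaneously. This is exactly where strong unboundedness enters: letting $\delta$ be the supremum of all lower levels appearing in the root and in the fixed patterns, and forming the family $A$ of the (pairwise disjoint, finite, uniformly bounded size) sets $a_\xi = \{\xi(s) : s \in X_\xi \cap T_{\omega_1}\}$, strong unboundedness yields two indices $\xi, \eta$ with $F\{\xi(s),\xi(t)\} > \delta$ for all relevant $s,t$. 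Then for each pair of top-level points the witness can be taken to be a root node (or a node below both at a level $\le \delta < \pi(v)$), so (P4) is satisfiable and the remaining conditions (P3), (P5), (P6) follow from the uniformity secured in the $\Delta$-system step.

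The subtle point I expect to spend the most care on is bookkeeping the infimum function $i$ on the amalgamated condition so that (P3) genuinely holds — that is, that $i\{s,t\}$ really computes the set of common lower bounds in the combined order, not just in each half — because introducing a forced common lower bound below two top points can retroactively affect infima of other pairs. I would handle this by choosing $A$ to include \emph{all} the top-level points of each condition, applying strong unboundedness once to get the global bound $\delta$, and then defining $i$ on cross pairs using only root witnesses at levels $\le \delta$, which are shared by both conditions and hence cause no interference; pairs within a single $X_\xi$ keep their original infimum. The finiteness of each $a_\xi$ is what makes $A \subseteq [\lambda]^{<\omega}$ admissible in the definition of strongly unbounded, and the bound $\nu < \ka = \omega$ is automatically met.
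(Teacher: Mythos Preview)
Your broad strategy matches the paper's: thin to a $\Delta$-system with uniform isomorphism type, use strong unboundedness of $F$ to pick two conditions whose non-root top-level parts are ``far apart'' in the sense of $F$, then amalgamate by taking the union. But there is a genuine gap in the amalgamation step.

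You propose to extend $\preceq_p \cup \preceq_q$ by declaring distinct non-root points of $X_p$ and $X_q$ incomparable. This need not yield a partial order. Nothing in your reduction rules out a non-root element $u\in X_p\setminus X^*$ at a low level with $u \prec_p v$ for some $v\in X^* \cap T_{<\omega_1}$; if then $v\prec_q w$ for some $w\in X_q\setminus X^*$, transitivity forces $u \prec_r w$, contradicting your declaration. The same configuration shows that a cross pair can have common lower bounds outside the root, so your claim that ``root witnesses \dots\ cause no interference'' is unjustified. The paper handles this with one further thinning: after forming a $\Delta$-system on the level-sets $L_\nu = \pi[X_\nu]$ with root $L^*$, one discards the countably many $\nu$ whose non-root lower levels fall below $\max(L^*\setminus\{\omega_1\})$, so that $L^*\setminus\{\omega_1\}$ becomes an \emph{initial segment} of every $L_\nu$. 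After this step no non-root element can sit below any root element, the union $\preceq_p\cup\preceq_q$ is automatically transitive, every common lower bound of a cross pair lies in $X^*$, and (P5), (P6) are inherited directly from $p$ and $q$ --- nothing is ``forced''.

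A second, related issue: you set $\delta$ to be the supremum of lower levels ``in the root and in the fixed patterns''. The non-root lower levels are not fixed --- they vary across the family --- so any $\delta$ incorporating them would depend on the pair of conditions eventually chosen, which is circular since strong unboundedness requires $\delta$ to be fixed \emph{before} the pair is selected. With the initial-segment reduction in place the correct choice is simply $\delta=\max(L^*\setminus\{\omega_1\})$: every element of $i_r\{s,t\}$ for a cross top-level pair then lies in $X^*\cap T_{<\omega_1}$, hence at level at most $\delta < F\{\xi(s),\xi(t)\}$, and (P4) follows immediately.
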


\begin{proof}
Assume that $R = \<r_{\nu}:{\nu}<{{\omega_1}}\>\subseteq P$ with
$r_{\nu}\neq r_{\mu}$ for $\nu <\mu<\omega_1$. For $\nu
<\omega_1$, write $r_{\nu}=\<X_{\nu},\preceq_{\nu},\iii_{\nu}\>$
and put $L_{\nu} = \pi[X_{\nu}]$. By the $\Delta$-System Lemma, we
may suppose that the set $\{X_{\nu}:\nu <\omega_1\}$ forms a
$\Delta$-system with root $X^*$. By thinning out $R$ again if
necessary, we may assume that $\{L_{\nu}:\nu < \omega_1 \}$ forms
a $\Delta$-system with root $L^*$ in such a way that $X_{\nu}\cap
T_{\alpha} = X_{\mu}\cap T_{\alpha}$ for every $\al\in
L^*\setminus \{\omega_1\}$ and $\nu < \mu < \omega_1$. Without
loss of generality, we may assume that $\omega_1\in L^*$. Since
$\be\setminus \al$ is a countable set for $\al,\be\in L^*$ with
$\al < \be < \omega_1$, we may suppose that $ L^*\setminus
\{\omega_1\}$ is an initial segment of $L_{\nu}$ for every $\nu <
\omega_1$. Of course, this may require a further thinning out of
$R$. Now, we put $Z_{\nu} = X_{\nu}\cap T_{\omega_1}$ for $\nu <
\omega_1$. Without loss of generality, we may assume that the
domains of the forcing conditions of $R$ have the same size and
that there is a natural number $n > 0$ with $|Z_{\nu}\setminus
X^*| = |Z_{\mu}\setminus X^*| = n$ for $\nu <\mu <\omega_1$. We
consider in $T_{\omega_1}$ the well-order induced by $\lambda$.
Then, by thinning out $R$ again if necessary, we may assume that
for every $\{\nu,\mu\}\in [\omega_1]^2$ there is an
order-preserving bijection $h=h_{\nu,\mu} : L_{\nu}\rightarrow
L_{\mu}$ with $h\upharpoonright L^* = L^*$ that lifts to an
isomorphism of $X_{\nu}$ with $X_{\mu}$ satisfying the following:

\begin{enumerate}[(A)]
\item \vspace{1mm} For every $\al\in L_{\nu}\setminus
\{\omega_1\}$, $h(\al,\xi) = (h(\al),\xi)$.

\item  \vspace{1mm} $h$ is the identity on $X^*$.

\item  \vspace{1mm} For every $i < n$, if $x$ is the
$i^{th}$-element in $Z_{\nu}\setminus X^*$ and $y$ is the
$i^{th}$-element in $Z_{\mu}\setminus X^*$, then $h(x) = y$.

\item  \vspace{1mm} For every $x,y\in X_{\nu}$, $x\preceq_{\nu} y$
iff $h(x) \preceq_{\mu} h(y)$.

\item  \vspace{1mm} For every $\{x,y\}\in [X_{\nu}]^2$,
$h[i_{\nu}\{x,y\}] = i_\mu\{h(x),h(y)\}$.
\end{enumerate}

Now, we deduce from condition (P4) and the fact that $R$ is
uncountable that if $\{x,y\}\in [X^*]^2$ then
$i_{\nu}\{x,y\}\subseteq X^*$ for every $\nu < \omega_1$. So if
$\{x,y\}\in [X^*]^2$, then $i_{\nu}\{x,y\} = i_{\mu}\{x,y\}$ for
$\nu < \mu < \omega_1$.

\vspace{2mm} Let $\delta = \mbox{max}(L^*\setminus \{\omega_1\})$.
Since $F$ is an $\omega_1$-strongly unbounded function on
$\lambda$, there are ordinals $\nu,\mu$ with $\nu < \mu <\omega_1$
such that if we put $a =\{\xi\in\lambda : (\omega_1,\xi)\in
Z_{\nu}\setminus X^*\}$ and $a' = \{\xi\in\lambda :
(\omega_1,\xi)\in Z_{\mu}\setminus X^*\}$, then $F\{\xi,\xi'\} >
\delta$ for every $\xi\in a$ and every $\xi'\in a'$. Our purpose
is to prove that $r_{\nu}$ and $r_{\mu}$ are compatible in
$\pcal$.  We put $p= r_{\nu}$ and $q = r_{\mu}$. And we write
$p=\<X_p,\preceq_p,\ip\>$ and $q=\<X_q,\preq,\iq\>$. Then, we
define the extension $r = \<X_r,\preceq_r,i_r\>$ of $p$ and $q$ as
follows. We put $X_r = X_p\cup X_q$. We define $\preceq_r =
\preceq_p \cup \preceq_q$. Note that $\preceq_r$ is a partial
order on $X_r$, because $L^*\setminus \{\omega_1\}$ is an initial
segment of $\pi[X_p]$ and $\pi[X_q]$ . Now, we define the infimum
function $i_r$. Assume that $\{x,y\}\in [X_r]^2$. We put
$i_r\{x,y\} = i_p\{x,y\}$ if $x,y\in X_p$, and $i_r\{x,y\} =
i_q\{x,y\}$ if $x,y\in X_q$. Suppose that $x\in X_p\setminus X_q$
and $y\in X_q\setminus X_p$. Note that $x,y$ are not comparable in
$\<X_r,\preceq_r \>$ and there is no $u\in (X_p\cup X_q)\setminus
X^*$ such that $u \preceq_r x,y$. Then, we define $i_r\{x,y\} = \{
u\in X^*: u \prec_r x,y \}$. It is easy to check that $r\in P$,
and so $r\leq p,q$.
\end{proof}

After finishing the proof of  Theorem \ref{Theorem_2} for
${\kappa}={\omega}$, try to prove it for ${\kappa}={\omega}_1$.
So, assume that  $2^{\omega}={\omega}_1$, ${\omega}_4 \leq \la$,
and there is an ${\omega}_2$-strongly unbounded function on $\la$.
We want to find $\<{\omega}_1\>_{\eta}\concat
\<{\lambda}\>$-posets for each ordinal ${\eta}<{\omega}_3$ with
$\cf({\eta})={\omega}_2$ in some cardinal-preserving generic
extension. Since the ``stepping-up'' method of Er-rhaimini and
Veli{\v c}kovic worked for ${\kappa}={\omega}$, it is natural to
try to apply Theorem \ref{tm:stepping_up} for the case
${\kappa}={\omega}_1$. That is, we can try to  find a
cardinal-preserving generic extension that contains an
$\<{\omega}_1\>_{{\omega}_2}\concat \<\la\>$-poset which is an
${\omega}_2$-skeleton. For this, first we should consider the
forcing construction given in  \cite[Section 4]{KM} to add an
$\<{\omega}_1\>_{{\omega}_2}\concat \<\omega_3\>$-poset, and then
try to extend this construction to add the required
$\omega_2$-skeleton. However, the construction from \cite{KM} is
$\sigma$-complete and requires that CH holds in the ground model.
Then, the following results show that the forcing construction of
an $\<{\omega}_1\>_{{\omega}_2}\concat \<\la\>$-poset which is an
$\omega_2$-skeleton is quite hopeless, at least by using the
standard forcing from \cite{KM}.

If $X$ is the topological space associated  with a skeleton and
$x\in X$, we denote by $t(x,X)$ the tightness of $x$ in $X$.

\begin{proposition}\label{tm:problem}
Assume that $\mc T=\<T,\prec\>$ is a  $\mu$-skeleton, $\alpha <
\mu$ and $x\in I_{{\alpha}+1}(X_{\mc T})$. Then, $t(x,X_{\mc
T})={\omega}$.
\end{proposition}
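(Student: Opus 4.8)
The plan is to establish $t(x,X_{\mc T})=\omega$ by proving the two inequalities separately; the inequality $t(x,X_{\mc T})\ge\omega$ will follow once we note that $x$ is not isolated in $X_{\mc T}$, so the substance lies in the bound $t(x,X_{\mc T})\le\omega$. Fix $\al<\mu$ with $x\in I_{\al+1}(X_{\mc T})=T_{\al+1}$ and abbreviate $U(y)=U_{\mc T}(y)$. Since $x\preceq y$ implies $U(x)\subseteq U(y)$, a neighborhood base at $x$ is given by the sets
\[
N_F=U(x)\setm\un\{U(w):w\in F\},
\]
where $F$ runs over the finite subsets of $T$ with $x\not\preceq w$ for every $w\in F$. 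As $U(x)$ is itself open and we may clearly assume $x\notin A$, it is enough to treat the case $A\subseteq U(x)\setm\{x\}$, and we must produce a countable $B\subseteq A$ with $x\in\overline B$ whenever $x\in\overline A$.

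Next I would read off the skeleton structure at level $\al$. Put $D_x=\{z\in T_{\al}:z\prec x\}$; this set is infinite by condition (4) of Definition \ref{df:s-poset}. The bone-level condition (2) gives $U(x)\setm\{x\}=\un\{U(z):z\in D_x\}$, and the bone-level condition (1), i.e. $i\{z,z'\}=\empt$ for distinct $z,z'\in T_{\al}$, gives $U(z)\cap U(z')=\empt$; so $\{U(z):z\in D_x\}$ partitions $U(x)\setm\{x\}$. Writing $A_z=A\cap U(z)$, observe also that $x\notin\overline{A_z}$ for any single $z\in D_x$, since $U(x)\setm U(z)$ is a neighborhood of $x$ (note $x\not\preceq z$ because $z\in T_{\al}$) that misses $A_z$.

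The key step is a finiteness observation about how the basic closed sets $U(w)$ interact with this partition. For any $w$ with $x\not\preceq w$ we have $U(w)\cap U(x)=\un\{U(v):v\in i\{w,x\}\}$, and $i\{w,x\}$ is \emph{finite}. Each such $v$ satisfies $v\prec x$ (it cannot be $x$, for otherwise $x\preceq w$), so by the bone-level condition (2) and the disjointness above there is a unique $z_v\in D_x$ with $U(v)\subseteq U(z_v)$. Hence $U(w)\cap U(x)$ lies in the union of finitely many cells of the partition, and therefore so does $\un\{U(w):w\in F\}\cap U(x)$ for every finite $F$; equivalently, each neighborhood $N_F$ contains all but finitely many of the cells $U(z)$.

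To finish, suppose $x\in\overline A$ with $A\subseteq U(x)\setm\{x\}$. The set $D_x'=\{z\in D_x:A_z\neq\empt\}$ is infinite: were it finite, $A=\un\{A_z:z\in D_x'\}$ would be a finite union and $x\in\overline{A_z}$ for some $z$, contradicting the previous paragraph. Choose distinct $z_n\in D_x'$ and $a_n\in A_{z_n}$ for $n<\omega$, and set $B=\{a_n:n<\omega\}$. Given any $N_F$, the key step shows $\un\{U(w):w\in F\}$ meets only finitely many cells, hence is disjoint from $U(z_n)$ for all but finitely many $n$; fixing such an $n$ gives $a_n\in U(z_n)\subseteq N_F$, so $N_F\cap B\neq\empt$. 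Thus $x\in\overline B$ with $B$ countable, establishing $t(x,X_{\mc T})\le\omega$; combined with the non-isolation of $x$ (which is a limit of the infinite set $D_x$) this yields $t(x,X_{\mc T})=\omega$. The main obstacle is the key finiteness step of the third paragraph: it is precisely the finiteness of the infimum function $i\{w,x\}$, married to the disjointness of the cells $U(z)$ coming from the bone levels, that prevents finitely many basic closed sets from separating $x$ from a countable approximating set, and once this is secured the rest is routine.
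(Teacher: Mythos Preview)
Your proof is correct and follows essentially the same strategy as the paper: partition $U(x)\setminus\{x\}$ into the disjoint cells $U(z)$ for $z\in D_x=\{z\in T_\alpha:z\prec x\}$, observe that infinitely many cells meet $A$, and pick one point from each of countably many such cells to form $B$. The only minor divergence is in the verification that $x\in\overline B$: the paper invokes compactness of $U_{\mc T}(x)$ to conclude $B$ has an accumulation point, then rules out every $y\prec x$ (since at most one element of $B$ lies below any such $y$), whereas you check directly that each basic neighborhood $N_F$ meets $B$ by using that $i\{w,x\}$ is finite so $U(w)\cap U(x)$ touches only finitely many cells. Both routes are short; yours avoids the appeal to compactness at the cost of a slightly longer computation with the infimum function.
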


\begin{proof}
Assume that $A\subseteq T$ and $x\in A'$. We can assume that
$a\prec x$ for each $a\in A$.

Let
\begin{equation}
U=\{u\in I_{\alpha}(X_{\mc T}):u\prec x\land \exists a_u\in A\
a_u\preceq u\}.
\end{equation}
Since $y\prec x$ iff $y\preceq u$ for some $u\prec x$ with $u\in
I_{\alpha}(X_{\mc T})$, the set $U$ is infinite.

Pick $V\in \br U;{\omega};$, and put $B=\{a_v:v\in  V\}$. We claim
that $x\in B'$. Indeed, if $y\prec x$ then there is a $u\in
I_{\alpha}(X_{\mc T})$ such that $y\preceq u\prec x $. So $|\{b\in
B: b\preceq y\}|\le 1$. Hence $y\notin B'.$ However, $B$ has an
accumulation point because $B\subseteq U_{\mc T}(x)$ and $U_{\mc
T}(x)$ is compact in $X_{\mc T}$.
So, $B$ should converge to $x$.
\end{proof}

\begin{corollary}
If $\mc T$ is a $\mu$-skeleton, then $\mu\le |I_0(X_{\mc
T})|^{\omega}$. Especially, under CH an
$\<{\omega}_1\>_{{\omega}_2}\concat \<\la\>$-poset  can not be an
$\omega_2$-skeleton.
\end{corollary}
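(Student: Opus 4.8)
The plan is to derive this from Proposition \ref{tm:problem} by a simple cardinality estimate. Recall that $\mu$-skeletons are $\mf s$-posets whose cardinal sequence has the form $\<{\kappa}\>_{\mu}\concat\<{\lambda}\>$, so the bottom level is $I_0(X_{\mc T})=T_0=\{0\}\times{\kappa}$, which has size ${\kappa}$. The key observation is that \emph{every} point above the bottom level has countable tightness, and in fact each such point is the limit of a countable set of points drawn from a single Cantor--Bendixson level. I want to encode each point of $X_{\mc T}$ by a countable subset of the bottom level, which would give the bound $\mu\le|I_0(X_{\mc T})|^{\omega}={\kappa}^{\omega}$.

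First I would argue by induction on ${\alpha}<\mu$ that every $x\in I_{{\alpha}}(X_{\mc T})$ can be assigned a countable set $c(x)\subseteq I_0(X_{\mc T})$ in an injective fashion across levels. For ${\alpha}=0$ take $c(x)=\{x\}$. For the successor step, given $x\in I_{{\alpha}+1}(X_{\mc T})$, Proposition \ref{tm:problem} guarantees $t(x,X_{\mc T})={\omega}$; combined with the fact that $x\in I_{{\alpha}+1}$ means $x$ is a limit of points from $I_{{\alpha}}(X_{\mc T})$, we can fix a countable $B_x\subseteq I_{{\alpha}}(X_{\mc T})$ converging to $x$ and set $c(x)=\bigcup\{c(u):u\in B_x\}$, a countable union of countable sets, hence countable. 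At limit stages there are no new points to handle since $I_{{\alpha}}$ with ${\alpha}$ limit is empty in these skeletons (the levels below $\mu$ are exactly $T_{\gamma}$ for $\gamma<\mu$). The main point to verify is that distinct points receive distinct codes, or at least that the total number of points in $\bigcup_{{\alpha}<\mu}I_{{\alpha}}(X_{\mc T})$ is bounded by $|I_0(X_{\mc T})|^{\omega}$; since each level below $\mu$ is nonempty and the levels are indexed by ordinals below $\mu$, bounding the number of such points bounds $\mu$ itself.

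The cleanest route to the bound avoids worrying about injectivity of $c$ directly: it suffices to show that the map sending $x$ to the pair $({\alpha},c(x))$, where ${\alpha}$ is the level of $x$, is well-defined with countable second coordinate, and then to count. Since $T_{\gamma}\neq\empt$ for each $\gamma<\mu$, and each $T_{\gamma}$ with $\gamma>0$ consists of points determined (via the converging sets $B_x$) by their countable traces in the bottom level, a back-of-the-envelope count gives at most $|I_0(X_{\mc T})|^{\omega}$ many distinct values, whence $\mu\le|I_0(X_{\mc T})|^{\omega}$. For the ``especially'' clause, set ${\kappa}={\omega}_1$ and $\mu={\omega}_2$: under CH we have $|I_0(X_{\mc T})|^{\omega}={\omega}_1^{\omega}=2^{\omega}={\omega}_1<{\omega}_2$, contradicting $\mu\le|I_0(X_{\mc T})|^{\omega}$, so no $\<{\omega}_1\>_{{\omega}_2}\concat\<{\lambda}\>$-poset can be an ${\omega}_2$-skeleton.

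The step I expect to be the main obstacle is the precise bookkeeping needed to turn ``every point is coded by a countable set of bottom-level points'' into the clean cardinal inequality $\mu\le|I_0(X_{\mc T})|^{\omega}$ without overcounting. One must be careful that the induction genuinely produces, for each level $\gamma<\mu$ (all of which are nonempty), at least one point, and that the assignment $x\mapsto c(x)$ really does separate enough points to bound the number of \emph{levels} rather than merely the number of points within a single level. The resolution is that the number of distinct countable subsets of a ${\kappa}$-sized set is exactly ${\kappa}^{\omega}$, and each of the $\mu$ many nonempty levels contributes points with distinct codes across levels once one tracks the level index alongside $c(x)$; this is routine but must be stated carefully to make the final inequality airtight.
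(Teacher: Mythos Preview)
Your argument has two genuine gaps.

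First, the claim that ``$I_{\alpha}$ with $\alpha$ limit is empty in these skeletons'' is simply false. A $\mu$-skeleton is by definition an $\mf s$-poset for $\mf s=\<\kappa\>_\mu\concat\<\lambda\>$, so $I_\alpha(X_{\mc T})=T_\alpha=\{\alpha\}\times\kappa$ has $\kappa$ points for \emph{every} $\alpha<\mu$, limit or not. Proposition~\ref{tm:problem} says nothing about tightness at limit levels, so your inductive definition of $c(x)$ is undefined there, and the recursion does not go through.

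Second, even granting the recursion, your injectivity argument is circular. Passing to the pair $(\alpha,c(x))$ lands you in a set of size $|\mu|\cdot|I_0|^{\omega}$, which only bounds $|\mu|$ once you already know $|\mu|\le|I_0|^{\omega}$. The hand-wave that ``each of the $\mu$ many nonempty levels contributes points with distinct codes across levels once one tracks the level index'' does not supply the missing injection.

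Both problems disappear if you use Proposition~\ref{tm:problem} more directly. The proposition (and its proof) gives countable tightness at $x\in I_{\alpha+1}$ for arbitrary accumulating sets $A$; take $A=I_0(X_{\mc T})\cap U_{\mc T}(x)$, which accumulates at $x$ since $I_0$ is dense. The proof then produces a countable $B_x\subseteq I_0$ whose \emph{unique} accumulation point in the Hausdorff space $X_{\mc T}$ is $x$. Hence $x\mapsto B_x$ is injective on $\bigcup\{I_{\alpha+1}:\alpha+1<\mu\}$, giving at once $|\{\alpha+1<\mu\}|\le|I_0|^{\omega}$ and therefore $\mu\le|I_0|^{\omega}$. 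No induction, no limit levels, and injectivity comes for free from convergence.
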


Thus, we are unable to use Theorem \ref{tm:stepping_up} to prove
Theorem \ref{Theorem_2} even for  ${\kappa}={\omega}_1$. Instead
of this stepping-up method, in the next two sections we will
construct $\<{\omega}_1\>_{\eta}\concat \<{\lambda}\>$-posets
directly using the method of orbits from \cite{M}. This method was
used to construct by forcing $\<{\omega}_1\>_{{\eta}}$-posets for
$\omega_2\le \eta < \omega_3$. It is not difficult to get an
$\<{\omega}_1\>_{{\omega_2}}$-poset by means of countable
``approximations'' of the required poset. However, for $\omega_2\le
\eta < \omega_3$ we need the notion of orbit and a much more
involved forcing to obtain $\<{\omega}_1\>_{{\eta}}$-posets (see
\cite{M}).

\section{Combinatorial notions}

\normalsize

In this section, we define the combinatorial notions that will be
used in the proof of Theorem \ref{Theorem_2}.

 \vspace{2mm} If $\al,\be$ are ordinals with ${\alpha}\le {\beta}$ let
\begin{equation}
[{\alpha},{\beta})=\{{\gamma}:{\alpha}\le {\gamma}<{\beta}\}.
\end{equation}
We say that $I$ is an {\em ordinal interval} iff there are
ordinals ${\alpha}$ and ${\beta}$ with ${\alpha}\le {\beta}$ and
$I=[{\alpha},{\beta})$. Then, we write $I^-={\alpha}$ and
$I^+={\beta}$.

Assume that  $I=[{\alpha},{\beta})$ is an ordinal interval. If
${\beta}$ is a limit ordinal, let $\EE(I)=\{{\varepsilon}
^I_{\nu}:{\nu}<\mbox{ cf}(\beta)\}$ be a cofinal closed subset of
$I$ having order type $\cf{(\beta)}$ with ${\alpha} =
{\varepsilon}^I_{0}$, and then put
\begin{equation}
\operatorname{\mathcal
E}(I)=\{[{\varepsilon}^I_{\nu},{\varepsilon}^I_{\nu+1}):{\nu}<\cf{(\beta)}\}.
\end{equation}
If ${\beta}={\beta}'+1$ is a successor ordinal, put
 $\EE(I) =\{{\alpha},{\beta}'\}$
 and
\begin{equation}
\operatorname{\mathcal E}(I)=\{[{\alpha},{\beta}'),\{{\beta}'\}\}.
  \end{equation}

\vspace{1mm} Now, for an infinite cardinal $\kappa$ and an ordinal
$\eta$ with $\ka^+\leq \eta < \ka^{++}$ and $\mbox{ cf}(\eta) =
\ka^+$, we define ${\mathbb I}_{\eta} = \bigcup \{\ical_n : n <
\omega \}$ where:

\begin{equation}
\ical_0=\{[0,{\eta})\} \text{ and }
\ical_{n+1}=\bigcup\{\operatorname{\mathcal E}(I):I\in \ical_n\}.
 \end{equation}

 Note that ${\mathbb I}_{\eta}$ is a cofinal tree of intervals in the
sense defined in \cite{M}. So, the following conditions are
satisfied:

\begin{enumerate}[(i)]

\item  For every $I,J\in {\mathbb I}_{\eta}$, $I\subseteq J$ or
$J\subseteq I$ or $I\cap J = \emptyset$.

\item If $I,J$ are different elements of ${\mathbb I}_{\eta}$ with
$I\subseteq J$ and $J^+$ is a limit, \mbox{ then  $I^+ < J^+$ }.

 \item $\ical_n$ partitions $[0,\eta )$ for each $n < \omega$.

\item  $\ical_{n + 1}$ refines $\ical_n$ for each $n <\omega$.

\item  For every $\al <\eta$ there is an $I\in {\mathbb
I}_{\eta}$ such that $I^- = \al$.
\end{enumerate}

Then, for each ${\alpha} < {\eta}$ and $n<{\omega}$ we define $\II
( {\alpha},n)$ as the unique interval $I\in \ical_n$ such that
${\alpha}\in I$. And for each ${\alpha}< {\eta}$ we define
$n(\al)$ as the least natural number $n$ such that there is an
interval $I\in \ical_n$ with $I^-={\alpha}$. So if $n(\al) = k$,
then for every $m\geq k$ we have $I(\al,m)^-= \al$.

\vspace{2mm} Assume that ${\al}<{\eta}$. If $m < \nnn ({\al})$, we
define $o_m(\al) = \EE({\II( {\al},m)})\cap {\al}$. Then, we
define the {\em orbit} of $\al$ (with respect to ${\mathbb
I}_{\eta}$) as
\begin{equation}
o( {\al})=\bigcup\{o_m(\al): m<\nnn( {\al})\}.
\end{equation}

\vspace{1mm} For basic facts on orbits and trees of intervals, we
refer the reader to \cite[Section 1]{M}. In particular, we have
$|o(\al)|\leq \kappa$ for every ${\al}<{\eta}$.

We write $E([0,{\eta}))=\{{\varepsilon}_{\nu}:{\nu}<{\kappa}^+\}$.
  \begin{claim}\label{cl:epsilon}
$o({\varepsilon}_{\nu})=\{{\varepsilon}_{\zeta}:{\zeta}<{\nu}\}$
for ${\nu}<{\kappa}^+$.
  \end{claim}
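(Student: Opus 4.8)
The plan is to observe that the orbit of $\varepsilon_\nu$ is governed entirely by its zeroth layer, because $\nnn(\varepsilon_\nu)$ is as small as possible. First I would record the basic features of the enumeration: since $\cf(\eta)=\kappa^+\ge\omega_1>1$, the ordinal $\eta$ is a limit, so $\EE([0,\eta))$ is defined by the limit clause and $\<\varepsilon_\nu:\nu<\kappa^+\>$ is the strictly increasing (hence order-preserving) enumeration of a closed cofinal subset of $[0,\eta)$ of order type $\kappa^+$, with $\varepsilon_0=0$. In particular $\varepsilon_\zeta<\varepsilon_\nu$ if and only if $\zeta<\nu$, and (as $\kappa^+$ is a limit) $\nu+1<\kappa^+$ whenever $\nu<\kappa^+$, so $\varepsilon_{\nu+1}$ is always available.

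The key step is to compute $\nnn(\varepsilon_\nu)$. For $\nu=0$ we have $\varepsilon_0=0=[0,\eta)^-$, so $\nnn(\varepsilon_0)=0$, the defining union for $o(\varepsilon_0)$ is empty, and the claim reads $\empt=\empt$. For $\nu>0$ I claim $\nnn(\varepsilon_\nu)=1$. Indeed, the only member of $\ical_0$ is $[0,\eta)$, whose left endpoint is $0\ne\varepsilon_\nu$, so no interval of $\ical_0$ has left endpoint $\varepsilon_\nu$. On the other hand $\ical_1=\operatorname{\mathcal E}([0,\eta))=\{[\varepsilon_\mu,\varepsilon_{\mu+1}):\mu<\kappa^+\}$, and the interval $[\varepsilon_\nu,\varepsilon_{\nu+1})$ lies in $\ical_1$ with left endpoint exactly $\varepsilon_\nu$; hence $\nnn(\varepsilon_\nu)=1$.

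Finally I would assemble these. Since $\nnn(\varepsilon_\nu)=1$ for $\nu>0$, the union defining the orbit has a single term, so
\begin{equation}\notag
o(\varepsilon_\nu)=o_0(\varepsilon_\nu)=\EE(\II(\varepsilon_\nu,0))\cap\varepsilon_\nu=\EE([0,\eta))\cap\varepsilon_\nu,
\end{equation}
using that $\II(\varepsilon_\nu,0)=[0,\eta)$ is the unique interval of $\ical_0$. By the monotonicity recorded above, $\EE([0,\eta))\cap\varepsilon_\nu=\{\varepsilon_\zeta:\zeta<\nu\}$, which is exactly the claim. There is no real obstacle here: the whole argument hinges on the single observation that $\nnn(\varepsilon_\nu)=1$, which collapses the orbit to its zeroth layer; once that is in place the rest is immediate from the definitions of $\EE$, $\II$ and $o$.
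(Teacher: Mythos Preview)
Your proof is correct and follows essentially the same route as the paper: identify $\II(\varepsilon_\nu,0)=[0,\eta)$ and $\II(\varepsilon_\nu,1)=[\varepsilon_\nu,\varepsilon_{\nu+1})$, conclude $\nnn(\varepsilon_\nu)=1$, and read off $o(\varepsilon_\nu)=o_0(\varepsilon_\nu)=\EE([0,\eta))\cap\varepsilon_\nu$. Your version is in fact slightly more careful than the paper's, since you treat $\nu=0$ separately (where $\nnn(\varepsilon_0)=0$ rather than $1$), but the argument is the same.
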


  \begin{proof}
Clearly  $I({\varepsilon}_{\nu},0)=[0,{\eta})$
and $I({\varepsilon}_{\nu},1)=[{\varepsilon}_{{\nu}},
  {\varepsilon}_{{\nu}+1})$. So $n({\varepsilon}_{\nu})=1$.
Thus
$o({\varepsilon}_{\nu})=o_0({\varepsilon}_{\nu})=
E(I({\varepsilon}_{\nu},0))\cap {\varepsilon}_{\nu}=
E([0,{\eta}))\cap {\varepsilon}_{\nu}=\{{\varepsilon}_{\zeta}:{\zeta}<{\nu}\}$.
\end{proof}

For ${\alpha}<{\beta}<{\eta}$
let
\begin{equation}
j({\alpha},{\beta})=\max\{j: I({\alpha},j)=I({\beta},j)\},
\end{equation}
and put
\begin{equation}
J({\alpha},{\beta})=I({\alpha}, j({\alpha},{\beta})+1).
\end{equation}
For ${\alpha}<{\eta}$ let
\begin{equation}
J({\alpha},{\eta})=I({\alpha},1).
\end{equation}

\begin{claim}\label{cl:jplussz}
If  $\varepsilon_\zeta\le \al< \varepsilon_{\zeta+1}\leq \be\le
\eta$, then
$J(\al,\be)=[\varepsilon_\zeta,\varepsilon_{\zeta+1})$.
\end{claim}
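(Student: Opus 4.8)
The plan is to read off $J(\al,\be)$ directly from the definitions by pinning down $j(\al,\be)$. The first step is to identify the level-one partition $\ical_1$. Since $\cf(\eta)=\ka^+$ forces $\eta$ to be a limit ordinal, the limit clause in the definition of $\mathcal E$ applies to $I=[0,\eta)$, so $\ical_1=\mathcal E([0,\eta))=\{[\varepsilon_\nu,\varepsilon_{\nu+1}):\nu<\ka^+\}$; this is exactly the computation already recorded in Claim \ref{cl:epsilon}. In particular, because $\ical_1$ partitions $[0,\eta)$ (property (iii) for $n=1$) and $\varepsilon_\zeta\le\al<\varepsilon_{\zeta+1}$, the unique level-one interval containing $\al$ is $I(\al,1)=[\varepsilon_\zeta,\varepsilon_{\zeta+1})$.

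I would then split on whether $\be=\eta$. If $\be=\eta$, there is nothing left to do: by definition $J(\al,\eta)=I(\al,1)$, which we have just identified as $[\varepsilon_\zeta,\varepsilon_{\zeta+1})$.

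For the case $\varepsilon_{\zeta+1}\le\be<\eta$, the key step is to show $j(\al,\be)=0$, i.e. $I(\al,1)\ne I(\be,1)$ while $I(\al,0)=I(\be,0)$. The second equality is immediate since $\ical_0=\{[0,\eta)\}$. For the first, note that $\be\ge\varepsilon_{\zeta+1}$ places $\be$ in some level-one interval $[\varepsilon_{\zeta'},\varepsilon_{\zeta'+1})$ with $\zeta'\ge\zeta+1$, because every interval indexed by $\nu\le\zeta$ is contained in $[0,\varepsilon_{\zeta+1})$ and hence cannot contain $\be$. Thus $I(\be,1)\ne[\varepsilon_\zeta,\varepsilon_{\zeta+1})=I(\al,1)$, giving $j(\al,\be)=0$ and therefore $J(\al,\be)=I(\al,j(\al,\be)+1)=I(\al,1)=[\varepsilon_\zeta,\varepsilon_{\zeta+1})$, as required.

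There is no serious obstacle here; the statement is essentially a direct unwinding of the definitions of $I(\cdot,\cdot)$, $j$ and $J$ once $\ical_1$ is identified. The only points that require a little care are remembering that $\be=\eta$ must be handled separately (the definition $J(\al,\eta)=I(\al,1)$ is a special case, not covered by the formula for $j(\al,\be)$), and checking that the hypothesis $\al<\varepsilon_{\zeta+1}\le\be$ really does force $\al$ and $\be$ into distinct level-one intervals, which is what makes $j(\al,\be)$ drop to $0$.
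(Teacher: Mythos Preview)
Your proposal is correct and follows essentially the same approach as the paper: split on whether $\be=\eta$, use the definition $J(\al,\eta)=I(\al,1)$ in the first case, and in the second case show $j(\al,\be)=0$ by noting $I(\al,0)=I(\be,0)=[0,\eta)$ while $I(\al,1)=[\varepsilon_\zeta,\varepsilon_{\zeta+1})$ differs from $I(\be,1)$. You supply a bit more detail than the paper (spelling out why $\ical_1$ consists of the intervals $[\varepsilon_\nu,\varepsilon_{\nu+1})$ and why $\be\ge\varepsilon_{\zeta+1}$ forces $I(\be,1)\ne I(\al,1)$), but the argument is the same.
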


\begin{proof}
For ${\beta}={\eta}$, $J(\al,\be)= I(\al,1) =
[\varepsilon_\zeta,\varepsilon_{\zeta+1})$.

Now assume that ${\beta}<{\eta}$. Since
$I(\al,0)=I(\be,0)=[0,\eta)$, but $I(\al,1)=[\varepsilon_\zeta,
\varepsilon_{\zeta+1} )$ and
$I(\be,1)=[\varepsilon_\xi,\varepsilon_{\xi+1})$ for some
$\varepsilon_\xi$ with  $\varepsilon_{\zeta+1}\le\varepsilon_\xi$,
we have
 $j(\al,\be)=0$ and so  $J(\al,\be)=[\varepsilon_\zeta,  \varepsilon_{\zeta+1} )$.
\end{proof}

\section{Proof of the Main Theorem}

In order to prove Theorem \ref{Theorem_2}, suppose that $\ka,\la$
are infinite cardinals with  $\ka^{+++} \leq \la$ and
$\ka^{<\ka}=\ka$, $\eta$ is an ordinal with $\ka^+\leq \eta <
\ka^{++}$ and $\mbox{cf}(\eta) = \ka^+$, and 
there is a $\ka^+$-
strongly unbounded function on $\la$. We will use a refinement of
the arguments given in \cite{M} and \cite[Section 4]{KM}.

\vspace{1mm} First, we define the underlying set of our
construction. For every ordinal $\al <\eta$, we put $T_{\al} =
\{\al\}\times \kappa$. And we put $T_{\eta} = \{\eta\}\times
\lambda$. We define $T = \un \{T_{\al}:\al \leq \eta \}$.
Let $T_{<\eta}=T\setm T_\eta$.
If $s =
(\al,\nu)\in T$, we write $\pi(s) = \al$ and $\xi(s) = \nu$.

\vspace{1mm} We put $\mathbb I = {\mathbb I}_{\eta}$. Also, we
define $E = E([0,\eta))=\{{\varepsilon}_{\nu}:{\nu}<{\kappa}^+\}$.
Since there is a  $\ka^+$-{strongly unbounded function on} $\la$
and $\cf({\eta})={\kappa}^+$ there is a function $F:\br
{\lambda};2;\to E$ such that
\begin{itemize}
 \item[$(\star)$]
For every ordinal ${\gamma}<{\eta}$ and every family $A\subseteq
[\la]^{<{\kappa}}$ of pairwise disjoint sets with $|A|=\ka^+$,
 there are different $a,b\in A$
such that $F\{\al,\be\}> {\gamma}$ for every $\al\in a$ and $ \be\in
b$.
\end{itemize}

\


\vspace{1mm} Let $\Lambda\in \mathbb I$ and $\{s,t\}\in [T]^2$
with $\pi(s) < \pi(t)$. We say that {\em $\Lambda$ isolates $s$
from $t$} iff $\Lambda^- <{\pi} (s)<\Lambda^+$ and $\Lambda^+\le
{\pi} (t)$.

\vspace{2mm} Now we define the poset
$\pcal=\<P,\leq\>$ as follows. We say that $p
= \<X,\preceq,i\>\in P$ iff the following conditions hold:
\begin{enumerate}[(P1)]
\item $X\in [T]^{<\kappa}$.
 \item $\preceq$ is a partial order on
$X$ such that $s \prec t$ implies ${\pi}(s)<{\pi}(t)$.

 \item  $\iii:\br X;2; \to X\cup\{{\rm undef}\}$ is an infimum function, that is, a function such
 that for every
$\{s,t\}\in \br X;2;$ we have:
  \begin{equation}\notag
  \forall x\in X ([x\preceq s\land x\preceq t]\text{ iff } x\preceq \iii\{s,t\}).
  \end{equation}

\item If $s,t\in X$ are compatible but not comparable in
$(X,\preceq)$, $v = i\{s,t\}$  and $\pi(s) = \al_1$, $\pi(t) =
\al_2$ and $\pi(v) = \be$, we have:

\begin{itemize}
\item[(a)] If $\al_1,\al_2 <\eta$, then $\be\in o(\al_1)\cap
o(\al_2)$.

\item[(b)] If $\al_1<\eta$ and $\al_2 = \eta$, then $\be\in
o(\al_1)\cap E$.

\item[(c)] If $\al_1 = \eta$ and $\al_2 < \eta$, then $\be\in
o(\al_2)\cap E$.

\item[(d)] If $\al_1 = \al_2 = \eta$, then $\be\in
F\{\xi(s),\xi(t)\}\cap E$.

\end{itemize}

\item If $s,t\in X$ with $s \preceq t$ and
$\Lambda=J({\pi}(s),{\pi}(t))$ isolates $s$ from $t$, then there
is a $u\in X$ such that $s \preceq u\preceq t$ and $\pi(u) =
{\Lambda}^+$.
\end{enumerate}

\vspace{2mm} Now, we define $\leq$ as follows:
 $\<X',\preceq',\iii'\>\leq\<X,\preceq,\iii\> $
 iff $X\subseteq X'$, $\preceq\, =\,\preceq'\cap (X\times X)$ and
$\iii\subseteq \iii'$.

 \begin{lemma}\label{Lemma_4}
 Assume that $p=\<X,\preceq,\iii\>\in  P$, $t\in X$,
${\alpha}<{\pi} (t)$ and $\nu <\kappa$. Then,  there is a
$p'=\<X',\preceq',\iii'\>\in P$ with $p'\leq p$ and
there is an $s\in X'\setminus X$ with $\pi(s) = \al$ and
$\xi(s)>\nu$ such that, for every $x\in X$, $s\preceq' x$ iff
$t\preceq' x$.
  \end{lemma}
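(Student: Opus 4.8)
This is Lemma \ref{Lemma_4}, the exact analogue of Lemma \ref{Lemma 7} from the $\kappa=\omega$ case. The goal is to take a node $t\in X$ and a level $\alpha<\pi(t)$, and adjoin a fresh point $s$ on level $\alpha$ (with arbitrarily large second coordinate $\xi(s)>\nu$) that is a ``clone'' of $t$ from the point of view of everything above it: $s\preceq' x \iff t\preceq' x$ for all $x\in X$. This is the density lemma that will be used to verify clause (4) of Definition \ref{df:s-poset} (every node has infinitely many — here $\kappa$-many — predecessors on each lower level) when passing to the generic object.

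**The approach.** The plan is to mimic the proof of Lemma \ref{Lemma 7}, but the crucial extra difficulty is condition (P5): simply putting $s$ below $t$ and the things above $t$ is no longer enough, because if $\Lambda=J(\pi(s),\pi(t))$ isolates $s$ from $t$ we are forced to also have a witness $u$ with $s\preceq u\preceq t$ at level $\Lambda^+$. Iterating this, I expect to need a whole chain of auxiliary points stepping up from level $\alpha$ toward level $\pi(t)$, one for each relevant interval boundary, exactly as the set $L$ in the proof of Lemma \ref{Lemma 7} enumerated the intermediate levels. So first I would define the finite set of levels at which new points must be created. Writing $\beta=\pi(t)$, this should be the set of values $\Lambda^+$ as $\Lambda$ ranges over the intervals $J(\gamma,\beta)$ (or the relevant $I(\alpha,m)^+$) that isolate a point on level $\alpha$ from $t$; the tree-of-intervals properties (i)--(v) in Section 3, together with Claim \ref{cl:jplussz}, guarantee this set is finite and linearly ordered, say $\alpha=\alpha_0<\alpha_1<\dots<\alpha_\ell<\beta$.

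**The construction.** Having fixed these levels, I would pick new points $s_j\in T_{\alpha_j}\setminus X$ with $\xi(s_j)>\nu$ for each $j\le\ell$; this is possible since $|X|<\kappa\le\kappa=|T_{\alpha_j}|$ (and $\kappa\le\lambda$ on the top level), and I set $s=s_0$. I then define $X'=X\cup\{s_j:j\le\ell\}$ and
\begin{equation}\notag
\prec'=\prec\cup\{(s_j,y):j\le\ell,\ t\preceq y\}\cup\{(s_j,s_k):j<k\le\ell\},
\end{equation}
so that each $s_j$ lies below $t$ and below everything above $t$, and the $s_j$ form an increasing chain witnessing (P6)/(P5) for each other. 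For the infimum function I would set $i'\{x,y\}=i\{x,y\}$ for $x,y\in X$, set $i'\{s_j,y\}=s_j$ whenever $t\preceq y$, set $i'\{s_j,s_k\}=s_{\min(j,k)}$, and set $i'\{s_j,y\}=\text{undef}$ (or $\empt$) otherwise. Because each $s_j$ is comparable with another point only when that point lies above $t$ (or is a later $s_k$), the new pairs are never ``incompatible but with a nontrivial infimum at a forbidden level,'' so condition (P4) is vacuously preserved on the new pairs.

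**The main obstacle.** The genuinely nontrivial verification will be condition (P5): I must check that for every newly created comparability $s_j\preceq' x$ (and between the $s_j$ themselves) for which $J(\pi(s_j),\pi(x))$ isolates $s_j$ from $x$, the required intermediate witness $u$ at level $J(\cdot,\cdot)^+$ is already present among the $s_k$'s or inherited from $X$ via $t$. This is precisely why the levels $\alpha_j$ were chosen to be the $\Lambda^+$ boundaries: the combinatorics of $\mathbb I_\eta$ — specifically that $J(\alpha,\beta)$ depends only on the interval structure and that the isolating intervals between $\alpha$ and $\beta$ have their right endpoints among the $\alpha_j$ — should make the chain $s_0\prec s_1\prec\dots\prec s_\ell\prec t$ self-witnessing. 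Verifying that no isolating interval is ``skipped'' is the delicate point, and I expect it to follow from property (ii) (nested intervals with limit right endpoint have strictly smaller inner right endpoint) applied to the $\Lambda^+$ enumeration. Once (P5) is confirmed, conditions (P1)--(P3) and (P6) are routine, $s=s_0$ has $\pi(s)=\alpha$ and $\xi(s)>\nu$ with $s\preceq' x\iff t\preceq' x$ by construction, and $p'=\langle X',\preceq',i'\rangle\le p$ as required.
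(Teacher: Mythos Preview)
Your proposal is correct and follows essentially the same approach as the paper: both add a fresh point $s$ at level $\alpha$ together with a finite chain of auxiliary points at the levels $I^+$ for each interval $I\in\mathbb I$ that isolates $s$ from $t$, place the entire chain below $t$ (and hence below everything $\succeq t$), note that every new point is either comparable or incompatible with every other element so that $\iii'$ and (P4) are trivial, and verify (P5) by observing that for any new $x\prec' y$ the interval $J(\pi(x),\pi(y))$ is one of the isolating $I_k$'s, whence the corresponding auxiliary point serves as witness. (Your passing reference to ``(P6)'' is a slip carried over from the $\kappa=\omega$ case; in the present poset the conditions are (P1)--(P5).)
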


\begin{proof} Since $|X|<\kappa$, we can
take an $s\in T_{\al}\setminus X$ with $\xi(s)>\nu$. Let
$\{I_0,\dots,I_n\}$ be the list of  all the intervals in $\mathbb
I$ that isolate $s$ from $t$ in such a way that $I^+_0
> I^+_1 > \dots >I^+_n$. Put $\gamma_i = I^+_i$ for $i\leq n$. We
take points $c_i\in T\setminus X$ with $\pi(c_i) = \gamma_i$ for
$i\leq n$. Let $X'=X\cup \{s\} \cup \{c_i: i\leq n\}$ and let
\begin{multline}\notag
\prec'=\prec\cup \{\<s,c_i\>: i\leq n\} \cup \{\<s,y\>: t\preceq
y\} \cup \{\<c_j,c_i\>: i < j\}
\\\cup\{\<c_i,y\>:i\leq n, t\preceq y\}.
\end{multline}

Note that, for $z\in X'$ and $y\in \{s\}\cup \{c_i: i\leq n \}$,
either $z$ and $y$ are comparable or they are incompatible with
respect to $\preceq'$. So, the definition of $i'$ is
clear.

Finally observe that  $p'$ satisfies (P5) because
if $x\prec' y$, $x\in \{s\}\cup \{c_i:i\le n\}$ and $y\in X'$ then
$J(\pi(x), \pi(y))=I_k$ for some $0\le k\le n$, so $c_k$ witnesses $(P5)$ for 
$x$ and $y$. 
\end{proof}

For $p\in P$ we write $p=\<X_p,\preceq_p, i_p\>$, $Y_p=X_p\cap
T_{<{\eta}}$ and $Z_p=X_p\cap T_{\eta}$.

\begin{lemma}\label{Lemma_5x} If $\pcal$ preserves
cardinals, then forcing with $\pcal$ adjoins a
$(\kappa,\eta,\lambda)$-Boolean algebra.
\end{lemma}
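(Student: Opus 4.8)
The plan is to mimic the proof of Lemma \ref{Lemma 8}, reading off the desired $\mf s$-poset from the generic filter and appealing to Lemma \ref{lm:Ba}. Fix a $\pcal$-generic filter $\mc G$ and write $p=\<X_p,\preceq_p,\ip\>$ for $p\in\mc G$. Set $\preceq\,=\,\un\{\preceq_p:p\in\mc G\}$ and $\iii=\un\{\ip:p\in\mc G\}$. The first routine step is a density argument showing $T=\un\{X_p:p\in\mc G\}$: for each $s\in T$ the set $\{p\in P:s\in X_p\}$ is dense, since any condition can be extended by throwing in $s$ with no new $\preceq$-relations and leaving $\iii$ undefined on the new pairs, which trivially preserves (P1)--(P5). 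Because $\mc G$ is a filter and the ordering demands $\preceq\,=\,\preceq'\cap(X\times X)$ and $\iii\subseteq\iii'$, the relations $\preceq$ and $\iii$ are coherent: any two conditions in $\mc G$ containing a given pair $\{s,t\}$ assign it the same infimum, so $\iii\{s,t\}$ is a well-defined element of $T$ or undefined.

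Next I would verify that $\<T,\preceq\>$ is an $\mf s$-poset for $\mf s=\<\ka\>_\eta\concat\<\la\>$ by checking the four clauses of Definition \ref{df:s-poset}. Clause (1) holds by construction, with $T_\al=\{\al\}\times\ka$ for $\al<\eta$ and $T_\eta=\{\eta\}\times\la$; since $\pcal$ preserves cardinals these levels retain sizes $\ka$ and $\la$. Clause (2) is immediate from (P2). For clause (3), given $\{s,t\}\in[T]^2$ take $p\in\mc G$ with $s,t\in X_p$; then $\ip\{s,t\}$ is the stable value of $\iii\{s,t\}$, and (P3) applied inside $p$ together with the agreement of $\preceq$ and $\preceq_p$ on $X_p$ shows that for every $u\in T$ one has $u\preceq s\land u\preceq t$ iff $u\preceq\iii\{s,t\}$ (with the convention that the right side fails when the infimum is undefined). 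Thus the singleton $\{\iii\{s,t\}\}$ (or $\empt$) serves as the finite set required in Definition \ref{df:s-poset}(3).

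The one point needing the auxiliary lemma is clause (4): for $\al<\be\le\eta$ and $t\in T_\be$ the set $\{s\in T_\al:s\prec t\}$ must be infinite. Here Lemma \ref{Lemma_4} plays the role that Lemma \ref{Lemma 7} plays in the proof of Lemma \ref{Lemma 8}. Given $t$, an ordinal $\al<\pi(t)$, and $\nu<\ka$, the lemma lets us extend any condition containing $t$ to one containing a fresh $s\in T_\al$ with $\xi(s)>\nu$ and the same upward $\preceq$-relations as $t$, in particular with $s\prec t$. Hence for each $\nu<\ka$ the set of conditions realizing such an $s$ is dense, and genericity produces $\ka$-many (so in particular infinitely many) distinct predecessors of $t$ in $T_\al$. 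This establishes (4), so $\<T,\preceq\>$ is an $\<\ka\>_\eta\concat\<\la\>$-poset and Lemma \ref{lm:Ba} yields a superatomic Boolean algebra with that cardinal sequence, i.e.\ a $(\ka,\eta,\la)$-Boolean algebra.

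I expect no serious obstacle inside this lemma itself: all the hard combinatorics---namely that $\pcal$ preserves cardinals, which the hypothesis hands us---is quarantined in a separate argument using $(\star)$ and conditions (P4),(P5). The only places calling for care are the coherence of the generically defined $\iii$ and the transfer of the infimum property (P3) from a single condition to all of $T$; both follow from the filter property and the fact that $\iii\subseteq\iii'$ freezes infima once they are declared.
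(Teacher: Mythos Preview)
Your proposal is correct and follows essentially the same approach as the paper: read off $\preceq$ from the generic filter, use Lemma \ref{Lemma_4} for the density needed in clause (4) of Definition \ref{df:s-poset}, and apply Lemma \ref{lm:Ba}. The paper's proof is much terser (it asserts the result in two sentences), whereas you spell out the routine verifications of clauses (1)--(3) and the coherence of $\iii$, but there is no substantive difference in method.
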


\begin{proof}
Let $\mc G$ be a $\pcal$-generic filter. Then
\begin{equation}
T=\bigcup\{X_p:p\in \mc G\},
\end{equation}
and taking
\begin{equation}
\preceq = \bigcup\{\preceq_p:p\in \mc G\}
\end{equation}
the poset $\<T,\preceq\>$ is a $\<{\kappa}\>_{\eta}\concat
\<{\lambda}\>$-poset. Especially, Lemma \ref{Lemma_4} guarantees that
$\<T,\prec\>$ satisfies (4) from Definition \ref{df:s-poset}. So, by Lemma
\ref{lm:Ba}, in $V[\mc G]$ there is a
$(\kappa,\eta,\lambda)$-Boolean algebra.
  \end{proof}

To complete our proof we should check that forcing with $P$
preserves cardinals. It is straightforward that $\pcal$ is
$\kappa$-closed. The burden of our proof is to verify the
following statement, which completes the proof of Theorem
\ref{Theorem_2}.
\begin{lemma}\label{Lemma_6}
 $\pcal$  has the $\kappa^+$-chain condition.
\end{lemma}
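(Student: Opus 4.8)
The plan is to prove the $\kappa^+$-chain condition by a standard $\Delta$-system argument, refined to exploit the strongly unbounded function $F$ precisely at the top level $T_\eta$. First I would take an arbitrary family $R=\{r_\nu:\nu<\kappa^+\}\subseteq P$ with $r_\nu=\<X_\nu,\preceq_\nu,\iii_\nu\>$, and thin it out using $\kappa^{<\kappa}=\kappa$ and the $\Delta$-system lemma (valid since $\kappa^+$ is regular and each $X_\nu\in[T]^{<\kappa}$). After thinning I want the supports $\{X_\nu\}$ to form a $\Delta$-system with root $X^*$, the level-sets $L_\nu=\pi[X_\nu]$ to form a $\Delta$-system with root $L^*$, and the conditions to be mutually isomorphic via level-preserving bijections fixing $X^*$, exactly as in conditions (A)--(E) of the proof of Lemma~\ref{Lemma9}. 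The new feature over the $\kappa=\omega$ case is that the levels below $\eta$ no longer have a countable-initial-segment structure, so here I must instead align the orbits: by further thinning I would arrange that $o(\pi(s))$ and the pattern of which $\Lambda\in\mathbb I$ isolate the various points agree across the conditions, and that $X_\nu\cap T_{<\eta}$ has the same ``shape'' relative to the tree of intervals. This is where Section~3 (orbits, the functions $j,J$, Claims~\ref{cl:epsilon} and~\ref{cl:jplussz}) does its work.

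Next I would isolate the top pieces $Z_\nu=X_\nu\cap T_\eta$ and apply property $(\star)$. Setting $\delta=\max(L^*\setminus\{\eta\})$ and $a_\nu=\{\xi(z):z\in Z_\nu\setminus X^*\}$, the sets $a_\nu\in[\lambda]^{<\kappa}$ are pairwise disjoint (they sit in distinct $\Delta$-system leaves) and there are $\kappa^+$ of them, so $(\star)$ yields $\nu<\mu$ with $F\{\xi,\xi'\}>\delta$ for all $\xi\in a_\nu$, $\xi'\in a_\mu$. I then set $p=r_\nu$, $q=r_\mu$ and build the common extension $r=\<X_r,\preceq_r,i_r\>$ with $X_r=X_p\cup X_q$, $\preceq_r=\preceq_p\cup\preceq_q$, and $i_r$ defined to agree with $i_p,i_q$ on pairs inside $X_p$ resp.\ $X_q$, while for a ``mixed'' pair $x\in X_p\setminus X_q$, $y\in X_q\setminus X_p$ I put $i_r\{x,y\}$ to be the (unique, by the infimum axiom) witness for the set $\{u\in X^*:u\prec_r x,y\}$, or $\mathrm{undef}$ if that set has no single witness. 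I must check $\preceq_r$ is a genuine partial order on $X_r$: transitivity can only fail through a mixed comparability, and this is ruled out because below a mixed pair only root elements can lie, so the structure remains antisymmetric and transitive; (P2) is immediate from $\pi$-monotonicity.

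The heart of the verification is that $r$ satisfies (P4) and (P5). For (P4) the only genuinely new case is a mixed infimum $v=i_r\{x,y\}\in X^*$: since $v\prec_r x$ and $v\prec_r y$ with $x,y$ coming from opposite sides, I use the isomorphism to transport the constraint and the orbit-alignment to see $\pi(v)\in o(\pi(x))\cap o(\pi(y))$ in cases (a)--(c); in the case $\pi(x)=\pi(y)=\eta$ I invoke precisely the inequality $F\{\xi(x),\xi(y)\}>\delta\ge\pi(v)$ supplied by $(\star)$, together with $\pi(v)\in E$, to land inside $F\{\xi(x),\xi(y)\}\cap E$. For (P5) I would verify that any isolating interval $\Lambda=J(\pi(s),\pi(t))$ for a comparable pair $s\preceq_r t$ already has its witness $u$ inside $X_p$ or $X_q$: if $s,t$ lie on the same side this is inherited; a mixed comparable pair does not occur by construction, and the root-to-mixed comparabilities are handled because the isolating interval's top lies in $L^*$, where the witness is shared. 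Verifying these two clauses carefully is the main obstacle, and it is exactly here that the combinatorics of $\mathbb I$ and the defining property $(\star)$ of $F$ must be combined; once $r\in P$ is confirmed we have $r\le p,q$, so $R$ is not an antichain and the proof is complete.
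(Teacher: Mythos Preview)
Your amalgamation strategy $X_r=X_p\cup X_q$ with $\preceq_r=\preceq_p\cup\preceq_q$ works in the $\kappa=\omega$ case of Section~2 but breaks down for general $\kappa$, and this is precisely why the paper takes a different route. The failure is in (P4)(a). Take $x\in X_p\setminus X^*$ and $y\in X_q\setminus X^*$ both at levels $<\eta$ that have a common lower bound $v\in X^*$. Then $x,y$ are compatible but incomparable in $r$, so (P4)(a) demands $\pi(i_r\{x,y\})\in o(\pi(x))\cap o(\pi(y))$. But every candidate for $i_r\{x,y\}$ lies in $X^*$, and the relation $v\prec_p x$ places no orbit constraint on $\pi(v)$ whatsoever, since (P4) only controls infima of \emph{incomparable} pairs. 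Your ``orbit alignment'' via thinning does not help: even if $o(\pi(x))$ and $o(\pi(h_{p,q}^{-1}(y)))$ have the same shape relative to $\mathbb I$, the intersection $o(\pi(x))\cap o(\pi(y))$ can miss $\pi[X^*]$ entirely. In Section~2 this problem evaporated because one could arrange $L^*\setminus\{\omega_1\}$ to be an initial segment of each $L_\nu$; no such initial-segment reduction is available when the levels below $\eta$ have size $\kappa>\omega$.

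The paper's argument is structurally different. Each $r_\nu$ is \emph{pushed down} to an isomorphic copy $r'_\nu\in P_\eta$ by collapsing $Z_\nu$ onto a single level $\varepsilon_{\zeta(\nu)}\in E$. Then Proposition~\ref{proposition_23} --- essentially the $\kappa^+$-c.c.\ argument of \cite{M} --- is invoked to amalgamate $r'_\nu$ and $r'_\mu$ inside $P_\eta$. The crucial point is that this amalgam \emph{adds new points} $V=X'\setminus(X'_\nu\cup X'_\mu)$ at specially chosen levels, precisely so as to supply infima satisfying the orbit condition; this is the content of \cite[Lemma~2.6]{M} and is what your outline is missing. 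The condition $r'$ is then \emph{pulled back} to $r\in P$ via a map $h$ that re-identifies the two copies of $\bar Z$. The strongly unbounded function is used, as you anticipate, to secure (P4)(d) for mixed top-level pairs, but the bound $\gamma_0$ against which $F$ must win has to dominate not only $\pi[\bar Y]$ but also the levels of all the newly added points (this is (R1)). So your use of $(\star)$ is correct in spirit, but it is only one of two independent ingredients; the orbit-respecting amalgamation with extra points, borrowed wholesale from \cite{M}, is the other and cannot be replaced by a bare $\Delta$-system refinement.
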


Define the subposet $\pcal_{\eta}=\<P_{\eta},\leq_{\eta}\>$ of
$\pcal$ as follows:
\begin{equation}
 P_{\eta}=\{p\in P: x_p\subseteq {\eta}\times {\kappa}\},
\end{equation}
and let $\le_{\eta}=\le\restriction P_{\eta}$. The poset
$\pcal_{\eta}$ was  defined in \cite[Definition 2.1]{M}, and it
was proved that $\pcal_{\eta}$ satisfies the ${\kappa}^+$-chain
condition. In \cite[Lemmas 2.5 and 2.6]{M} it was shown that every
set $R \in \br P_{\eta};{\kappa}^+;$ has a linked subset of size
${\kappa}^+$. Actually, a stronger statement was proved, and we
will use that statement to prove  Lemma \ref{Lemma_6}. However,
before doing so, we need some preparation.

\begin{definition}
Suppose that $g : A \to B$ is a bijection, where $A, B \in \br
T;<\kappa;$. We say that $g$ is {\em adequate} iff the following
conditions hold:
\begin{enumerate}[(1)]
\item     $g[A\cap T_{<\eta}]=B\cap T_{<\eta}$ and  $g[A\cap T_\eta]=B\cap T_\eta$.
 \item  For every $s, t \in A$, $\pi(s) < {\pi}(t)$ iff ${\pi}(g(s)) < {\pi}(g(t))$.
\item For every $s = \<\alpha, \nu\> \in  A\cap T_{<\eta}$,
$g(\alpha, \nu) = (\beta, ξ\zeta)$ implies $\nu = \zeta$. \item
For every $s,t\in A\cap T_\eta$, $\xi(s)<\xi(t)$ iff
$\xi(g(s))<\xi(g(t))$.
\end{enumerate}
\end{definition}
For  $A,B\subseteq T_{<\eta}$, this definition is just
{\cite[Definition 2.2]{M}}.

\begin{definition}
A set $Z \subseteq P$ is
{\em separated}  iff the following conditions are satisfied:
\begin{enumerate}[(1)]
 \item
 $\{X_p : p \in Z\}$ forms a $\Delta$-system with root $X$.
\item For each $\alpha<\eta$, either $X_p \cap T_\alpha = X \cap
T_\alpha$ for every $p \in Z$, or there is at most one $p \in  Z$
such that $X_p \cap  T_\alpha \neq \emptyset $. \item For every
$p, q \in  Z$ there is an adequate bijection $h_{p,q} : X_p \to
X_q$ which satisfies the following:
\begin{enumerate}[(a)]
 \item For any $s \in  X$, $h_{p,q} (s) = s$.
\item If $s, t \in  X_p$ , then $s \prec_p t$ iff $h_{p,q} (s) \prec_q h_{p,q} (t)$.
\item  If $s, t\in  X_p$, then
$h_{p,q} (\ip \{s, t\}) = \iq \{h_{p,q} (s), h_{p,q} (t)\}$.
\end{enumerate}
\end{enumerate}
\end{definition}
For  $Z\subseteq P_{\eta}$, this definition is just
{\cite[Definition 2.3]{M}}.


\begin{lemma}\label{lm:kerneldown}
Assume that $Z\in \br P;{\kappa^+}; $  is separated and $X$ is the
root of the $\Delta$-system $\{X_p : p\in Z \}$. If $s,t$ are
compatible but not comparable in $p\in Z$ and $s\in X\cap
T_{<{\eta}}$, then $\ip\{s,t\}\in X$.
\end{lemma}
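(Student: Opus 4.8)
The plan is to set $v=\ip\{s,t\}$ and to prove directly that $v$ already belongs to the root $X$. First I would record the elementary fact that, since $s$ and $t$ are compatible but not comparable in $p$, their infimum $v$ is a common lower bound with $v\neq s$ (otherwise $s=v\preceq_p t$ would make them comparable); hence $v\prec_p s$, so $\pi(v)<\pi(s)<\eta$ and in particular $v\in T_{<\eta}$. This localizes $v$ below level $\eta$, which is what lets conditions (P4) and adequacy bite.

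The main idea is to transport $v$ across the whole family $Z$ along the adequate bijections and then to collapse the copies by counting. For each $q\in Z$ I put $t_q=h_{p,q}(t)$ and $v_q=h_{p,q}(v)$. Using clause (3)(c) of separatedness together with $h_{p,q}(s)=s$ (valid because $s\in X$), I get $v_q=\iq\{s,t_q\}$, and since $h_{p,q}$ preserves $\preceq$ by (3)(b), the pair $\{s,t_q\}$ is again compatible but not comparable in $q$. Crucially, adequacy pins down the vertical coordinate: as $v\in T_{<\eta}$, clause (1) of adequacy keeps $v_q\in T_{<\eta}$ and clause (3) gives $\xi(v_q)=\xi(v)$, so $v_q=\<\pi(v_q),\xi(v)\>$ is completely determined by its level.

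Next I would invoke (P4) to bound those levels. Since $\pi(s)<\eta$, whichever of case (a) ($\pi(t_q)<\eta$) or case (b) ($\pi(t_q)=\eta$) occurs, the conclusion always yields $\pi(v_q)\in o(\pi(s))$. Because $|o(\pi(s))|\le\kappa<\kappa^+=|Z|$ and $\kappa^+$ is regular, a pigeonhole argument produces $Z_0\in\br Z;\kappa^+;$ on which $\pi(v_q)$ takes a single value $\beta_0$. Then for every $q\in Z_0$ the point $v_q=\<\beta_0,\xi(v)\>=:w$ is literally the same element of $T$, so $w\in X_q\cap X_{q'}$ for distinct $q,q'\in Z_0$; by the $\Delta$-system property (clause (1) of separatedness) this forces $w\in X$. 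To finish, I pick any $q\in Z_0$: since $w\in X$, clause (3)(a) gives $h_{p,q}(w)=w=h_{p,q}(v)$, and injectivity of $h_{p,q}$ yields $v=w\in X$.

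The step I expect to be the crux is the uniform bound $\pi(v_q)\in o(\pi(s))$: it is precisely the orbit restriction engineered into (P4) that confines the levels to a set of size at most $\kappa$, combined with the observation that adequacy freezes the second coordinate so that equal levels force \emph{literally equal points}. Only with both of these in hand does the $\kappa^+$-sized $\Delta$-system collapse an entire block of transported copies onto one element of the root. I do not expect to need clause (2) of separatedness for this lemma; clauses (1) and (3), together with (P3) and (P4), should suffice.
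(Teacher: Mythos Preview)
Your argument is correct and is essentially the paper's own proof, just phrased as a direct argument rather than by contradiction: both use (P4) to force $\pi(h_{p,q}(v))\in o(\pi(s))$, adequacy to freeze $\xi(h_{p,q}(v))=\xi(v)$, and the $\Delta$-system to conclude $v\in X$. If anything, your version is slightly more explicit about why only $\kappa$ many values of $h_{p,q}(v)$ are possible (the paper leaves the fixing of the second coordinate implicit).
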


\begin{proof} Assume that $s,t$ are compatible but not comparable in $p\in Z$ and
$s\in X\cap T_{<{\eta}}$. Assume that $\ip\{s,t\}\not\in X$. Then
since
\begin{equation}
\{\iq\{s,h_{p,q}(t)\}: q\in Z\}= \{h_{p,q}(\ip\{s,t\}): q\in Z\},
\end{equation}
the elements of $\{\iq\{s,h_{p,q}(t)\}: q\in Z\}$ are all
different. But this is impossible, because
$\pi(\iq\{s,h_{p,q}(t)\})\in o(s)$ for all $q\in Z$ and $|o(s)|\le
\kappa$.
\end{proof}

In \cite[Lemmas 2.5 and 2.6]{M}, as we explain in the Appendix of
this paper, actually the following statement was proved.

\begin{proposition}\label{proposition_23}
For each  subset $R\in \br P_{\eta};{{\kappa}^+};$ there is  a
separated subset $Z\in \br R;{{\kappa}^+};$ and an ordinal
${\gamma}<{\eta}$ such that every  $p,q\in Z$ have a common
extension $r\in P_\eta$ such that the following holds:

\begin{enumerate}[(R1)]
 \item $\sup \ {\pi}[X_r\setm(X_p\cup X_q)]<{\gamma}. $
 \item
\begin{enumerate}[(a)]
 \item
$y\prec_r s $ iff $y\prec_r h_{p,q}(s)$ for each $s\in X_p$ and
$y\in X_r\setm(X_p\cup X_q)$,\item $s\prec_r y $ iff
$h_{p,q}(s)\prec_r  y$ for each $s\in X_p$ and $y\in
X_r\setm(X_p\cup X_q)$, \item if $s\prec_r y $ for $s\in X_p\cup
X_q$ and $y\in X_r\setm(X_p\cup X_q)$, then there is a $w\in
X_p\cap X_q$ with $s\preceq_r w \prec_r y$, \item for $s\in
X_p\setm  X_q$ and $t\in X_q\setm X_p$,
\begin{gather}\label{g1}
\text{$s\prec_r t$ iff $\exists u\in X_p\cap X_q$ such that
$s\prec_p u\prec_q t$,}   \\\notag \text{$t\prec_r s$ iff $\exists
u\in X_p\cap X_q$ such that $t\prec_q u\prec_p s$.}
\end{gather}
\end{enumerate}

\end{enumerate}

\end{proposition}

After this preparation, we are ready to prove Lemma \ref{Lemma_6}.

\begin{proof}[Proof of Lemma \ref{Lemma_6}]
We will argue in the following way. Assume that $R =
\<r_{\nu}:{\nu}<{{\kappa^+}}\>\subseteq P$, where
$r_{\nu}=\<X_{\nu},\preceq_{\nu},\iii_{\nu}\>$. For each $\nu
<\ka^+$ we will ``push down'' $r_\nu$ into $P_\eta$, more
precisely, we will construct an isomorphic copy $r'_\nu\in P_\eta$
of $r_\nu$. Using Proposition \ref{proposition_23} we can find a separated
subfamily $\{r'_\nu:\nu\in K\}$ of size $\ka^+$ and  an ordinal
$\ga<\eta$ such that for each $\nu, \mu\in K$ with $\nu\ne \mu$
there is a condition $r'_{\nu,\mu}\in P_\eta$ such that
$r'_{\nu,\mu}\le_\eta r'_\nu,r'_\mu$ and (R1)--(R2) hold,
especially
\begin{equation}
 \sup {\pi}[X'_{{\nu,\mu}}\setm(X'_\nu\cup X'_\mu)]<{\gamma}.
\end{equation}
Let $X$ be the root of $\{X_{\nu}:\nu < \ka^+\}$, $Y=X\setminus
T_{\eta}$ and $\gamma_0=\max(\gamma, \mbox{ sup } \pi [Y])$. Since
$F$ is  $\ka^+$-strongly unbounded, there are $\nu, \mu\in K$ with
$\nu<\mu$ such that
\begin{equation}
 \forall s\in (X_\nu\setm X_\mu)\cap T_\eta\quad
\forall t\in (X_\mu\setm X_\nu)\cap T_\eta\quad
F\{\xi(s),\xi(t)\}>\gamma_0.
\end{equation}
Then we will be able to ``pull back'' $r'=r'_{\nu,\mu}$ into $P$
to get a condition $r=r_{\nu,\mu}$ which is a common extension of
$r_\nu$ and $r_\mu$. Let us remark that $r$ will not be an isomorphic copy of $r'$,
rather $r$ will be a ``homomorphic image`` of $r'$.

Now we carry out our plan.

Since ${\kappa}^{<{\kappa}}={\kappa}$, by thinning out our
sequence we can assume that $R$ itself is a separated set. So
$\{X_r:r\in R\}$ forms a $\Delta$-system with kernel $\bar X$. We
write $\bar Y=\bar X\cap T_{<\eta}$ and $\bar Z=\bar  X\cap
T_\eta$.

Recall that
$E=E([0,{\eta}))=\{{\varepsilon}_{\zeta}:{\zeta}<{\kappa}^+\}$
 is a closed unbounded subset of ${\eta}$.

Fix ${\nu}<{\kappa}^+$. Write $Y_{\nu}= X_{\nu}\cap T_{<\eta} $
and $Z_{\nu}= X_{\nu}\cap T_{\eta} $. Pick  a limit ordinal
${\zeta}({\nu})<{\kappa}^+$ such that:
\begin{enumerate}[(i)]
\item $\sup (\pi[Y_\nu])<{\varepsilon}_{{\zeta}({\nu})}$, \item
${\zeta}({\mu}) < {\zeta}({\nu})$ for $\mu < \nu$.
\end{enumerate}

\noindent Let $\theta = \tip(\xi[Z_{\nu}])$ and $\alpha =
\varepsilon_{\zeta(\nu)}$. We put $Z'_{\nu} = \{\<\alpha,\xi\> :
\xi < \theta \}$. Clearly, $Z'_{\nu}\subseteq
T_{\varepsilon_{\zeta(\nu)}}$ and
$\tip(\xi[Z'_{\nu}])=\tip(\xi[Z_{\nu}])$. We consider in
$Z'_{\nu}$ and $Z_{\nu}$ the well-orderings induced by $\kappa$
and $\lambda$ respectively. Put $X_{\nu}'=Y_{\nu}\cup Z'_{\nu}$,
and let $g_\nu:X'_{\nu}\to X_{\nu}$ be the natural bijection, i.e.
$g_\nu\restriction Y_{\nu}=id$ and $g_{\nu}(s) = t$ if for some
$\xi < \tip(\xi[Z_{\nu}])$ $s$ is the $\xi$-element in $Z'_{\nu}$
and $t$ is the $\xi$-element in $Z_{\nu}$.

 Let $\bar  Z'_\nu=g_\nu^{-1}\bar Z$. We define the
condition $r_{\nu}'=\<X_{\nu}',\preceq_{\nu}', \iii'_{\nu}\>\in
P_{\eta}$ as follows: for $s,t\in X'_{\nu}$ with $s\ne t$ we put

  \begin{equation}
    \text{  $s\prec'_{\nu} t$ iff $g_{\nu}(s)\prec_{\nu}
    g_{\nu}(t)$,
}
  \end{equation}
and
  \begin{equation}
    \text{  $\iii'_{\nu}\{s,t\}= \iii_{\nu}\{g_{\nu}(s), g_{\nu}(t)\}
$.
}
  \end{equation}

\medskip
\begin{claim} $r'_{\nu}\in P_{\eta}$. \end{claim}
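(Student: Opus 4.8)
The plan is to verify that the constructed triple $r'_\nu=\langle X'_\nu,\preceq'_\nu,\iii'_\nu\rangle$ satisfies each of the six defining conditions (P1)--(P5) for membership in $P$, together with the requirement $X'_\nu\subseteq\eta\times\kappa$ that places it in the subposet $P_\eta$. The guiding principle is that $g_\nu:X'_\nu\to X_\nu$ is an adequate bijection carrying the order and infimum structure of $r'_\nu$ to that of $r_\nu$; since $r_\nu\in P$, most properties transfer automatically. So the real content is to check the handful of conditions whose statements refer explicitly to the ordinal values $\pi(s)$ (namely (P4) and (P5)), since $g_\nu$ \emph{changes} those values on the top level $Z_\nu$.

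First I would dispatch the easy conditions. For (P1), note $Y_\nu\subseteq\eta\times\kappa$ and $Z'_\nu\subseteq T_{\varepsilon_{\zeta(\nu)}}$ with $\varepsilon_{\zeta(\nu)}<\eta$, so $X'_\nu\in[\eta\times\kappa]^{<\kappa}$, giving both (P1) and membership in $P_\eta$. Condition (P2) holds because $g_\nu$ preserves the relation ``$\pi(\cdot)<\pi(\cdot)$'': on $Y_\nu$ it is the identity, and on $Z'_\nu$ all points share the single ordinal $\varepsilon_{\zeta(\nu)}$ while in $Z_\nu$ all points share $\eta$, so the strict $\pi$-comparisons across the partition behave identically (using (i), $\sup\pi[Y_\nu]<\varepsilon_{\zeta(\nu)}$). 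Condition (P3) is immediate from the definition $\iii'_\nu\{s,t\}=\iii_\nu\{g_\nu(s),g_\nu(t)\}$ transported back through the order isomorphism. Condition (P5), which concerns pairs $s\preceq t$ with $J(\pi(s),\pi(t))$ isolating $s$ from $t$, should transfer because $g_\nu$ is the identity on $Y_\nu$ and the witness $u$ guaranteed in $r_\nu$ lives below $\eta$; one only checks that replacing the top value $\eta$ by $\varepsilon_{\zeta(\nu)}$ does not create any \emph{new} isolating interval $J$ for which a witness is missing, which follows from Claim~\ref{cl:jplussz} together with (i).

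The main obstacle, and the step deserving genuine care, is condition (P4), which constrains $\pi(\iii\{s,t\})$ according to which of the four cases (a)--(d) the pair falls into. For $s,t\in Y_\nu$ (case (a)) the condition is inherited verbatim from $r_\nu$ since $g_\nu$ fixes $Y_\nu$. The delicate cases are those involving top-level points: in $r'_\nu$ a pair $\{s,t\}$ with $s,t\in Z'_\nu$ has $\pi(s)=\pi(t)=\varepsilon_{\zeta(\nu)}<\eta$, so it is now governed by case (a) rather than case (d), and a pair with exactly one point in $Z'_\nu$ is governed by (a) rather than (b)/(c). Thus I must verify that the infimum value $\beta=\pi(\iii_\nu\{g_\nu(s),g_\nu(t)\})$, which in $r_\nu$ satisfied the \emph{$E$-membership} clause (it lay in $F\{\xi,\xi'\}\cap E$ or $o(\cdot)\cap E$), in fact lands in $o(\varepsilon_{\zeta(\nu)})$. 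This is exactly where Claim~\ref{cl:epsilon} is the key input: it identifies $o(\varepsilon_{\zeta(\nu)})=\{\varepsilon_\zeta:\zeta<\zeta(\nu)\}$, i.e.\ the orbit of $\varepsilon_{\zeta(\nu)}$ is precisely the initial segment of $E$ below it. Combined with condition (i), $\sup\pi[Y_\nu]<\varepsilon_{\zeta(\nu)}$, any infimum witness $v$ of a top-level pair in $r_\nu$ has $\pi(v)\in E$ and $\pi(v)<\varepsilon_{\zeta(\nu)}$, hence $\pi(v)\in o(\varepsilon_{\zeta(\nu)})$, so case (a) of (P4) is satisfied for $r'_\nu$. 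I would therefore organize the verification of (P4) as a short case analysis keyed to how many of $s,t$ lie in $Z'_\nu$, invoking Claim~\ref{cl:epsilon} to convert the old $E$-membership into the new orbit-membership, and this completes the proof that $r'_\nu\in P_\eta$.
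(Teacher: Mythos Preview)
Your proposal is correct and follows essentially the same approach as the paper: dispatch (P1)--(P3) via the isomorphism $g_\nu$, then handle (P4) by the case analysis on how many of $s,t$ lie in $Z'_\nu$, using Claim~\ref{cl:epsilon} to convert the $E$-membership coming from (P4)(c)/(d) in $r_\nu$ into the orbit-membership required by (P4)(a) in $r'_\nu$, and finally treat (P5) via Claim~\ref{cl:jplussz} to see that $J(\pi(s'),\varepsilon_{\zeta(\nu)})=J(\pi(s'),\eta)$. The paper's write-up is slightly more explicit in separating the two subcases of (P4) (one versus two points in $Z'_\nu$) and in noting that the (P5)-witness $v$ found in $r_\nu$ automatically lies in $Y_\nu$ (since $\Lambda^+<\varepsilon_{\zeta(\nu)}$), but your outline contains all the same ingredients.
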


\begin{proof}[Proof]
(P1), (P2) and (P3) are clear because
$g_{\nu}$ is an isomorphism between
$r'_\nu=\<X'_{\nu},\preceq'_{\nu},\iii'_{\nu}\>$
and $r_\nu=\<X_{\nu},\preceq_{\nu},\iii_{\nu}\>$,
moreover ${\pi}(s)<{\pi}(t)$
iff ${\pi}(g_{\nu}(s))<{\pi}(g_{\nu}(t))$.

\smallskip
\noindent (P4) Since $X'_\nu\subseteq T_{<\eta}$  we should check
just (a). So assume that $s',t'\in X'_{\nu}$ are compatible but
not comparable in $\<X'_{\nu},\leq'_{\nu}\>$ and
$v'=\iii'_{\nu}\{s',t'\}$. Put $s = g_{\nu}(s')$, $t =
g_{\nu}(t')$. Since $g_\nu\restriction Y_\nu=id$, we can assume
that $\{s',t'\}\notin \br Y_\nu;2;$, e.g. $s'\in Z'_\nu$ and so
$s\in Z_\nu$.

First observe that $v'\in  Y_{\nu}$, so $v'=g_\nu(v')$.

If $t'\in Y_{\nu}$, then $t'=g_\nu(t')$, and
$v'=\iii_{\nu}\{s,t'\}$.  By applying (P4)(c) in $r_{\nu}$ for $s$
and $t'$ we obtain
\begin{equation}
{\pi}(v')\in E\cap o({\pi}(t'))\subseteq E\cap
\varepsilon_{\zeta(\nu)}\cap o({\pi}(t'))=  o({\pi}(s'))\cap
o({\pi}(t'))
\end{equation}
because $o(\pi(s'))=E\cap \varepsilon_{\zeta(\nu)}$
by Claim \ref{cl:epsilon}.

If $t'\in Z'_{\nu}$, then $t=g_{\nu}(t')\in Z_{\nu}\subseteq
T_{\eta}$. Since $v' = i'_{\nu}\{s',t'\} = i_{\nu}\{s,t\}$,
applying (P4)(d) in $r_{\nu}$ for $s$ and $t$ we obtain
\begin{equation}\notag
{\pi}(v')\in F\{{\xi}(s), {\xi}(t)\}\cap E\cap
\varepsilon_{\zeta(\nu)} \subseteq E\cap \varepsilon_{\zeta(\nu)}
= o({\pi}(s'))\cap o({\pi}(t'))
\end{equation}
because $o(\pi(s'))=o(\pi(t'))=E\cap \varepsilon_{\zeta(\nu)}$
by Claim \ref{cl:epsilon}.

\smallskip
\noindent (P5) Assume that $s',t'\in X'_{\nu}$, $s'\prec'_\nu t'$
and $\Lambda=J({\pi}(s'), {\pi}(t'))$ isolates $s'$ from $t'$.
Then $s'\in Y_\nu$, so $g_{\nu}(s')=s'$. Since $g_\nu\restriction
Y_\nu=id$, we can assume that $\{s',t'\}\notin \br Y_\nu;2;$, i.e.
$t'\in Z'_\nu$.

Write $t=g_\nu(t')$. Since
$\pi(t')={\varepsilon}_{{\zeta}({\nu})}\in E$ , by Claim
\ref{cl:jplussz},
$J(\pi(s'),\pi(t'))=J(\pi(s'),\pi(t))=[\varepsilon_{\zeta},
\varepsilon_{\zeta+1} )=I({\pi}(s'),1)$, where
$\varepsilon_{\zeta}\le \pi(s')< \varepsilon_{\zeta+1}$.
Applying (P5) in $r_{\nu}$ for $s'$ and $t$ we obtain a $v\in
Y_{\nu}$ such that $\pi(v)=\Lambda^+$ and $s'\prec_{\nu}v
\prec_{\nu} t$. Then $g_{\nu}(v)=v$, so $s'\prec_{\nu}'
v\prec_{\nu}' t'$, which was to be proved.
\end{proof}

Now applying Proposition 23 to the family
$\{r'_{\nu}:{\nu}<{\kappa}^+\}$, there are $K\in \br {\kappa}^+;
{\kappa}^+;$ and ${\gamma}<{\eta}$ such that $\{r'_{\nu}:{\nu}\in
K\}$ is separated and  for every ${\nu}, {\mu}\in K$ with
${\nu}\ne{\mu}$ there is a common extension $r'\in P_{\eta}$ of
$r'_\nu$ and $r'_\mu$ such that (R1)-(R2) hold. Let
$\gamma_0=\max(\gamma, \mbox{ sup } \pi [\bar Y])$. Recall that
$\bar Y$ is the root of the $\Delta$-system $\{Y_\nu:\nu\in
\ka^+\}$. For $\nu < \mu < \kappa^+$ we denote by $h'_{\nu,\mu}$
the adequate bijection $h_{r'_{\nu},r'_{\mu}}$.

Since $F$ satisfies $(\star)$, there are ${\nu}, {\mu}\in K$ with
${\nu}\ne{\mu}$  such that for each $s\in (Z_{\nu}\setm Z_{\mu})$
and $t\in (Z_{\mu}\setm Z_{\nu})$ we have
\begin{equation}
F\{\xi(s),\xi(t)\}>{\gamma}_0.
\end{equation}

We show that the conditions $r_\nu$ and $r_\mu$ have a common
extension $r=\<X,\preceq,\iii\>\in P$.

Consider a condition $r'=\<X',\preceq',\iii'\>$ which is a common
extension of $r'_\nu$ and $r'_\mu$ and satisfies (R1)--(R2). We
define the condition $r = \<X,\preceq,\iii\>$  as follows. Let
\begin{equation}
X=(X'\setm (Z'_{\nu}\cup Z_{\mu}'))\cup (Z_{\nu}\cup Z_{\mu}).
\end{equation}
Write $U=X'\setm (Z'_{\nu}\cup Z_{\mu}') = X\setm (Z_{\nu}\cup
Z_{\mu}) $ and $V=X'\setm (X'_{\nu}\cup X_{\mu}')$. Clearly,
$V\subseteq U$. We define the function $h:X'\to X$ as follows:
\begin{equation}
 h=g_\nu\cup g_\mu \cup (id\restriction U).
\end{equation}
Then $h$ is well-defined, $h$ is onto, $h\restriction X'\setm
(\bar Z'_\nu\cup \bar Z'_\mu)$ is injective, and $h[\bar
Z'_\nu]=h[\bar Z'_\mu]=\bar Z$.

Now, if $s,t\in X$ we put

\begin{equation}\label{eq:prec}
s\prec t \mbox{ iff there is a } t'\in X' \mbox{ with } h(t')=t
\mbox{ and } s \prec' t'.
\end{equation}




Finally, we define the meet function $\iii$ on $\br X;2;$ as
follows:
\begin{equation}\label{eq:0}
\iii\{s,t\}=\max_{\prec'}\{\iii'\{s',t'\}: \text{  $h(s')=s$ and $h(t')=t$} \}.
\end{equation}
We will prove in the following claim that the definition of the
function  $\iii$ is meaningful. Then the proof of Lemma
\ref{Lemma_6} will be complete as soon as we verify  that $r\in P$
and $r\le r_\nu, r_\mu$.

\begin{claim}\label{cl:wd}
$\iii$ is well-defined by (\ref{eq:0}), moreover $\iii\supseteq
\iii_\nu\cup \iii_\mu$.
\end{claim}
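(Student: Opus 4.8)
The plan is to reduce the statement to an analysis of the fibres of $h$ and of the at most four candidate infima appearing on the right of (\ref{eq:0}). First I would record the fibre structure of $h=g_\nu\cup g_\mu\cup(\mathrm{id}\restriction U)$: since the two top copies $Z'_\nu$ and $Z_\mu'$ sit at the distinct levels $\varepsilon_{\zeta(\nu)}\ne\varepsilon_{\zeta(\mu)}$, the map $h$ is injective off $\bar Z'_\nu\cup\bar Z'_\mu$ and is exactly two--to--one over $\bar Z$, with $h^{-1}(s)=\{g_\nu^{-1}(s),g_\mu^{-1}(s)\}$ for $s\in\bar Z$, the two lifts corresponding under the adequate bijection $h'_{\nu,\mu}$. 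Next I would observe that the top copies $Z'_\nu\cup Z'_\mu$ are $\prec'$--maximal in $r'$: each $Z'_\kappa$ is maximal in $r'_\kappa$, and (R2)(c) and (d) forbid a top copy from lying below a new element or below an element of the other side. Consequently every value $\iii'\{s',t'\}$ lies in $U$, on which $h$ is the identity, so the right--hand side of (\ref{eq:0}) automatically lands in $X$, and ``$\iii$ is well defined'' reduces to showing that for each $\{s,t\}\in\br X;2;$ the candidate set $C\{s,t\}=\{\iii'\{s',t'\}:h(s')=s,\ h(t')=t\}$, which is nonempty and has size $\le 4$, possesses a $\prec'$--maximum.

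Then I would dispose of the routine cases and settle the ``moreover'' clause on the root. If neither $s$ nor $t$ lies in $\bar Z$ the lift is unique; when in addition $\{s,t\}\in\br X_\nu;2;$ it is the natural $\nu$--lift, and since $r'\le_\eta r'_\nu$ and $g_\nu$ is an isomorphism we get $\iii\{s,t\}=\iii_\nu\{s,t\}$, symmetrically for $\mu$. For these to be compatible on $\br\bar X;2;$ I must check that $\iii_\nu$ and $\iii_\mu$ agree there: for a root pair $\{s,t\}$ the identity $\iii_\kappa\{s,t\}=h_{\nu,\kappa}(\iii_\nu\{s,t\})$ together with the cardinality estimate behind Lemma \ref{lm:kerneldown} (by (P4) the $\pi$--values of these infima lie in a set of size $\le\kappa$, while non--root adequate images would be $\kappa^+$ many distinct points) forces $\iii_\nu\{s,t\}\in\bar X$; since $h_{\nu,\mu}$ is the identity on $\bar X$ this gives $\iii_\nu\{s,t\}=\iii_\mu\{s,t\}$, a common value I call $v_0$.

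The substance is the case where $s$ or $t$ lies in $\bar Z$, producing cross candidates in which a top--copy twin from one side meets an element of the other. The mechanism I would use is twofold. First, a localization: an infimum $\iii'\{a,b\}$ with $a\in X'_\nu\setm X'_\mu$ and $b\in X'_\mu\setm X'_\nu$ lies in the common part $X'_\nu\cap X'_\mu$; this is a kernel--type statement that I would extract from (R2)(d), using that comparabilities across the two sides factor through $X'_\nu\cap X'_\mu$ and that the top copies are maximal. Second, since $h'_{\nu,\mu}$ is the identity on $X'_\nu\cap X'_\mu$, a point of the common part stands in the same $\prec'$--relation to a twin pair $s'_\nu,\ s'_\mu=h'_{\nu,\mu}(s'_\nu)$. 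Combining the two, every cross candidate is a common--part element that is $\preceq'$ the corresponding same--side candidate; in the doubly--root case all four candidates collapse to $v_0$. Hence the $\prec'$--maximum exists and equals the natural--side value, which is $\iii_\nu\{s,t\}$ on $\br X_\nu;2;$ (respectively $\iii_\mu\{s,t\}$ on $\br X_\mu;2;$), proving simultaneously that $\iii$ is well defined and that $\iii\supseteq\iii_\nu\cup\iii_\mu$.

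The hard part will be the localization step, i.e. proving that the amalgam $r'$ creates no common lower bound of a $\nu$--only and a $\mu$--only element outside $X'_\nu\cap X'_\mu$; I expect to lean on (R2)(d) and on the maximality of the top copies established in the first paragraph, the most delicate subcase being $s,t\in\bar Z$, where I must verify at once that the four candidates are $\prec'$--comparable and that they reduce to the single root value $v_0$.
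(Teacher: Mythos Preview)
Your overall strategy matches the paper's: reduce to the small candidate set $C\{s,t\}$ and show that the same--side candidate dominates the cross candidates. The maximality of the top copies, the unique--lift case, and your treatment of the root agreement $\iii_\nu\restriction\br\bar X;2;=\iii_\mu\restriction\br\bar X;2;$ are all fine.

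The gap is in your localization step. The claim that a cross infimum $\iii'\{a,b\}$ with $a\in X'_\nu\setminus X'_\mu$ and $b\in X'_\mu\setminus X'_\nu$ must lie in $X'_\nu\cap X'_\mu=\bar Y$ is false: it can lie in $V=X'\setminus(X'_\nu\cup X'_\mu)$. Condition (R2)(d) only tells you that \emph{comparabilities} between the two sides factor through $\bar Y$; it says nothing about new common lower bounds manufactured in $V$, and the amalgam $r'$ does create such bounds. The paper therefore splits on where $v=\iii'\{s'',t'\}$ lives: when $v\in V$ one uses (R2)(a), not (R2)(d), to get the twin symmetry $v\prec' s''\Leftrightarrow v\prec' s'$ and hence $v\preceq'\iii'\{s',t'\}$; when $v\in X'_\nu\cup X'_\mu$ one first factors $v\prec' s''$ through some $w\in\bar Y$ via (R2)(d) and then applies Lemma~\ref{lm:kerneldown} (to $w$ and $t'$) to force $v\in\bar Y$. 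So the correct localization is $v\in\bar Y\cup V$, and the twin symmetry you need holds on both pieces for different reasons.

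A related omission is the subcase $s\in\bar Z$, $t\in V$: here neither candidate $\iii'\{s',t\}$, $\iii'\{s'',t\}$ fits your ``one side meets the other'' template, since $t$ belongs to neither side. The paper treats this separately (its Case~1), using (R2)(c) to factor $v\prec' t$ through $\bar Y$ when $v\in X'_\nu\cup X'_\mu$, and (R2)(a) when $v\in V$; in this case the two candidates turn out to be equal. Once you add the $V$--branch via (R2)(a) and cover $t\in V$, your argument goes through and coincides with the paper's.
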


\begin{proof}[Proof]
We need to verify that the maximum in (\ref{eq:0}) does exist when
we define $\iii\{s,t\}$. So, suppose that $\{s,t\}\in [X]^2$.

If $\{s,t\}\in \br X\setm {\bar Z};2;$ then there is exactly one
pair $(s',t')$ such that $h(s')=s$ and $h(t')=t$, and hence there
is no problem in (\ref{eq:0}). So if $\{s,t\}\in \br X_\nu;2;$
then $\iii\{s,t\}=\iii'\{s',t'\}= \iii_\nu\{s,t\}$ by the
construction of $r'_\nu$. If  $\{s,t\}\in \br X_\mu;2;$
proceeding similarly we obtain
$\iii\{s,t\}=\iii'\{s',t'\}=\iii_\mu\{s,t\}$.

So we can assume that e.g. $s\in {\bar Z}$. Then
$h^{-1}(s)=\{s',s''\}$ for some $s'\in {\bar Z}'_{\nu}$ and
$s''\in {\bar Z}'_\mu $.

First assume that  $t\notin {\bar Z}$, so there is exactly one
$t'\in X'$ with $h(t')=t$. We distinguish the following cases.

\medskip\noindent{\em Case 1.   $t\in V $}.

Note that since $t\in V$,  $t=t'$. We show that
$\iii'\{s',t\}=\iii'\{s'',t\}$.

Let $v=\iii'\{s',t\}$. Assume that $v\in X'_\nu\cup X'_\mu$. Then,
by (R2)(c), $v\prec' t$ and $t\in V$ imply that there is a $w\in
\bar Y=X'_\nu\cap X'_\mu$ such that $v\preceq' w \prec' t$. Thus
$v=\iii'\{s',w\}$ and $\iii'\{s',w\}=\iii'_\nu\{s',w\}
=\iii_\nu\{s,w\}\in \bar Y$ by Lemma \ref{lm:kerneldown} for $w\in
\bar Y$. Clearly, $v \prec' t,s''$. Hence $v\preceq'
\iii'\{s'',t\}$.

Now assume that $v\in V$. Then $v\prec' s'$ implies $v\prec'
h'_{\nu, \mu}(s')=s''$ by (R2)(a). So $v\prec' t,s''$, thus
$\iii'\{s',t\}\preceq'\iii'\{s'',t\}$.

So, in both cases $\iii'\{s',t\}\preceq'\iii'\{s'',t\}$.
But $s'$ and $s''$ are
symmetrical, hence $\iii'\{s'',t\}\preceq'\iii'\{s',t\}$, and  so
we are done.

\medskip\noindent{\em Case 2.   $t\in X_\nu\setm \bar Z $}.

We show that in this case $\iii'\{s'',t'\}\preceq'
\iii'\{s',t'\}$.

Let $v=\iii'\{s'',t'\}$.
If $v\in V$, then $v\prec' s''$
and $h'_{\nu, \mu}(s')=s''$ imply $v\prec' s'$ by (R2)(a).
Thus $v\preceq' t',s'$, and so  $v\preceq'\iii'\{s',t'\}$.

Now assume that $v\in X'_\nu\cup X'_\mu$. If $v\in \bar
Y=X'_\nu\cap X'_\mu$, then $v\prec' s'$, so
$v\prec'\iii'\{s',t'\}$.  We show that it is not possible that
$v\notin \bar Y$. For this, assume that $v\in (X'_\nu \cup
X'_\mu)\setm \bar Y$. Without loss of generality, we may suppose
that $v\in X'_\nu\setm X'_\mu$. Then, by (R2)(d), there is a $w\in
\bar Y$ such that $v\prec' w\prec' s''$. Thus $v=\iii
'\{w,t'\}=\iii'_\nu\{w,t'\}\in  \bar Y$ by Lemma
\ref{lm:kerneldown}.

Moreover, $\{s,t\}\in \br X_\nu;2;$ and
$\iii\{s,t\}=\iii'\{s',t'\}=\iii_\nu\{s,t\}$ because
$g_\nu(s')=h(s')=s$ and $g_\nu(t')=h(t')=t$.

\medskip\noindent{\em Case 3.   $t\in X_\mu\setm {\bar Z}$}.

Proceeding as in Case 2, we can show that $\iii'\{s',t'\}\preceq'
\iii'\{s'',t'\}=\iii_\mu\{s,t\}$.

\smallskip

Finally, assume that  $t\in {\bar Z}$. Then $h^{-1}(t)=\{t',t''\}$
for some $t'\in \bar Z'_{\nu}$ and   $t''\in \bar Z'_\mu $.

Note that by Cases (2) and (3),
\begin{equation}\notag
\iii'\{s'',t'\}\preceq' \iii'\{s',t'\} \text{ and }
\iii'\{s',t''\}\preceq' \iii'\{s'',t''\}.
\end{equation}
Since $\iii'\{s',t'\}=\iii_{\nu}\{s,t\} =
\iii_{\mu}\{s,t\}= \iii'\{s'',t''\}$ by the construction of $r'_\nu$ and $r'_\mu$,
we have
\begin{equation}\label{eq:max2}
\iii'\{s',t'\}=\iii'\{s'',t''\}=\max_{\prec'}(\iii'\{s',t'\},\iii'\{s'',t'\} ,
\iii'\{s',t''\},\iii'\{s'',t''\} ).
\end{equation}
Moreover, in this case $\{s,t\}\in \br X_\nu;2;\cap \br X_\mu;2;$
and we have just proved that
$\iii\{s,t\}=\iii_\nu\{s,t\}=\iii_\mu\{s,t\}$.

\end{proof}

By Claim \ref{cl:wd} above, $r$ is well-defined.
%
Since $\iii\supseteq \iii_\nu\cup \iii_\mu$,
it is easy to check that if $r\in P$ then $r\leq r_{\nu},r_{\mu}$.
So, the following claim completes the verification of the  chain
condition.

\begin{claim}
$r\in P$.
\end{claim}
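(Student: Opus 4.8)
The plan is to verify that the object $r = \langle X, \preceq, \iii\rangle$ constructed above satisfies each of the defining conditions (P1)--(P5). Conditions (P1) and (P2) should be essentially immediate: $X$ is a union of pieces of size $<\kappa$ (so it lies in $[T]^{<\kappa}$), and $\preceq$ is a partial order with the right monotonicity in $\pi$, because $\preceq'$ has this property and $h$ preserves the partition into levels (being built from the level-preserving maps $g_\nu, g_\mu$ and the identity on $U$). I would first check that $\prec$ as defined in (\ref{eq:prec}) is genuinely a strict partial order --- transitivity is the point to watch, since $\prec$ is defined by existential quantification over preimages under $h$; here the key leverage is Claim \ref{cl:wd}, which already tells us $\iii \supseteq \iii_\nu \cup \iii_\mu$ and that the meets of pairs inside $\bar Z$ agree across the two copies.

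Next I would verify (P3), that $\iii$ is an infimum function, i.e. that for every $\{s,t\} \in [X]^2$ and every $x \in X$ we have $x \preceq s \land x \preceq t$ iff $x \preceq \iii\{s,t\}$. The ``if'' direction follows once we know $\iii\{s,t\} \preceq s,t$, which should come from the definition of the maximum in (\ref{eq:0}) together with $\iii'$ being an infimum function upstairs. The ``only if'' direction is the substantive one: given $x \preceq s$ and $x \preceq t$ in $X$, I would lift $x$ to some $x' \in X'$ with $h(x') = x$, and lift $s,t$ to appropriate preimages $s', t'$, use that $\iii'$ is an infimum function to conclude $x' \preceq' \iii'\{s',t'\}$, and then push the conclusion back down through $h$. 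The delicate part is that $x$ may have two preimages (when $x \in \bar Z$) and one must choose the preimages of $s,t$ compatibly with the one witnessing $x' \preceq' \iii'\{s',t'\}$; here the case analysis from the proof of Claim \ref{cl:wd} (Cases 1--3), showing that the various candidate meets $\iii'\{s',t'\}, \iii'\{s'',t'\}, \ldots$ are $\preceq'$-comparable and the maximum is attained, is exactly what makes the descent well-defined.

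I would then check (P4). Since the new meets $\iii\{s,t\}$ are of the form $h(v)$ for some meet $v = \iii'\{s',t'\}$ upstairs, and $h$ restricted to $T_{<\eta}$ comes from $g_\nu \cup g_\mu$ which preserves $\pi$ on the $Y$-parts, the orbit conditions (a)--(c) should transfer directly from the fact that $r'_\nu, r'_\mu$ satisfy (P4). The genuinely new content is clause (d): when $s,t \in Z_\nu \cup Z_\mu$ lie at level $\eta$ and are incomparable with $\pi(\iii\{s,t\}) = \beta$, I must show $\beta \in F\{\xi(s),\xi(t)\} \cap E$. This is precisely where the choice of $\nu, \mu \in K$ with $F\{\xi(s),\xi(t)\} > \gamma_0$ for all $s \in Z_\nu \setminus Z_\mu$, $t \in Z_\mu \setminus Z_\nu$ is used: the meet $v$ lives below level $\gamma_0$ (by (R1) and the bound on $\pi[\bar Y]$), hence $\pi(v) < \gamma_0 < F\{\xi(s),\xi(t)\}$, and one argues $\pi(v) \in E$ from the upstairs condition (P4) on the copies. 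Finally (P5) should follow by lifting the pair $s \preceq t$ and the isolating interval $\Lambda = J(\pi(s),\pi(t))$ to $X'$, applying (P5) for $r'$ to obtain an intermediate $u'$ at level $\Lambda^+$, and projecting back via $h$, noting that $h$ fixes the relevant level since $\Lambda^+ < \eta$.

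I expect the main obstacle to be (P3), specifically the ``only if'' direction in the case where both $s$ and $t$ lie in $\bar Z$ and thus each has two preimages under $h$: one must show that no spurious lower bound is created by the collapsing $h[\bar Z'_\nu] = h[\bar Z'_\mu] = \bar Z$, i.e. that whenever $x$ is a common lower bound of $s,t$ downstairs, there really is a single coherent lift realizing it as $\preceq' \iii'\{s',t'\}$ for the maximizing pair. The equation (\ref{eq:max2}) --- asserting that the four candidate meets have a $\prec'$-maximum realized by the diagonal pairs --- is the technical heart, and confirming that the infimum property survives the amalgamation will require carefully invoking Lemma \ref{lm:kerneldown} (to force meets involving kernel points back into $\bar Y$) together with the transfer properties (R2)(a)--(d).
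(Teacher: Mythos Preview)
Your overall approach matches the paper's: verify (P1)--(P5) in turn, with (P4)(d) being the place where the choice of $\nu,\mu$ via the strongly unbounded function is cashed in. Two points deserve correction.

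First, you misidentify the difficulty of (P3). Your concern that a common lower bound $x$ of $s,t$ ``may have two preimages (when $x\in\bar Z$)'' cannot arise: if $s,t$ are compatible but not comparable, any common lower bound $v$ satisfies $v\prec s,t$ strictly, hence $\pi(v)<\eta$, hence $v\in U$ and $h(v)=v$ is the unique preimage. The paper's argument is then one line: from $v\prec s$ and $v\prec t$ (via (\ref{eq:prec})) there exist $s',t'\in X'$ with $h(s')=s$, $h(t')=t$, and $v\prec' s',t'$; so $v\preceq'\iii'\{s',t'\}\preceq'\iii\{s,t\}$ by (P3) for $r'$ and the definition (\ref{eq:0}) of $\iii\{s,t\}$ as a $\prec'$-maximum. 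No coherent choice of lifts is needed, and the elaborate case analysis from Claim~\ref{cl:wd} need not be repeated here.

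Second, for (P4)(d) your justification that $\pi(v)<\gamma_0$ is incomplete. If $v=\iii'\{s',t'\}\in V$ then (R1) gives $\pi(v)<\gamma\le\gamma_0$, as you say. But if $v\in U\setminus V\subseteq X'_\nu\cup X'_\mu$, (R1) says nothing; here the paper invokes (R2)(d): since $v\prec' t'$ with $v\in X'_\nu\setminus X'_\mu$ (say) and $t'\in X'_\mu\setminus X'_\nu$, there is $w\in X'_\nu\cap X'_\mu=\bar Y$ with $v\preceq' w\prec' t'$, whence $\pi(v)\le\sup\pi[\bar Y]\le\gamma_0$. You should add this step. The remainder of your (P4)(d) sketch (deducing $\pi(v)\in E$ from (P4)(a) for $r'$ via Claim~\ref{cl:epsilon}, then $\pi(v)<\gamma_0<F\{\xi(s),\xi(t)\}$) is correct.
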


\begin{proof}[Proof]
(P1) and  (P2)  are clear.

\smallskip
\noindent (P3) Assume that $\{s,t\}\in [X]^2$. Without loss of
generality, we may assume that $s,t$ are compatible but not
comparable in $\<X,\preceq\>$. Note that by (\ref{eq:prec}), (\ref{eq:0}) and
condition (P3) for $r'$, we have $\iii\{s,t\}\prec s,t$. So, we
have to show that if $v\prec s, t$ then $v\preceq \iii\{s,t\}$.

Assume that $v\prec s,t$. Then, $v\in U$ and there are $s',t'\in
X'$ such that $h(s')=s$, $h(t')=t$ and $v\prec's',t'$. By (P3) for
$r'$, $v\preceq'\iii'\{s',t'\}$. Now as $v,\iii'\{s',t'\},
\iii\{s,t\}\in U$ and $h\restriction U = id$, we infer from
(\ref{eq:0}) that $v \preceq'\iii'\{s',t'\} \preceq'\iii\{s,t\}$
and hence $v\preceq \iii\{s,t\}$.

%
%
%
%
%
%

\medskip
\noindent (P4) Assume that $s,t\in X$ are compatible but not
comparable in $\<X,\preceq\>$. Let $v=\iii\{s,t\}$.

\smallskip
\noindent (a) In this case ${\pi}(s), {\pi}(t)<{\eta}.$ Then
$s,t\in X\setm (Z_{\nu}\cup Z_{\mu})=U$, so $h(s)=s$ and $h(t)=t$.
Thus $\iii\{s,t\}=\iii'\{s,t\}$. Hence, it follows from condition
(P4)(a) for $r'$ that  ${\pi}(\iii\{s,t\})\in o(s)\cap o(t)$.

\smallskip
\noindent (b) In this case ${\pi}(s)<{\eta}$ and
${\pi}(t)={\eta}$. Then $s\in X\setm (Z_{\nu}\cup Z_{\mu})=U$ and
$t\in Z_{\nu}\cup Z_{\mu}$.

By (\ref{eq:0}) and Claim \ref{cl:wd}, there is a $t^*\in
Z'_\nu\cup Z_\mu'$ such that $h(t^*)=t$ and
$\iii\{s,t\}=\iii'\{s,t^*\}$.

Now, applying (P4)(a) for $r'$, we infer that
$\pi(v)\in o(s)\cap o(t^*)$. Since $\pi(t^*)\in E$, we have
$o(t^*)\subseteq E$ by Claim \ref{cl:epsilon}. Then
we deduce that $\pi(v)\in o(s)\cap E$, which was to be proved.


\smallskip
\noindent (c) The same as (b).

\smallskip
\noindent (d)
 In this case ${\pi}(s)={\pi}(t)={\eta}$.
If $\{s,t\}\in \br Z_\nu;2;$ then $\iii\{s,t\}=\iii_\nu\{s,t\}$,
and
%
by (P4)(d) for $r_{\nu}$, we deduce that
$\pi(\iii\{s,t\})\in F\{\xi(s),\xi(t)\}$. A parallel argument
works if $s,t\in Z_{\mu}$.

So we can assume that $s\in Z_{\nu}\setm Z_\mu$ and $t\in
Z_{\mu}\setm Z_\nu$. Note that there are a unique $s'\in Z_\nu'$ with
$h(s') = s$ and a unique $t'\in Z_\mu'$ with $h(t') = t$. Then,
$v=\iii\{s,t\} = \iii'\{s', t'\}\in U$. Hence either $v\in V$, or
$v\in X_{\nu}\cup X_{\mu}$ and in this case there is a $w\in
X_{\nu}\cap X_{\mu}$ with $v\prec' w$ by (R2)(d).

In both cases ${\pi}(v)<{\gamma_0}$. Note that, applying (P4)(a)
in $r'$ for $s'$, $t'$ and $v=\iii'\{s',t'\}$, we obtain
${\pi}(v)\in o(s')\cap o(t')$. Since ${\pi}(s'), {\pi}(t')\in E$
we have $o(s')\cup o(t')\subseteq E$ by Claim \ref{cl:epsilon}.
Thus ${\pi}(v)\in  E $. And since ${\pi}(v)<{\gamma_0}$, we have
${\pi}(v)\in F\{\xi(s),\xi(t)\}\cap E$, which was to be proved.

\smallskip
\noindent (P5) Assume that $s,t\in X$, $s\prec t$ and
$\Lambda=J({\pi}(s), {\pi}(t))$ isolates $s$ from $t$. Then
$s\notin T_{\eta}$, so $h(s)=s$.

If $t\notin T_{\eta}$ then $h(t)=t$, so we are done because
$r'$ satisfies (P5).

Assume that  $t\in T_{\eta}$. As $s\prec t$, there is a $t'\in
T_{{\varepsilon}_{{\zeta}({\nu})}}\cup
T_{{\varepsilon}_{{\zeta}({\mu})}}$ such that $h(t') = t$ and
$s\prec' t'$. Since $\pi(t')\in E$,  by Claim \ref{cl:jplussz} we
have $J(\pi(s),\pi(t'))=I({\pi}(s),1)=J(\pi(s),\pi(t))$. Applying
(P5) in $r'$ for $s$ and $t'$, we obtain a $v \in X'$ such that
$s\prec' v \preceq 't'$ and $\pi(v') = \Lambda^+$. But as
$\zeta(\nu),\zeta(\mu)$ are limit ordinals, we have $v \prec' t'$,
and hence $v\in X'\setm (Z'_{\nu}\cup Z'_{\mu})=U$. Then $h(v)=v$,
so $s\prec v\prec t$, which was to be proved.
\end{proof}

Hence we have proved that $\pcal$ satisfies the $\ka^+$-chain
condition, which completes the proof of Theorem \ref{Theorem_2}.
\end{proof}

\section{Appendix}
We explain in detail how  Proposition 23 was proved in \cite{M}.

Assume that $Z\subseteq P_{\eta}$ is a separated set. Let $\bar X$
be the root of $\{X_p:p\in Z\}$. For every $n\in \omega$ and every
$I\in \ical_n$ with $\cf({I^+})= \kappa^+$, we define $\xi(I) =$
the least ordinal $\gamma$ such that ${\varepsilon}^I_{\gamma}
\supseteq \pi[\bar X]\cap I$ and we put $\gamma(I) =
{\varepsilon}^I_{\xi(I) + \kappa}$. Now for every $\alpha < \eta$,
if there is an $n < \omega$ and an interval $I\in \ical_n$ with
$\cf({I^+})= \kappa^+$ such that $\alpha \in I$ and $\gamma(I)\leq
\alpha$, we consider the least natural number $k$ with this
property and write $I(\alpha) = I(\alpha,k)$. Otherwise, we write
$I(\alpha) = \{\alpha\}$. Then we say that $Z$ is {\em pairwise
equivalent} iff for every $p,q\in Z$ and every $s\in X_p$,
$I(\pi(s)) = I(\pi(h_{p,q}(s)))$. In \cite{M}, the following two
lemmas were proved:


\begin{lemma}[{\cite[Lemma 2.5]{M}}]
Every set in $\br P_{\eta};{{\kappa}^+};$
has a pairwise equivalent subset of size
${\kappa}^+$.
\end{lemma}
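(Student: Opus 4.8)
The plan is to first replace $R$ by a separated subset and then thin further until pairwise equivalence holds; the second thinning carries all the weight. By the usual $\Delta$-system argument together with $\ka^{<\ka}=\ka$ (there are only $\ka$ isomorphism types of a structure $\<X_p,\preceq_p,\iii_p\>$ with $|X_p|<\ka$, and only $\ka$ ways of placing the root), I first obtain a separated $Z\in\br R;\ka^+;$ with root $\bar X$, indexed as $X_p=\bar X\cup\{s^p_i:i<\delta\}$ with $\delta<\ka$ fixed and $h_{p,q}(s^p_i)=s^q_i$ for the adequate bijection. Since $h_{p,q}$ fixes $\bar X$ pointwise, a subset $Z'\subseteq Z$ is pairwise equivalent exactly when, for every $i<\delta$, the value $\II(\pi(s^p_i))$ does not depend on $p\in Z'$. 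Moreover clause~(2) of separatedness forbids two distinct conditions from carrying non-root points on a common level, so $p\mapsto\pi(s^p_i)$ is injective and $A_i=\{\pi(s^p_i):p\in Z\}\in\br\eta;\ka^+;$.

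Everything now reduces to two facts about the function $\alpha\mapsto\II(\alpha)$ determined by $\bar X$; call $\II(\alpha)$ a \emph{singleton value} if $\II(\alpha)=\{\alpha\}$ and an \emph{interval value} otherwise, and set $S=\{\alpha<\eta:\II(\alpha)=\{\alpha\}\}$. The two facts are: (A) the set $\mc V$ of interval values has $|\mc V|\le\ka$; and (B) every $A\in\br\eta;\ka^+;$ has a subset $A'\in\br A;\ka^+;$ on which $\II$ is constant and equal to an interval value. Granting these, I conclude as follows. By (B), $A_i\cap S$ cannot have size $\ka^+$ (else (B) would assign an interval value to $\ka^+$ many points of $S$), so $B_i=\{p\in Z:\pi(s^p_i)\in S\}$ has size $\le\ka$; since $\delta<\ka$, deleting $\bigcup_{i<\delta}B_i$ leaves $Z^\ast\in\br Z;\ka^+;$ on which every $\II(\pi(s^p_i))$ is an interval value. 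The type map $p\mapsto\<\II(\pi(s^p_i)):i<\delta\>$ then takes values in $\mc V^{\delta}$, of size $\le\ka^{<\ka}=\ka$ by (A), so a last pigeonhole produces $Z'\in\br Z^\ast;\ka^+;$ of constant type; this $Z'$ is pairwise equivalent.

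Both (A) and (B) come from a single descent through the tree $\mathbb I_\eta$. For (B) put $J_0=[0,\eta)$, $A^{(0)}=A$; given $J_n\in\ical_n$ with $A^{(n)}\in\br A;\ka^+;$ and $A^{(n)}\subseteq J_n$, either $\cf(J_n^+)=\ka^+$ and $\ka^+$ many $\alpha\in A^{(n)}$ satisfy $\alpha\ge\gamma(J_n)$ — in which case $J_n$ realises the defining clause of $\II$ at the least possible level (no $J_k$ with $k<n$ qualified for these $\alpha$, since we descended only while $\cf(J_k^+)\ne\ka^+$ or while keeping the retained points below the threshold $\gamma(J_k)$), so $\II(\alpha)=J_n$ and we stop — or we recurse. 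In the recursive case the $\ka^+$ retained points lie in $[J_n^-,\gamma(J_n))$ when $\cf(J_n^+)=\ka^+$, and in all of $J_n$ when $\cf(J_n^+)\ne\ka^+$; either region is covered by at most $\ka$ intervals of $\ical_{n+1}$ (in the first case because $\gamma(J_n)=\varepsilon^{J_n}_{\xi(J_n)+\ka}$ with $\xi(J_n)<\ka^+$, as $\bar X$ meets $J_n$ in $<\ka$ points while $\cf(J_n^+)=\ka^+>\ka$; in the second because $\cf(J_n^+)\le\ka$), so a pigeonhole passes $\ka^+$ points to one subinterval $J_{n+1}$.

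The recursion terminates because every recursive step strictly lowers the top: $[J_n^-,\gamma(J_n))$ has supremum $\gamma(J_n)<J_n^+$; a non-qualifying limit $J_n^+$ has every proper subinterval in $\ical_{n+1}$ ending strictly below $J_n^+$; and a non-qualifying successor $J_n^+=\be'+1$ splits into $[J_n^-,\be')$ and the single point $\{\be'\}$, which cannot hold two of the distinct retained points, forcing the $\ka^+$ points into $[J_n^-,\be')$ with top $\be'<J_n^+$. As a strictly decreasing sequence of ordinals is finite, the descent halts at a qualifying interval, proving (B). The same bookkeeping gives (A): the intervals occurring as values of $\II$ are nodes of the tree of intervals reachable by descending through non-resolving parts, a tree with branching $\le\ka$ and, by the strict drop of the top along every internal edge, only finite branches; hence it has at most $\sum_{n<\omega}\ka^{n}=\ka$ nodes. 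The main obstacle is exactly this termination-and-counting bookkeeping — confirming the strict decrease of $J_n^+$ in each recursive case and that each threshold zone meets only $\le\ka$ next-level intervals; once that is in place, (A), (B), and the pigeonhole assembly are routine.
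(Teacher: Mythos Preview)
The paper does not actually prove this lemma; it is quoted in the Appendix as \cite[Lemma~2.5]{M} and attributed there without any argument being reproduced. So there is no in-paper proof to compare against directly.

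Your argument is correct and follows what one expects the proof in \cite{M} to look like: first pass to a separated family by the $\Delta$-system and isomorphism-type thinning, then stabilise the values $\II(\pi(s^p_i))$ using the two structural facts (A) and (B) about the function $\alpha\mapsto\II(\alpha)$. The descent through $\mathbb I_\eta$ for (B), with the termination coming from the strict drop of $J_n^+$, is exactly the right mechanism, and your handling of the three recursive cases is accurate.

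One small wrinkle in your justification of (A): the phrase ``only finite branches'' is not literally true, since when $J_n^+=\beta'+1$ the singleton child $\{\beta'\}$ has $\{\beta'\}^+=\beta'+1=J_n^+$, so that edge does not lower the top (and $\{\beta'\}$ is its own child in $\mathbb I_\eta$). This does not damage the conclusion, though: the reachable intervals form a subset of $\bigcup_n\ical_n$ with at most $\kappa$ reachable members of each $\ical_n$ (by your branching bound and induction on $n$), giving at most $\omega\cdot\kappa=\kappa$ nodes in total --- the branch-length claim is simply unnecessary for the count. With that caveat, the removal of the $\le\kappa$ conditions carrying a singleton value and the final pigeonhole on the type vector in $\mc V^\delta$ go through cleanly.
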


\begin{lemma}[{\cite[Lemma 2.6]{M}}]
A pairwise equivalent set $Z\subseteq P_{\eta}$ of size
${\kappa}^+$ is linked.
\end{lemma}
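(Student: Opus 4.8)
The plan is to prove linkedness by showing that any two conditions $p,q\in Z$ admit a common extension $r\in P_\eta$, exploiting the adequate isomorphism $h=h_{p,q}:X_p\to X_q$ (which is the identity on the root $\bar X$) together with the pairwise-equivalence hypothesis $I({\pi}(s))=I({\pi}(h(s)))$. I would build $r=\<X_r,\preceq_r,\iii_r\>$ on the set $X_r=X_p\cup X_q\cup V$, where $V$ is a finite set of new points inserted at levels below the root-determined thresholds $\gamma(I)$. The order $\preceq_r$ would be obtained from $\preceq_p\cup\preceq_q$ by routing all new comparabilities through $\bar X$ and $V$: a point of $X_p\setm\bar X$ is declared below a point of $X_q\setm\bar X$ exactly when transitivity through $\bar X$ forces it, and the points of $V$ serve as interpolants. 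For a pair $s\in X_p$, $t\in X_q$ that becomes compatible but incomparable, I would set $\iii_r\{s,t\}$ to be the largest common lower bound found in $\bar X\cup V$; the argument behind Lemma \ref{lm:kerneldown} shows such infima are pinned inside the root.

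The two nontrivial conditions to preserve are (P4) and (P5), and it is their simultaneous satisfaction for the enlarged structure that carries the content of the lemma. For (P4), consider $s\in X_p\setm\bar X$ and $t\in X_q\setm\bar X$ meeting at $v=\iii_r\{s,t\}$; writing $t=h(s')$ and using that $h$ is the identity on $\bar X$, one sees that the common lower bounds of $s$ and $t$ inside $\bar X$ are exactly those of $s$ and $s'$ inside $p$, so $v$ coincides with $\iii_p\{s,s'\}$. Applying (P4) for $p$ gives ${\pi}(v)\in o({\pi}(s))\cap o({\pi}(s'))$. Pairwise equivalence $I({\pi}(s'))=I({\pi}(t))$ places ${\pi}(s')$ and ${\pi}(t)$ in the same interval at the same level and both past $\gamma(I)$, and a direct comparison of the defining sets $o_m(\cdot)=\EE(I(\cdot,m))\cap(\cdot)$ shows that $o({\pi}(s'))$ and $o({\pi}(t))$ agree below $\gamma(I)$; since ${\pi}(v)$ lies below that threshold, ${\pi}(v)\in o({\pi}(t))$ as well, yielding (P4) for the pair $\{s,t\}$.

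The main obstacle is (P5): a new comparability $s\prec_r t$ (say with $s\in X_p$, $t\in X_q$, mediated by some $u\in\bar X$) for which $\Lambda=J({\pi}(s),{\pi}(t))$ isolates $s$ from $t$ demands an interpolant at level $\Lambda^+$ on the chain from $s$ to $t$, and no such point need already be present in $X_p\cup X_q$. This is exactly why $V$ is introduced. The key is that the definition $\gamma(I)={\varepsilon}^I_{\xi(I)+\kappa}$ reserves $\kappa$ many levels of each large interval $I$ lying beyond the footprint $\xi(I)$ of the root, so there is always room to insert the finitely many required witnesses at the correct levels $\Lambda^+$ without clashing with existing points; pairwise equivalence further guarantees that these levels are computed identically from the $p$-side and the $q$-side, so a single interpolant serves both. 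I would then check that each inserted point of $V$ itself obeys (P4) — its infima with old points again landing in the common initial segment of the relevant orbits — and finally dispatch the routine clauses (P1)--(P3). The delicate part is to carry out the placement of $V$ and the definition of $\iii_r$ in tandem, so that enlarging $X_p\cup X_q$ to satisfy (P5) does not destroy (P4); the $+\kappa$ slack in $\gamma(I)$ and the identity $I({\pi}(s))=I({\pi}(h(s)))$ are precisely what make this possible.
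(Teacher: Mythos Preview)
Your sketch has the right shape but misidentifies the role of the new set $V$, and this leads to a genuine gap in the (P4) verification.

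First a minor point: conditions in $P_\eta$ have size $<\kappa$, not finite, so $V$ need not be finite. More importantly, in the construction from \cite{M} (summarized in the Appendix here) the set $V=X_r\setm(X_p\cup X_q)$ is not an ad hoc collection of interpolants for (P5); it is a full adequate copy of $X_p\setm\bar X$ (equivalently of $X_q\setm\bar X$) via bijections $g_1,g_2$ with $g_2=h_{p,q}\circ g_1$, placed at levels $\beta_\xi\in D(I(\alpha_\xi))\subseteq E(I(\alpha_\xi))\cap\gamma(I(\alpha_\xi))$. The primary purpose of this copy is to supply infima for cross-pairs, not merely (P5) witnesses.

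The gap shows up already in the simplest case: take $s\in X_p\setm\bar X$ and $t=h(s)\in X_q\setm\bar X$, so in your notation $s'=s$. These are distinct, typically incomparable in $r$, and your recipe would set $\iii_r\{s,t\}$ to be the $\preceq_p$-largest $w\in\bar X$ with $w\prec_p s$. But nothing forces $\pi(w)\in o(\pi(s))$: (P4) in $p$ only constrains infima of \emph{incomparable} pairs, and $w\prec_p s$ is a comparability. So (P4)(a) can fail for $\{s,t\}$ under your definition. More generally, your claim that ``the common lower bounds of $s$ and $t$ inside $\bar X$ are exactly those of $s$ and $s'$ inside $p$'' is false: the right-hand side may contain elements of $X_p\setm\bar X$, so $\iii_p\{s,s'\}$ need not lie in $\bar X$ and need not equal your $v$.

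The construction in \cite{M} avoids this by letting the copy of $s$ in $V$, namely $g_1^{-1}(s)$, sit below both $s$ and $h(s)$; its level $\beta_\xi$ is chosen in $E(I(\pi(s)))\cap\gamma(I(\pi(s)))$, which lies in $o(\pi(s))$ by construction and in $o(\pi(h(s)))$ by pairwise equivalence. Once $V$ is built as this isomorphic copy, the (P5) witnesses also come for free from the copy structure, rather than being inserted separately. Your orbit-comparison idea for translating $o(\pi(s'))$ to $o(\pi(t))$ below $\gamma(I)$ is correct and is indeed the point of pairwise equivalence, but it has to be applied to infima landing in $V$, not in $\bar X$.
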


To get Proposition 23 we explain that the proof of {\cite[Lemma
2.6]{M} actually gives the following statement:

\vspace{2mm} {\em If $Z\subseteq P_{\eta}$ is a pairwise
equivalent set of size ${\kappa}^+$, then there is an ordinal
${\gamma}<{\eta}$ such that every  $p,q\in Z$ have a common
extension  $r\in P_{\eta}$ satisfying (R1)--(R2).}

\vspace{2mm} As above, we denote by $\bar X$ the root of
$\{X_p:p\in Z \}$. Assume that $p,q\in Z$ with $p\neq q$. First
observe that the ordering $\prec_r$ is defined in \cite[Definition
2.4]{M}. For this, adequate bijections $g_1 : X_r\setm (X_p\cup
X_q)\to X_p\setm {\bar X}$ and $g_2 : X_r\setm (X_p\cup X_q)\to
X_q\setm {\bar X}$ are considered in such a way that
$g_2=h_{p,q}\circ g_1$. Then since $g_2=h_{p,q}\circ g_1$,
\cite[Definition 2.4]{M}(b) and (c) imply (R2)(a) and
\cite[Definition 2.4]{M}(d) and (f) imply (R2)(b). Also, (R2)(c)
follows directly from \cite[Definition 2.4]{M}(d) and (f), and
(R2)(d) is just \cite[Definition 2.4]{M}(e) and (g).
So, we have
verified (R2).

To check (R1), i.e. to get the right ${\gamma}$ we need a bit more
work. Let
\begin{equation}
\mc J=\{I(\pi(s)):s\in X_p\}
\end{equation}
where $p\in Z$. Since $Z$ is pairwise equivalent, $\mc J$ does not
depend on the choice of $p\in Z$. For every $I\in {\mathbb
I}_{\eta}$ with $\cf(I^+)={\kappa}^+$ we can choose a set $D(I)\in
[E(I)\cap \gamma(I)]^{\kappa}$ unbounded in $\gamma(I)$.  We claim
that
\begin{equation}
{\gamma}=\sup(\bigcup\{D(I): I\in \mc J\})+1
\end{equation}
works.

First observe that ${\gamma}<{\eta}$, because
$\cf({\eta})={\kappa}^+$,  $|\mc J| < {\kappa}$ and $|D(I)| =
{\kappa}$ for any $I\in \mc J$.

Now assume that $p,q\in Z$ with $p\neq q$. Write $L_p = \pi[X_p]$,
$L_q = \pi[X_q]$ and ${\bar L} = \pi[{\bar X}]$. Let
$\{\alpha_{\xi}:\xi < \delta \}$ and $\{\alpha'_{\xi}:\xi < \delta
\}$  be the strictly increasing enumerations of $L_p\setminus \bar
L$ and $L_q\setminus \bar L$ respectively. In the proof of
\cite[Lemma 2.6]{M}, for each $\xi < \delta$ an element
$\beta_{\xi}\in D(I(\alpha_{\xi})) = D(I(\alpha'_{\xi}))$ was
chosen, and then a condition $r\leq_{\eta} p,q$ was constructed in
such a way that $X_r = X_p\cup X_q\cup Y$ where $Y\cap (X_p\cup
X_q) = \emptyset$ and $\pi[Y] = \{\beta_{\xi} : \xi <\delta \}$.
Then since $\{\beta_{\xi}: \xi < \delta \}\subseteq
\bigcup\{D(I):I\in \mc J \}$, we infer that

\begin{equation}
 \sup {\pi} [X_r\setm (X_p\cup X_q)] = \sup {\pi}[Y]<{\gamma},
\end{equation}
which was to be proved.

\end{document}